\numberwithin{equation}{section}
\newtheorem{theorem}{Theorem}[section]
\newtheorem{lemma}[theorem]{Lemma}
\newtheorem{corollary}[theorem]{Corollary}
\newtheorem{proposition}[theorem]{Proposition}
\newtheorem{definition}[theorem]{Definition}
\newtheorem{remark}[theorem]{Remark}
\newtheorem{example}[theorem]{Example}
\def\C{\mathcal{C}}
\def\I{\mathcal{I}}
\def\T{\mathcal{T}}
\def\F{\mathcal{F}}
\def\et{\mathbb{E}}
\def\X{\mathcal{X}}
\def\y{\mathsf{Y}}
\def\x{\mathsf{X}}
\def\t{\mathsf{T}}
\def\s{\mathsf{S}}
\def\m{\mathbf{M}}
\def\n{\mathbf{N}}
\def\uu{\mathbf{U}}
\def\vv{\mathbf{V}}
\def\ee{\mathbf{E}}
\def\c{\mathsf{C}}
\def\alg{A}
\def\salg{B}
\def\pp{\mathbf{P}}
\def\qq{\mathbf{Q}}
\def\ch{\s}
\providecommand{\Cone}{\mathop{\rm Cone}\nolimits}%
\providecommand{\Cocone}{\mathop{\rm Cocone}\nolimits}%
\providecommand{\add}{\mathop{\rm add}\nolimits}%
\providecommand{\Gen}{\mathop{\rm Gen}\nolimits}%
\providecommand{\End}{\mathop{\rm End}\nolimits}%
\providecommand{\Ext}{\mathop{\rm Ext}\nolimits}%
\providecommand{\Hom}{\mathop{\rm Hom}\nolimits}%
\providecommand{\thick}{\mathop{\rm thick}\nolimits}%
\providecommand{\per}{\mathop{\rm per}\nolimits}%
\renewcommand{\mod}{\mathop{\rm mod}\nolimits}%
\providecommand{\Ab}{\mathop{\rm Ab}\nolimits}%
\providecommand{\op}{\mathop{\rm op}\nolimits}%
\newcommand{\heart}{\ensuremath\heartsuit}
\newcommand{\butt}{\rotatebox[origin=c]{180}{\heart}}
\title{Weak cotorsion, $\tau$-tilting and two-term categories}
\dedicatory{Dedicated to our teacher and friend, Idun Reiten, on the occasion of her 80th birthday}
\author{Aslak Bakke Buan}
\address{Institutt for matematiske fag, NTNU,
 N-7491 Trondheim, Norway.}
\email{aslak.buan@ntnu.no}
\author{Yu Zhou}
\address{Yau Mathematical Sciences Center, Tsinghua University, 100084 Beijing, China}
\email{yuzhoumath@gmail.com}
\begin{document}
\begin{abstract}
Motivated by its links to $\tau$-tilting theory, we introduce a generalization of cotorsion pairs in module categories. Such pairs are also linked to co-t-structures in
corresponding triangulated categories, and to cotorsion pairs in certain extension closed
(and hence extriangulated)
subcategories, which we call two-term categories.
\end{abstract}

\thanks{This work was supported by grant number FRINAT 301375 from the Norwegian Research Council and by grant number 12031007 from National Natural Science Foundation of China. The first named author would like to thank the Isaac Newton Institute for Mathematical Sciences, Cambridge, for support and hospitality during the programme Cluster Algebras and Representation Theory where work on this paper was undertaken. This work was supported by EPSRC grant no EP/K032208/1.}

\maketitle	
	
\section*{Introduction and main results}

We introduce the notion of {\em weak cotorsion pairs} in module categories, and show that 
support $\tau$-tilting modules, as defined by Adachi-Iyama-Reiten \cite{AIR},  
give rise to such pairs. In particular, this allows us to generalize a theorem of \cite{BBOS}, 
from a tilting to a support $\tau$-tilting setting, which can be considered as a characterization of the (left) weak
cotorsion pairs which come from $\tau$-tilting.

Our main result concerns modules over a finite dimensional algebra $A$,
but we will need to study cotorsion in two-term categories,
as was recently introduced and studied by Paukzstello and Zvonareva \cite{PZ},
in order to prove our main results. 
The motivating, and for us also the most central, example of a two-term category,
is the category whose objects are maps between finite-dimensional projective $A$-modules,
and whose morphisms are chain maps up to homotopy.
Note, that this is an extension closed subcategory of a triangulated category, the homotopy category of complexes of projectives. Hence it is in particular equipped with a natural structure of an extriangulated category \cite{NP}.

In \cite{PZ}, they prove a version of the HRS-tilting theorem \cite{HRS} in the setting of
complete cotorsion pairs in two-term categories. Moreover, they also prove a correspondence between 
such complete cotorsion pairs and  functorially finite torsion pairs in a corresponding module category.
Both these results are crucial for proving our main theorems. 
We provide independent proofs of these results, adapted to our setting.
In addition, we give a version of the Brenner-Butler theorem \cite{BB} for two-term categories, which will also be needed for our main results.

In order to state precisely our main results, we need some notation.
Let $\alg$ be a finite dimensional algebra,
and let $\mod \alg$ be the category of finite dimensional left $\alg$-modules. 
Let $\Gen T$ denote the full subcategory of modules which are generated by direct sums of
copies of a module $T$ in $\mod \alg$. For a full subcategory $\X$, let $\X^{\perp} = \{Y \mid \Hom(\X,Y) = 0\}$,
and for a module $X$, let $X^{\perp}  = \{X\}^{\perp}$. 

Now recall the following important theorem from \cite{AIR}.

\begin{theorem}\label{adachi}
The map $T \mapsto (\Gen T, T^{\perp})$ gives a bijection between 
support $\tau$-tilting modules and functorially finite torsion pairs in $\mod \alg$.
\end{theorem}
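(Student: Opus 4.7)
The plan is to construct the inverse map explicitly and verify bijectivity directly. For a support $\tau$-tilting $T$, the main input to showing $(\Gen T, T^\perp)$ is a functorially finite torsion pair is Auslander-Reiten duality, which converts the $\tau$-rigidity condition $\Hom(T, \tau T) = 0$ into $\Ext^1(T, \Gen T) = 0$. Consequently $\Gen T$ is closed under extensions (and trivially under quotients and direct sums), hence a torsion class. Contravariant finiteness follows from the evaluation map $T \otimes_\k \Hom(T, M) \to M$ whose image is a right $\Gen T$-approximation of $M$; covariant finiteness follows dually or via the torsion decomposition of $M$.

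For the inverse, given a functorially finite torsion pair $(\T, \F)$, I would define $T$ as the direct sum of representatives of isoclasses of indecomposable $\Ext$-projectives in $\T$, i.e., indecomposable $P \in \T$ with $\Ext^1(P, \T) = 0$. Covariant finiteness is essential here: for each indecomposable projective $\alg$-module $P$, a minimal left $\T$-approximation $P \to T_P$ yields, via a Wakamatsu-type argument (its cokernel lies in $\T^\perp$), an $\Ext$-projective contributing a summand to $T$. Another application of AR duality gives $\tau$-rigidity of the resulting $T$. To show the two maps are mutually inverse, one checks that for $T$ support $\tau$-tilting the $\Ext$-projectives in $\Gen T$ coincide with $\add T$, and conversely that every object of a functorially finite torsion class $\T$ admits an epimorphism from a sum of $\Ext$-projectives, so $\Gen T = \T$.

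The main obstacle is surjectivity: verifying that the $\Ext$-projective construction produces a module with the correct number of indecomposable summands to qualify as support $\tau$-tilting (rather than merely $\tau$-rigid). This requires a Bongartz-style completion argument, carefully tracking when the minimal left $\T$-approximations of distinct indecomposable projective summands of $\alg$ give isomorphic, distinct, or zero indecomposable $\Ext$-projective summands, and matching this count against the rank of $\alg$ modulo the idempotent determined by the projective simples lying in $\F$. Establishing this exact numerical match, together with the uniqueness up to $\add$-closure, is the delicate part that requires the full strength of functorial finiteness rather than just the existence of approximations.
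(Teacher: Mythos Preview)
The paper does not prove this statement at all: Theorem~\ref{adachi} is stated in the introduction with the phrase ``recall the following important theorem from \cite{AIR}'' and is used thereafter as a black box. There is therefore no proof in the paper to compare your proposal against.

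Your outline is essentially the strategy of the original Adachi--Iyama--Reiten argument: AR duality to get $\Ext^1(T,\Gen T)=0$ and hence that $\Gen T$ is a torsion class, the trace map for contravariant finiteness, and recovery of $T$ from a functorially finite torsion class $\T$ as the $\Ext$-projectives of $\T$, with a Bongartz-type completion to establish the correct number of summands. That is a correct plan and matches \cite{AIR}. One small point: you say ``covariant finiteness follows dually or via the torsion decomposition of $M$'', but the torsion decomposition already presupposes that $\Gen T$ is a torsion class, not that it is covariantly finite; what you actually need is that the torsion radical $t(M)\hookrightarrow M$ followed by the canonical surjection $M\to M/t(M)$ gives a left $\F$-approximation, and then invoke the general fact that a torsion class with functorially finite torsion-free class is itself functorially finite (or use Smal\o's theorem). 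Otherwise the sketch is sound, and the ``main obstacle'' you identify is indeed where the real work in \cite{AIR} lies.
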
	

Our aim is to give a cotorsion version of this theorem, for this  
we introduce the notion of {\em left weak cotorsion pair}:

\begin{definition}\label{def:lw-cot}
	A pair $(\C,\T)$ of subcategories of $\mod \alg$ is called a left weak cotorsion pair (or lw-cotorsion pair for short) if
	\begin{enumerate}
		\item $\Ext^1(\C,\T)=0$;
		\item for any $M\in\mod \alg$, there are exact sequences
		$$0\to Y_M\to X_M\xrightarrow{f_M} M\to 0$$
		and
		$$M\xrightarrow{g^M} Y^M\to X^M\to 0$$
		with $X_M,X^M\in\C$, $Y_M,Y^M\in\T$, $f_M$ a right $\C$-approximation of $M$, and $g^M$ a left $\T$-approximation of $M$. 
	\end{enumerate}
\end{definition}

Let $^{\bot_1}{\X} = \{Y \mid  \Ext^1(Y,\X) = 0\}$ for a subcategory  $\X$.
Our first main theorem is the following.
\begin{theorem}
Let $T$ be a support $\tau$-tilting $\alg$-module. 
Then $(^{\bot_1}{\Gen T}, \Gen T)$ is a lw-cotorsion pair
in $\mod \alg$.
\end{theorem}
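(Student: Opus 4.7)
My plan is to reduce this to a cotorsion pair in the two-term category associated to $\alg$. Condition (1) is immediate from the definition of $^{\bot_1}(\Gen T)$, so the substance is in producing the approximation sequences of (2).

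For (2), I first associate to $T$ its 2-term silting complex via the Adachi-Iyama-Reiten bijection. Concretely, if $(T, P)$ is the support $\tau$-tilting pair and $Q_1 \xrightarrow{d} Q_0 \to T \to 0$ is the minimal projective presentation of $T$, I take $S := (Q_1 \oplus P \xrightarrow{(d,\, 0)} Q_0)$, viewed as an object of the two-term category. By the extension of the results of \cite{PZ} developed earlier in the paper, $\add S$ is the right-hand class of a complete cotorsion pair $(\Y, \add S)$ in this extriangulated category, and under the Brenner-Butler-type correspondence (also established earlier), $H^0(\add S) = \Gen T$ and $H^0(\Y) = {}^{\bot_1}(\Gen T)$.

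For $M \in \mod \alg$, I lift $M$ to its minimal projective presentation $\hat M = (R_1 \to R_0)$ in the two-term category, so that $H^0(\hat M) = M$. Applying completeness of the cotorsion pair $(\Y, \add S)$ to $\hat M$ produces conflations $\hat Y_M \to \hat X_M \to \hat M$ and $\hat M \to \hat Y^M \to \hat X^M$ with $\hat X_M, \hat X^M \in \Y$ and $\hat Y_M, \hat Y^M \in \add S$, in which the middle maps are a right $\Y$- and a left $\add S$-approximation respectively. Applying $H^0$ together with the correspondence above, I recover the desired exact sequences $0 \to Y_M \to X_M \to M \to 0$ and $M \to Y^M \to X^M \to 0$ in $\mod \alg$ with the correct class memberships and with $f_M, g^M$ being the required right $^{\bot_1}(\Gen T)$- and left $\Gen T$-approximations.

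The main obstacle will be the careful tracking of cohomology and of approximation properties under $H^0$: one must verify that the long exact cohomology sequence arising from each 2-term conflation truncates to the requisite short (or one-sided) exact sequence in $\mod \alg$, and that surjectivity of $\Hom$-maps in the two-term category transfers cleanly to the left/right approximation properties demanded by Definition \ref{def:lw-cot}. This bridge is precisely what the Brenner-Butler-type theorem for two-term categories provides; once it is in hand, the passage from the 2-term cotorsion pair to the lw-cotorsion pair $({}^{\bot_1}\Gen T, \Gen T)$ in $\mod \alg$ is mainly an assembly of pieces established earlier in the paper.
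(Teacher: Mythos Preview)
Your overall strategy---lift $M$ to its projective presentation in the two-term category $\butt(\alg)$, invoke the complete cotorsion pair there, and push the resulting conflations down via $H^0$---is exactly the approach taken in the paper (it is the content of Proposition~\ref{prop:left-weak}). However, there is a genuine error in your identification of the cotorsion pair.

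You write that ``$\add S$ is the right-hand class of a complete cotorsion pair $(\Y,\add S)$''. This is not correct: by Corollary~\ref{cor:sil-cotor}, the complete cotorsion pair associated to the silting object $S$ is $(\uu(S),\vv(S))$ with $\uu(S)=(\add S[-1]\ast\add S)\cap\butt(\alg)$ and $\vv(S)=(\add S\ast\add S[1])\cap\butt(\alg)$, and $\add S$ is their \emph{intersection}, not either class. Consequently $H^0(\add S)=\add T$, not $\Gen T$; the equality $H^0(\vv(S))=\Gen T$ is Lemma~\ref{lem:tor-cotor}. If you actually run the argument with the pair $(\Y,\add S)$ you will not have the conflations you claim, because this is not a complete cotorsion pair in $\butt(\alg)$ in general (for instance $\alg[1]\in\vv(S)$ by Lemma~\ref{lem:projcon}, but typically $\alg[1]\notin\add S$).

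A second, smaller point: the identification $H^0(\uu(S))={}^{\bot_1}\Gen T$ is not a consequence of the Brenner-Butler theorem for two-term categories (Theorem~\ref{thm:BB} compares $\butt(\alg)$ with $\butt(\salg)$ for $\salg=\End S$, which is not what is needed here). In the paper one first proves that $(H^0(\uu),H^0(\vv))$ is an lw-cotorsion pair (Proposition~\ref{prop:left-weak}) and then invokes Lemma~\ref{c-determined}, which says that in any lw-cotorsion pair $(\C,\T)$ one automatically has $\C={}^{\bot_1}\T$. With these two corrections in place, your outline becomes the paper's proof.
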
	

Note that there are in general lw-cotorsion pairs $(\C, \T)$, such that $\T$ is {\em not}
closed under factors. Indeed, for any algebra $\alg$, it is easily verified that $(\mod \alg, \I(\alg))$,
where $\I(\alg)$ denotes the category of all injective objects in $\mod \alg$, is
a lw-cotorsion pair (in fact it is a cotorsion pair). And for any non-hereditary algebra $\alg$,
we have that $\I(\alg)$ is not closed under factors.

A triple of subcategories $(\C, \T, \F)$ is called a lw-cotorsion-torsion triple if
$(\C, \T)$ is a lw-cotorsion pair, and $(\T, \F)$ is a torsion pair. We prove the following strengthening of the above theorem, where the tilting part is \cite[Theorem 2.29]{BBOS}. Here $\add T$ denotes the additive closure of a module $T$.

\begin{theorem}
The map $T \mapsto (^{\bot_1}{\Gen T}, \Gen T, T^{\bot})$ is a bijection between basic support $\tau$-tilting modules and
lw-cotorsion-torsion triples, with inverse $(\C, \T, \F) \mapsto T$, where $\add T = \C \cap \T$.

The map specializes to a bijection between tilting modules and cotorsion-torsion triples.
\end{theorem}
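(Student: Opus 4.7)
The forward map is well-defined by combining the previous theorem (giving that $({}^{\bot_{1}}\Gen T, \Gen T)$ is an lw-cotorsion pair whenever $T$ is support $\tau$-tilting) with Theorem \ref{adachi} (giving that $(\Gen T, T^{\perp})$ is a functorially finite torsion pair). The bijection will rest on the identity
\[
\add T \;=\; {}^{\bot_{1}}\Gen T \;\cap\; \Gen T,
\]
valid for any basic support $\tau$-tilting $T$. The inclusion $\supseteq$ is immediate from $\tau$-rigidity, which gives $\Ext^{1}(T,\Gen T)=0$. The reverse inclusion says that the Ext-projective objects in $\Gen T$ are exhausted by $\add T$; this is the standard AIR description of Ext-projective generators of a functorially finite torsion class. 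Granting this identity, the inverse $(\C,\T,\F)\mapsto T$ recovers $T$ from the image triple up to isomorphism, so the forward map is injective on basic modules.

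For surjectivity, start with an lw-cotorsion-torsion triple $(\C,\T,\F)$. The left $\T$-approximations $g^{M}\colon M\to Y^{M}$ of Definition \ref{def:lw-cot} show that $\T$ is covariantly finite, while the torsion radical $tM \hookrightarrow M$ attached to the torsion pair $(\T,\F)$ provides right $\T$-approximations, so $\T$ is also contravariantly finite. By Theorem \ref{adachi} there is a basic support $\tau$-tilting module $T$ with $\Gen T = \T$; the identification $\F = T^{\perp}$ follows from $\F = \T^{\perp}$ together with the fact that each $T'\in\Gen T$ admits an epimorphism $T^{n}\twoheadrightarrow T'$, yielding $\Hom(T',X) \hookrightarrow \Hom(T^{n},X)$ for any $X$.

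It remains to show $\C = {}^{\bot_{1}}\Gen T$, and then the identity $\add T = \C \cap \T$ will follow by intersecting with $\T$. The inclusion $\C \subseteq {}^{\bot_{1}}\T$ is the defining vanishing of an lw-cotorsion pair. Conversely, given $X \in {}^{\bot_{1}}\T$, apply Definition \ref{def:lw-cot} to obtain a right $\C$-approximation sequence $0 \to Y_{X} \to X_{X} \to X \to 0$ with $Y_{X}\in\T$; then $\Ext^{1}(X,Y_{X})=0$ forces the sequence to split, so $X$ is a direct summand of $X_{X}\in\C$ and hence $X\in\C$ (using that $\C$ is closed under summands). Combined with the first paragraph, $\C\cap\T = {}^{\bot_{1}}\Gen T \cap \Gen T = \add T$, so the two maps are mutually inverse.

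For the specialization, the lw-cotorsion pair $({}^{\bot_{1}}\Gen T, \Gen T)$ is a (complete) cotorsion pair precisely when $\pd T \le 1$, which is the condition distinguishing tilting modules among basic support $\tau$-tilting modules; hence the bijection restricts as claimed. The most delicate step above is the inclusion ${}^{\bot_{1}}\Gen T \subseteq \C$ in the third paragraph: this is where the right $\C$-approximation clause of Definition \ref{def:lw-cot} does essential work, and it is precisely what rigidifies the triple enough to recover $T$ from $\C \cap \T$.
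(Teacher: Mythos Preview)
Your argument for the main bijection is correct and essentially parallels the paper's proof; you even make explicit why $\T$ is functorially finite (covariant finiteness from the left $\T$-approximations in Definition~\ref{def:lw-cot}, contravariant finiteness from the torsion radical), which the paper leaves implicit, and your derivation of $\C = {}^{\bot_1}\T$ is precisely Lemma~\ref{c-determined}.

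The specialization step, however, contains a genuine error. The condition $\pd T \le 1$ does \emph{not} distinguish tilting modules among support $\tau$-tilting modules: take $A = k \times k$ and $T = S_1$; then $T$ is support $\tau$-tilting with $\pd T = 0$, yet $T$ is not tilting since it is not faithful. Correspondingly, $({}^{\bot_1}\Gen T, \Gen T) = (\mod A, \add S_1)$ is not a cotorsion pair, as $S_2$ admits no monomorphism into an object of $\add S_1$. The correct characterization, as the paper recalls from \cite[Proposition~2.2]{AIR}, is faithfulness: a support $\tau$-tilting module is tilting if and only if it is faithful, and this is exactly what is forced when $g^A \colon A \to Y^A$ can be taken injective with $Y^A \in \Gen T$. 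The paper handles the tilting-to-cotorsion direction by citing \cite[Theorem~2.29]{BBOS}; the converse follows because injectivity of $g^A$ yields an embedding $A \hookrightarrow Y^A \in \Gen T$, whence $\ann T = 0$ and $T$ is faithful, hence tilting.
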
	

Note that a support $\tau$-tilting module $T$ is {\em tilting} if and only if it is faithful (i.e. there is a monomorphism $\alg \to T^t$ for some positive integer $t$ (see  \cite[Proposition 2.2]{AIR}), and that a lw-cotorsion pair is a {\em (complete) cotorsion pair} if the map $g^M$ in Definition \ref{def:lw-cot} can always be chosen to be injective. 

%\begin{corollary}
%	The map $T \mapsto (^{\bot_1}{\Gen T}, \Gen T)$
%	is a bijection between support $\tau$-tilting modules and functorially finite lw-cotorsion %pairs, with inverse $(\C, \T) \mapsto T$, where $\add T = \C \cap \T$.
	
%	The map specializes to a bijection between tilting modules and functorially finite cotorsion pairs.
%\end{corollary}
As a consequence, we prove the following, which also generalizes similar results of \cite{BBOS}
for tilting objects.

\begin{corollary}
For a lw-cotorsion-torsion triple $(\C, \T, \F)$, we have $$\C/(\C\cap\T)\simeq\F$$.
\end{corollary}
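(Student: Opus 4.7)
The plan is to exhibit an explicit functor $\bar G \colon \C/(\C\cap\T) \to \F$ induced by the torsion radical of $(\T,\F)$, and verify it is an equivalence by checking that it is essentially surjective, full and faithful. Let $t$ denote this torsion radical, so every $M \in \mod \alg$ sits in $0 \to tM \to M \to M/tM \to 0$ with $tM \in \T$ and $M/tM \in \F$. Define $G \colon \C \to \F$ on objects by $G(X) = X/tX$ and on morphisms by the induced map; functoriality holds because any morphism sends $tX_1$ into $tX_2$. For $T \in \C\cap\T$ we have $tT = T$ and hence $G(T) = 0$, so $G$ kills every morphism factoring through $\C\cap\T$ and descends to $\bar G$.

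Essential surjectivity and fullness are immediate from the lw-cotorsion axioms. Given $F \in \F$, condition (2) of Definition \ref{def:lw-cot} supplies $0 \to Y_F \to X_F \to F \to 0$ with $X_F \in \C$ and $Y_F \in \T$; the inclusion $Y_F \subseteq tX_F$ is clear, and the composition $tX_F \hookrightarrow X_F \twoheadrightarrow F$ vanishes because $\Hom(\T,\F) = 0$, giving the reverse inclusion. Hence $\bar G(X_F) = F$. For fullness, given $\phi \colon F_1 \to F_2$ in $\F$ and $X_1, X_2 \in \C$ with $\bar G(X_i) = F_i$, apply $\Hom(X_1,-)$ to $0 \to tX_2 \to X_2 \to F_2 \to 0$; since $\Ext^1(X_1, tX_2) = 0$ by condition (1), the composite $X_1 \twoheadrightarrow F_1 \xrightarrow{\phi} F_2$ lifts to some $f \colon X_1 \to X_2$, and $\bar G(f) = \phi$ by construction.

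The main work is faithfulness. If $f \colon X_1 \to X_2$ in $\C$ satisfies $\bar G(f) = 0$, then the image of $f$ lies in $tX_2$, so $f$ factors as $X_1 \xrightarrow{a} tX_2 \hookrightarrow X_2$. To push this factorization into $\C \cap \T$, take a left $\T$-approximation $g \colon X_1 \to Y^{X_1}$ with cokernel in $\C$ furnished by the second sequence of condition (2); since $tX_2 \in \T$, the map $a$ factors as $a = a' \circ g$ for some $a'$. Next, apply the first sequence of condition (2) to $Y^{X_1}$ to obtain $0 \to Y' \to X' \to Y^{X_1} \to 0$ with $X' \in \C$ a right $\C$-approximation and $Y' \in \T$. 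Since $\T$ is closed under extensions (being a torsion class), $X' \in \T$, and so $X' \in \C \cap \T$. Because $X_1 \in \C$, the map $g$ lifts through $X' \to Y^{X_1}$ to some $h \colon X_1 \to X'$, and chaining the resulting maps exhibits $f$ as factoring through $X' \in \C \cap \T$. The subtle step is precisely this assembly of the left $\T$-approximation of $X_1$ with the right $\C$-approximation of $Y^{X_1}$, which uses both halves of condition (2) together with the extension closure of $\T$ in order to convert a factorization through $\T$ into a factorization through $\C \cap \T$.
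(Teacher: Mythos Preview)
Your proof is correct, and it takes a genuinely different and much more elementary route than the paper. The paper's argument is indirect: it first invokes Theorem~\ref{thm:2} to realize $(\C,\T,\F)$ as the triple attached to a support $\tau$-tilting module, then passes to the two-term category $\butt(\alg)$, identifies $\C$, $\T$ with $H^0(\uu)$, $H^0(\vv)$ for the associated complete cotorsion pair $(\uu,\vv)$, and finally appeals to Corollary~\ref{cor:4} (whose proof rests on the Brenner-Butler machinery of Section~2 and the functor $\Psi$) together with Lemma~\ref{lem:ideal} to deduce $\C/(\C\cap\T)\simeq\uu/(\uu\cap\vv)\simeq H^0(\vv)^\perp=\F$. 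By contrast, you work entirely inside $\mod\alg$ with the torsion-free quotient functor and use nothing beyond the axioms of a lw-cotorsion-torsion triple; the key step, combining the left $\T$-approximation of $X_1$ with the right $\C$-approximation of $Y^{X_1}$ and using extension-closure of $\T$ to land in $\C\cap\T$, is a neat direct argument that avoids all of the silting and co-$t$-structure apparatus. What the paper's approach buys is that the equivalence is explicitly identified with the one coming from $\Psi$ and hence fits into the larger Brenner-Butler picture of Theorem~\ref{thm:BB}; what your approach buys is a self-contained proof that would apply verbatim in any abelian category admitting such triples, independent of the derived-category input.
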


 By the above, also the following holds.

\begin{corollary}
For a support $\tau$-tilting module $T$, we have $$^{\bot_1}{\Gen T}/\add T \simeq T^{\perp}.$$
\end{corollary}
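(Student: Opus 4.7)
The plan is to deduce this corollary as a direct specialization of the preceding one, using the bijection theorem to translate its categorical conclusion into the $\tau$-tilting language. Since ${}^{\bot_1}{\Gen T}$, $\Gen T$, $T^{\perp}$ and $\add T$ all depend only on $\add T$, I may assume without loss of generality that $T$ is basic.

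Given a basic support $\tau$-tilting module $T$, the bijection theorem (the third theorem in the excerpt) asserts two things: first, that the triple $(\C,\T,\F) := ({}^{\bot_1}{\Gen T},\,\Gen T,\, T^{\perp})$ is a lw-cotorsion-torsion triple in $\mod \alg$; and second, that the inverse map sends this triple back to $T$ via $\add T = \C \cap \T$. In particular, ${}^{\bot_1}{\Gen T}\cap \Gen T = \add T$. Feeding this specific triple into the previous corollary yields
$$\C/(\C\cap\T) \simeq \F,$$
which, after substituting the identifications above, is precisely the desired equivalence
$${}^{\bot_1}{\Gen T}/\add T \simeq T^{\perp}.$$

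The argument at this level is essentially a one-liner, so the real obstacle is upstream. The bulk of the work lies in proving the bijection theorem, which the introduction indicates will be built on the HRS-tilting and Brenner--Butler analogues for two-term categories adapted from \cite{PZ} and \cite{BB}, and in proving the preceding corollary $\C/(\C\cap\T)\simeq\F$ for a general lw-cotorsion-torsion triple. The latter should follow by using the torsion-pair decomposition $0 \to tX \to X \to X/tX \to 0$ associated to $(\T,\F)$ to define a functor $\C \to \F$, and then exploiting the two approximation sequences of Definition \ref{def:lw-cot} to verify that this functor is dense and full, with kernel ideal precisely $\C\cap\T$. Once both results are in hand, no further argument is needed to obtain the present corollary.
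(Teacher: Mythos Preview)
Your deduction of this corollary from the preceding one via Theorem~\ref{thm:2} is correct and is exactly how the paper treats it (``as a direct consequence of the above''). One tangential remark: your speculative sketch for the upstream equivalence $\C/(\C\cap\T)\simeq\F$ via the canonical torsion decomposition is \emph{not} the route the paper takes---there the equivalence is pulled back from Corollary~\ref{cor:4} in the two-term category $\butt(\alg)$ through Lemma~\ref{lem:ideal} and the Brenner--Butler machinery---but that does not affect the validity of the present argument.
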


To prove our main results we need to consider cotorsion theory in two-term categories. We do this in Section 1.
In Section 2 we prove a cotorsion version of the tilting theorem of Brenner-Butler for two-term categories.
Then we show the relation to torsion pairs in the module category in Section 3, before we prove our main results in Section 4.

\section{Cotorsion in two-term categories and HRS-tilting}

The main results in this section are due to Pauksztello and Zvonareva \cite{PZ}.
Our proofs are independent and follow a slightly different path, and for completeness and readability we provide proofs. 

All subcategories are assumed to be full and closed under isomorphisms,
and $\alg$ is always a finite dimensional algebra.

We will consider {\em two-term categories}.
In later sections, we will only need the following special case. Let $\per \alg = K^b(\alg)$ denote the homotopy category of complexes of projective objects in $\mod \alg$.
The two-term category $\butt(\alg)$ is the full extension-closed subcategory 
of $\per \alg$ with complexes concentrated in degree 0 and -1.
Being an extension-closed subcategory of a triangulated category, $\butt(\alg)$ has
in particular the structure of an {\em extriangulated category} \cite{NP}, or $\et$-category for short. We do not give the full definition of such categories here, but refer to
\cite{NP} for detailed definitions and notation. We do however note that the bifunctor
$\et (-,-) \colon \ee^{\op} \times \ee \to \Ab$ involved in the definition of an extriangulated structure on $\ee$ is just given by $\et(X,Y) \colon = \Hom_{\t}(X,Y[1])$ in the case we consider; namely when $\ee$ is an extension-closed subcategory of 
a triangulated category $\t$. In this case we also have that the
$\et$-triangles are just the usual triangles in $\t$ whose elements belong to $\ee$.
In this section, we denote by $\t$ a triangulated category with split idempotents.

Silting theory gives rise to a more general notion of two-term categories. 

\begin{definition}[\cite{KV,AI}]
	A subcategory $\s$ of $\t$ is called a silting subcategory if the following hold:
	\begin{enumerate}
		\item $\s$ is closed under direct summands;
		\item $\Hom_\t(S_1,S_2[i])=0$ for any $S_1,S_2\in\s$ and any $i>0$;
		\item $\thick(\s)=\t$.
	\end{enumerate}
\end{definition}

\begin{remark}
    By \cite[Theorem~2.9]{IY}, if a triangulated category has an idempotent complete silting subcategory, then it is idempotent complete.
\end{remark}

If $\s$ is the additive closure $\add S$ of an object $S$, we say that $S$ is a silting object,
and sometimes just write $S$ for $\add S$. We note that, in particular, $\alg$ is a silting object in $\per \alg$.

For subcategories $\x, \y$ of $\t$, we let $\x \ast \y$ denote the subcategory
with objects $Z$ which occur in triangles
$$X \to Z \to Y \to X[1]$$ with $X \in \x$ and $Y \in \y$.
We note that with this notation, we have  $\butt(\alg) = \alg \ast \alg[1]$.

For any silting category $\s$ in $\t$, we can also consider the
{\em two-term category} $\s \ast \s[1]$, and we will see that is an $\et$-category.

We proceed by recalling the notion of cotorsion pairs in an $\et$-category, from \cite{LN}.

\begin{definition}
	A pair of subcategories $(\uu,\vv)$ of an $\et$-category $\ee$ is called a cotorsion pair if for any $X\in\ee$, we have
	\begin{enumerate}
		\item $\et(\uu,X)=0$ if and only if $X\in\vv$;
		\item $\et(X,\vv)=0$ if and only if $X\in\uu$.
	\end{enumerate}
\end{definition}

The following follows directly from the definition of cotorsion pairs.

\begin{lemma}\label{lem:ex}
	Let $(\uu,\vv)$ be a cotorsion pair in an $\et$-category $\ee$. Then $\uu$ and $\vv$ are closed under extensions and direct summands.
\end{lemma}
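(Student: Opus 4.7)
The plan is to derive both closure properties directly from the defining bifunctorial/characterization properties of $\et$, exactly as one does for cotorsion pairs in abelian or exact categories, but invoked through the axioms of extriangulated categories (which we take from \cite{NP}).

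First, for closure under direct summands, I would use the additivity of the bifunctor $\et$ in each argument. If $X\oplus Y\in\uu$, then for every $V\in\vv$ we have $\et(X,V)\oplus\et(Y,V)\cong\et(X\oplus Y,V)=0$, forcing $\et(X,V)=0=\et(Y,V)$ for all $V\in\vv$, and therefore $X,Y\in\uu$ by the characterization $\uu=\{Z\mid\et(Z,\vv)=0\}$. The argument for $\vv$ is the same using additivity in the first variable.

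Next, for closure under extensions, I would exploit the long exact sequence in $\et$ associated to an $\et$-triangle, which is part of the definition of an extriangulated category. Suppose $X\to Z\to Y\dashrightarrow$ is an $\et$-triangle with $X,Y\in\uu$, and fix $V\in\vv$. Applying $\et(-,V)$ produces an exact sequence
$$\et(Y,V)\longrightarrow\et(Z,V)\longrightarrow\et(X,V),$$
whose outer terms vanish because $X,Y\in\uu$ and $V\in\vv$. Hence $\et(Z,V)=0$ for every $V\in\vv$, so $Z\in\uu$. Symmetrically, if $X,Y\in\vv$, apply $\et(U,-)$ for $U\in\uu$ to the same $\et$-triangle to get an exact sequence $\et(U,X)\to\et(U,Z)\to\et(U,Y)$ whose outer terms vanish, giving $Z\in\vv$.

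There is essentially no obstacle here: the result is a formal consequence of (i) additivity of $\et$ and (ii) the long exact sequences induced by $\et$-triangles. The only point to be careful about is citing the correct fragment of the extriangulated axioms from \cite{NP} that guarantees this exactness; in the specific setting needed later, where $\ee$ is an extension-closed subcategory of a triangulated category $\t$ and $\et(-,-)=\Hom_{\t}(-,-[1])$, these exact sequences are just the standard long exact sequences coming from triangles in $\t$.
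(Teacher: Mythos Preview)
Your argument is correct and is precisely the ``direct from the definition'' verification the paper has in mind; in fact the paper does not spell out a proof at all, simply remarking that the lemma follows directly from the definition of cotorsion pairs. Your additivity argument for direct summands and long exact sequence argument for extensions are exactly the standard (and only natural) way to unpack that remark.
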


For two subcategories $\uu$ and $\vv$ of an $\et$-category $\ee$, define the following  subcategories of $\ee$:
\[\Cone(\vv,\uu)=\{X\in\ee\mid \exists\ \text{an $\et$-triangle}\ V\to U\to X\ \text{with}\ U\in\uu, V\in\vv \},\]
and
\[\Cocone(\vv,\uu)=\{X\in\ee\mid \exists\ \text{an $\et$-triangle}\ X\to V\to U \text{with}\ U\in\uu, V\in\vv \}.\]

\begin{definition}
	A cotorsion pair $(\uu,\vv)$ in $\ee$ is called complete if $$\ee=\Cone(\vv,\uu)=\Cocone(\vv,\uu).$$
\end{definition}

\begin{lemma}\label{lem:complete}
	A pair $(\uu,\vv)$ of subcategories of the extriangulated category $\ee$ is a complete cotorsion pair if and only if
	\begin{itemize}
		\item[-] $\uu$ and $\vv$ are closed under direct summands;
		\item[-]  $\et(\uu,\vv) =0$ and
		\item[-]  $\ee =\Cone(\vv,\uu)=\Cocone(\vv,\uu)$.
	\end{itemize}
\end{lemma}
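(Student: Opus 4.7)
The forward direction of the ``if and only if'' is essentially a tautology once Lemma \ref{lem:ex} is in hand: a complete cotorsion pair satisfies $\et(\uu,\vv)=0$ by the defining property, is closed under summands by Lemma \ref{lem:ex}, and the condition $\ee=\Cone(\vv,\uu)=\Cocone(\vv,\uu)$ is literally the definition of completeness. So the work is in the reverse direction.

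Assume the three bulleted conditions. I want to verify the two cotorsion axioms: $\et(\uu,X)=0 \Leftrightarrow X\in\vv$ and $\et(X,\vv)=0 \Leftrightarrow X\in\uu$. One implication in each is free: $\et(\uu,\vv)=0$ gives $X\in\vv \Rightarrow \et(\uu,X)=0$ and dually $X\in\uu \Rightarrow \et(X,\vv)=0$. The content is in the converses, and by symmetry I will describe only the first: suppose $\et(\uu,X)=0$ and show $X\in\vv$.

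The key move is to feed $X$ into the $\Cocone$ side of the completeness hypothesis. By $\ee=\Cocone(\vv,\uu)$ there is an $\et$-triangle
\[ X \longrightarrow V' \longrightarrow U' \dashrightarrow \]
with $V'\in\vv$, $U'\in\uu$, whose class is an element $\eta\in\et(U',X)$. Now apply the long exact sequence obtained from the extriangulated structure by mapping out of $U'$ (or equivalently into $X$): the connecting homomorphism sends $\id_{U'}$ (respectively $\id_X$) to $\eta\in\et(U',X)$. Since $U'\in\uu$ and $\et(\uu,X)=0$ by assumption, we conclude $\eta=0$. A standard fact about extriangulated categories (realizations of the zero class are split) then yields $V'\cong X\oplus U'$. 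Because $\vv$ is closed under direct summands, $X\in\vv$.

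The dual statement follows the same script but uses $\ee=\Cone(\vv,\uu)$: given $\et(X,\vv)=0$, pick an $\et$-triangle $V\to U\to X\dashrightarrow$ with $V\in\vv$, $U\in\uu$ and class $\delta\in\et(X,V)$; then $\delta=0$, the triangle splits as $U\cong X\oplus V$, and closure of $\uu$ under summands gives $X\in\uu$. The only place where care is needed is to check that the conventions of the 6-term exact sequence in an extriangulated category identify the connecting map image with the class $\eta$ (respectively $\delta$) of the chosen $\et$-triangle; this is the standard Yoneda-type description from \cite{NP} and is where one must be attentive, but there is no genuine obstacle beyond correctly tracking directions.
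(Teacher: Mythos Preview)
Your proposal is correct and follows essentially the same approach as the paper. The paper's proof is terser---it only writes out the implication $\et(X,\vv)=0 \Rightarrow X\in\uu$ via the $\Cone$ decomposition and leaves the dual implicit---whereas you spell out both halves and make the splitting argument (vanishing of the extension class) explicit; but the underlying idea is identical.
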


\begin{proof}
    The ``only if'' follows from the definition of cotorsion pairs and Lemma~\ref{lem:ex}. So we only need to show the ``if" part. For any $X\in\ee$, there is an $\et$-triangle $$V\to U\to X$$ with $V\in\vv$ and $U\in\uu$. If $X$ satisfies that $\et(X,\vv)=0$, then $X$ is a direct summand of $U$ and hence is in $\uu$. 
\end{proof}

\begin{remark}\label{exm:tri}
	If we consider the triangulated category $\t$ as an extriangulated category
	in the natural way, with $\et(X,Y) \colon = \Hom_{\t}(X,Y[1])$,
	then a cotorsion pair in $\t$ is a pair of subcategories $(\uu,\vv)$ satisfying
	\begin{itemize}
		\item[-]  $\uu$ and $\vv$ are closed under direct summands;
		\item[-]  $\Hom(\uu,\vv[1])=0$; and
		\item[-]  $\t=\uu\ast\vv[1]$.
	\end{itemize}
	This is because $\Cone(\vv,\uu)=\uu\ast\vv[1]$ and $\Cocone(\vv,\uu)=\uu[-1]\ast\vv$. So $(\uu,\vv)$ is a cotorsion pair in a triangulated category if and only if $(\uu,\vv[1])$ is a torsion pair in the sense of \cite{IYo} if and only if $(\uu,\vv)$ is a cotorsion pair in the sense of \cite{N}.
\end{remark}

Silting theory is closely connected to the theory of co-t-structures, as introduced by Paukzstello \cite{P} and Bondarko \cite{B}.
 
\begin{definition}
	A co-$t$-structure on the triangulated category $\t$ is a pair of subcategories $(\x, \y)$ satisfying
	\begin{enumerate}
		\item $\x$ and $\y$ are closed under direct summands;
		\item $\x[-1]\subset \x$;
		\item $\Hom(\x,\y[1])=0$;
		\item $\t=\x\ast \y[1]$.
	\end{enumerate}
\end{definition}

Comparing with the notion of cotorsion pair, we have the following.

\begin{remark}\label{rmk:0}
	A pair of subcategories $\c=(\x,\y)$ of $\t$ is a co-$t$-structure if and only if it is a complete cotorsion pair in $\t$ such that $\x[-1]\subset \x$. 	This follows form Remark~\ref{exm:tri}.
%	%The co-heart of a co-$t$-structure is exactly its core as a cotorsion pair. The heart of a %co-$t$-structure as a cotorsion pair is $(Cone(\y,\ch_\K)\cap %CoCone(\ch_\K,\x))/\ch_\K\subset(\y\cap\x)/\ch_\K=0$.
\end{remark}
%´

\begin{definition}
	A co-$t$-structure $(\x,\y)$ is called \emph{bounded} if
	$$\t=\bigcup_{n\in\mathbb{Z}}\x[n]=\bigcup_{n\in\mathbb{Z}}\y[n].$$
\end{definition}

%\begin{definition}[\cite{KV,AI}]
%	A subcategory $\s$ of $\t$ is called a silting subcategory if the following hold:
%	\begin{enumerate}
%		\item $\s$ is closed under direct summands;
%		\item $\Hom_\C(S_1,S_2[i])=0$ for any $S_1,S_2\in\s$;
%		\item $\thick(\s)=\t$.
%	\end{enumerate}
%\end{definition}

For a silting subcategory $\s$ of $\t$, we define subcategories of $\t$:

$$\x_\s= \ \bigcup_{n\leq 0}\s[n]\ast\cdots\ast\s[-1]\ast\s$$
and
$$\y_\s= \ \bigcup_{n\geq 0}\s\ast\s[1]\cdots\ast\s[n].$$
%By definition, we have $\K^{\leq m}(\s)=\K^{\leq 0}(\s)[m]$ and $\K^{\geq m}(\s)=\K^{\geq 0}(\s)[m]$.

\begin{definition}
	Let $\c = (\x,\y)$ be a co-$t$-structure pair in a triangulated  category $\t$. The subcategory $\ch_{\c} = \x \cap \y$ is called the co-heart of $\c$, and the subcategory $\butt(\c)  =\x[1 ]\cap\y$ is called the {\em extended co-heart} of $\c$.
\end{definition}

Co-hearts of bounded 
co-$t$-structures are silting subcategories, in fact we have the following bijection.

\begin{proposition}[{\cite[Corollary~5.9]{MSSS}}, cf. also {\cite[Proposition~2.8]{IY}}]\label{prop:msss}
	The map 
	$$\c = (\x,\y)\mapsto \ch_\c$$
	is a bijection between bounded co-$t$-structure and silting subcategories of $\t$, with inverse
	$$\s\mapsto(\x_\s,\y_\s).$$
	%where
	%$$\K^{\leq 0}(\s)=\cup_{n\geq 0}\s[-n]\ast\cdots\ast\S[-1]\ast\S$$
	%and
	%$$\K^{\geq 0}(\S)=\cup_{n\geq 0}\S\ast\S[1]\cdots\ast\S[n].$$
\end{proposition}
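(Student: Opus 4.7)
The plan is to verify the two directions separately, then check they are mutually inverse. It helps to first record two preliminary facts valid for any co-$t$-structure $\c = (\x,\y)$. Firstly, applying $\x[-1]\subset\x$ to the orthogonality axiom shows that $\y$ is closed under positive shifts: if $Y\in\y$, then $\Hom(\x,Y[2]) = \Hom(\x[-1],Y[1]) \subset \Hom(\x,Y[1]) = 0$, so $Y[1]\in\y$; iterating gives $\Hom(\x,\y[i]) = 0$ for all $i\geq 1$. Secondly, viewing $\c$ as a complete cotorsion pair via Remark~\ref{rmk:0} and applying Lemma~\ref{lem:ex}, both $\x$ and $\y$ are closed under extensions and direct summands.

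For the direction $\c\mapsto\ch_\c$, set $\s = \x\cap\y$. Closure of $\s$ under direct summands is immediate, and the vanishing $\Hom(\s,\s[i]) = 0$ for $i\geq 1$ follows from the preliminary since $\s\subset\x$ and $\s[i]\subset\y$. To see $\thick(\s) = \t$, I would use boundedness: for any $M\in\t$ one finds integers $a\geq b$ with $M\in\x[a]\cap\y[b]$, and I would induct on the amplitude $a-b$. The base case $a=b$ gives $M[-a]\in\s$ directly; the inductive step applies the decomposition $\t = \x\ast\y[1]$ to produce a triangle which, after a careful use of the octahedral axiom, expresses $M$ as an extension of a shift of $\s$ by objects of strictly smaller amplitude. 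The equalities $\x = \x_\s$ and $\y = \y_\s$ then follow by combining this filtration with extension-closedness.

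Conversely, given a silting subcategory $\s$, I would verify the axioms for $(\x_\s,\y_\s)$. The conditions $\x_\s[-1]\subset\x_\s$ and boundedness are tautological from the definitions. Closure under direct summands uses idempotent completeness of $\t$ (noted in the remark after the silting definition) together with a standard octahedral splitting. The orthogonality $\Hom(\x_\s,\y_\s[1]) = 0$ reduces by induction on the lengths of the defining extension sequences to $\Hom(\s[k],\s[\ell+1]) = 0$ for $k\leq 0\leq\ell$, which is precisely the silting hypothesis. The decomposition $\t = \x_\s\ast\y_\s[1]$ is the most delicate point; for $M\in\thick(\s) = \t$, one constructs a single two-term triangle $X_M\to M\to Y_M[1]$ with $X_M\in\x_\s$ and $Y_M\in\y_\s$ by iterated right $\s$-approximations, using the silting Ext-vanishing to ensure the pieces stay on the correct side. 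Finally, $\ch_{(\x_\s,\y_\s)} = \s$: the inclusion $\s\subset\x_\s\cap\y_\s$ is clear, and for the reverse, any $M\in\x_\s\cap\y_\s$ fits in its own decomposition triangle $X_M\to M\to Y_M[1]$ which splits by the orthogonality, so $M$ is a summand of an object of $\s$, hence in $\s$.

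The main obstacle is the construction of the two-term decomposition $\t = \x_\s\ast\y_\s[1]$ from the silting data alone. This amounts to splicing the upward and downward $\s$-approximations of an arbitrary object of $\thick(\s)$ into one triangle whose outer terms land in $\x_\s$ and $\y_\s$ respectively, and it is precisely here that the silting Ext-vanishing, idempotent completeness, and the octahedral axiom have to be combined carefully.
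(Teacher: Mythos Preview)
The paper does not supply its own proof of this proposition: it is quoted as \cite[Corollary~5.9]{MSSS} (cf.\ also \cite[Proposition~2.8]{IY}) and used as a black box, so there is nothing in the paper to compare your argument against.

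That said, your outline follows the standard route taken in those references: extract the co-heart from a bounded co-$t$-structure by an amplitude induction, and conversely build $(\x_\s,\y_\s)$ from a silting subcategory and verify the axioms, with the decomposition $\t=\x_\s\ast\y_\s[1]$ as the crux. Your identification of this decomposition as the main obstacle is accurate. One point to tighten: in the forward direction, the inductive step for $\thick(\s)=\t$ does not require the octahedral axiom in any subtle way---applying the decomposition $\t=\x[a-1]\ast\y[a]$ to an object $M\in\x[a]\cap\y[b]$ already yields a triangle whose left term lies in $\x[a-1]\cap\y[b]$ (by extension-closure of $\y[b]$) and whose right term lies in $\x[a]\cap\y[a]=\s[a]$, so the amplitude drops directly. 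Similarly, the equalities $\x=\x_\s$ and $\y=\y_\s$ follow from this same filtration plus the orthogonality $\Hom(\x_\s,\y_\s[1])=0$, rather than requiring a separate argument. Your sketch is otherwise sound as a plan, though several steps (closure under summands of $\x_\s$, the approximation construction for $\t=\x_\s\ast\y_\s[1]$) are gestured at rather than executed.
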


%For a silting subcategory $\s$, we denote $$\butt(\s)=\butt((\x_\s,\y_\s)).$$

The extended co-heart $\butt(\c)$ of a co-$t$-structure $\c$ is closed under extensions and direct summands. In particular, $\butt(\c)$ is an $\et$-category. In fact, by results of Iyama, J{\o}gensen and Yang, the extended co-hearts are exactly "two-term categories", in the following sense.

\begin{lemma}[{\cite[Lemma~2.1]{IJY}}]\label{lem:ijy}
		Let $\c=(\x,\y)$ be a bounded co-$t$-structure in a triangulated category $\t$.
 Then
	$$\butt(\c)=\ch_\c\ast\ch_\c[1]=:\butt(\ch_\c).$$
\end{lemma}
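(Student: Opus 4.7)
The plan is to prove the two inclusions $\ch_\c \ast \ch_\c[1] \subset \butt(\c)$ and $\butt(\c) \subset \ch_\c \ast \ch_\c[1]$ separately. The key tool, via Remark~\ref{rmk:0}, is that a co-$t$-structure is in particular a complete cotorsion pair in $\t$; hence $\x = \{Z \in \t \mid \Hom_\t(Z,\y[1]) = 0\}$ and $\y = \{Z \in \t \mid \Hom_\t(\x,Z[1]) = 0\}$. Combining the latter with $\x[-1] \subset \x$ yields the dual closure $\y[1] \subset \y$, since for $Y \in \y$ we have $\Hom_\t(\x, Y[2]) = \Hom_\t(\x[-1], Y[1]) = 0$.

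For the easy inclusion, observe that $\ch_\c = \x \cap \y \subset \x[1] \cap \y = \butt(\c)$, where $\x \subset \x[1]$ follows from $\x[-1] \subset \x$. Likewise $\ch_\c[1] \subset \x[1]$ and $\ch_\c[1] \subset \y[1] \subset \y$, so $\ch_\c[1] \subset \butt(\c)$. Since $\x[1]$ and $\y$ are both closed under extensions (Lemma~\ref{lem:ex}), so is their intersection $\butt(\c)$, and thus $\ch_\c \ast \ch_\c[1] \subset \butt(\c)$.

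For the harder inclusion, let $B \in \butt(\c)$. Using $\t = \x \ast \y[1]$, choose a triangle
\[ X \to B \to Y[1] \to X[1] \]
with $X \in \x$ and $Y \in \y$. I claim $X, Y \in \ch_\c$. To show $X \in \y$, for any $X' \in \x$ apply $\Hom_\t(X', -)$ to the above triangle and extract the segment $\Hom_\t(X', Y[1]) \to \Hom_\t(X', X[1]) \to \Hom_\t(X', B[1])$; both outer terms vanish by the cotorsion axiom $\Hom_\t(\x, \y[1]) = 0$, the first because $Y \in \y$ and the second because $B \in \y$. Hence $\Hom_\t(\x, X[1]) = 0$, so $X \in \y$. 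To show $Y \in \x$, rotate to $Y \to X \to B \to Y[1]$ and for any $Y' \in \y$ apply $\Hom_\t(-, Y'[1])$, extracting $\Hom_\t(X, Y'[1]) \to \Hom_\t(Y, Y'[1]) \to \Hom_\t(B[-1], Y'[1])$; both outer terms vanish by the cotorsion axiom, the first because $X \in \x$, and the second because $B \in \x[1]$ gives $B[-1] \in \x$. Hence $\Hom_\t(Y, \y[1]) = 0$, so $Y \in \x$, and we conclude $B \in \ch_\c \ast \ch_\c[1]$.

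The only real obstacle is marshalling the correct Hom-vanishing statements and keeping track of the shifts; once the orthogonality characterizations of $\x$ and $\y$ and the dual closure $\y[1] \subset \y$ are in hand, the proof reduces to two applications of the long exact sequence of a triangle, as above.
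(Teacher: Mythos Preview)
The paper does not supply its own proof of this lemma; it simply cites \cite[Lemma~2.1]{IJY}. Your argument is correct and is essentially the standard one: after decomposing $B \in \x[1]\cap\y$ via $\t = \x \ast \y[1]$ as $X \to B \to Y[1] \to X[1]$, the cotorsion-pair orthogonality characterizations of $\x$ and $\y$ (available through Remark~\ref{rmk:0}) force $X$ and $Y$ into the co-heart, exactly as you carried out. One incidental observation: your proof never invokes boundedness, and indeed the equality $\x[1]\cap\y = \ch_\c \ast \ch_\c[1]$ holds for any co-$t$-structure, bounded or not.
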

%
%\begin{proof}
%	This follows from Proposition~\ref{prop:msss}.
%\end{proof}

\begin{proposition}[{\cite[Theorem~2.3]{IJY}}]\label{prop:ijy}
	The bijection in Proposition~\ref{prop:msss} restricts to a bijection between the set of bounded co-$t$-structures $\c'= (\x',\y')$ with $\x\subset\x' \subset\x[1]$ and the set of silting subcategories of $\t$ which are in $\butt(\c)$.
\end{proposition}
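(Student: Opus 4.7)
The plan is to invoke the bijection $\c\mapsto\ch_\c$ of Proposition~\ref{prop:msss} and verify that it restricts correctly to the two specified subsets. For a bounded co-$t$-structure $\c'=(\x',\y')$ with coheart $\s'=\ch_{\c'}$, the content reduces to the equivalence
$$\x\subset\x'\subset\x[1]\ \Longleftrightarrow\ \s'\subset\butt(\c),$$
after which the proposition follows from Proposition~\ref{prop:msss}, boundedness of $\c'$ in the reverse direction being built into that bijection.

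For the forward direction I would assume $\x\subset\x'\subset\x[1]$. The inclusion $\s'\subset\x'\subset\x[1]$ is immediate. For $\s'\subset\y$, I would use the standard orthogonality characterizations $\y=\{Y\mid\Hom(\x,Y[1])=0\}$ and $\y'=\{Y\mid\Hom(\x',Y[1])=0\}$ attached to any co-$t$-structure. Since $\x\subset\x'$, the vanishing condition cutting out $\y'$ is stronger than that cutting out $\y$, so $\y'\subset\y$, whence $\s'\subset\y'\subset\y$. Combining, $\s'\subset\x[1]\cap\y=\butt(\c)$.

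For the reverse direction I would assume $\s'\subset\butt(\c)=\x[1]\cap\y$ and apply the inverse of the bijection, so $\c'=(\x_{\s'},\y_{\s'})$. From $\x[-1]\subset\x$ one gets $\x\subset\x[1]$, and iterating, $\x[1]$ is closed under all negative shifts; as part of a co-$t$-structure, $\x[1]$ is also closed under extensions and direct summands. Starting from $\s'\subset\x[1]$ and iteratively applying the operations in the definition $\x_{\s'}=\bigcup_{n\leq 0}\s'[n]\ast\cdots\ast\s'$ therefore keeps us inside $\x[1]$, giving $\x_{\s'}\subset\x[1]$. Dually, $\y$ is closed under positive shifts and extensions, so $\s'\subset\y$ yields $\y_{\s'}\subset\y$; reversing the orthogonality complement (using $\x=\{X\mid\Hom(X,\y[1])=0\}$) translates $\y_{\s'}\subset\y$ into $\x\subset\x_{\s'}$, completing the chain $\x\subset\x'\subset\x[1]$.

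The only real technical point is the mutual orthogonality between the two halves of a co-$t$-structure — the fact that each half is the $[1]$-perpendicular of the other — which is what converts an inclusion on one side into the reversed inclusion on the other. With that, together with the routine shift-closure and extension-closure of $\x$ and $\y$, the argument reduces to chasing set-theoretic inclusions, and Proposition~\ref{prop:msss} does the remaining work.
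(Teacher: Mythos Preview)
The paper does not supply its own proof of this proposition; it is quoted from \cite[Theorem~2.3]{IJY} and stated without argument. Your proof is correct and follows the natural route: reduce to the equivalence $\x\subset\x'\subset\x[1]\Longleftrightarrow \s'\subset\butt(\c)$, handle the forward direction via $\s'\subset\x'\subset\x[1]$ and $\s'\subset\y'\subset\y$ (the latter from the orthogonality description of $\y$, available by Remark~\ref{rmk:0} and the definition of cotorsion pair), and handle the reverse direction by using shift- and extension-closure of $\x[1]$ and $\y$ to get $\x_{\s'}\subset\x[1]$ and $\y_{\s'}\subset\y$, then converting $\y_{\s'}\subset\y$ into $\x\subset\x_{\s'}$ via the dual orthogonality description of $\x$.
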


For the next lemmas, fix a  a bounded co-$t$-structure $\c=(\x,\y)$  in a triangulated category $\t$. 

\begin{lemma}\label{lem:sub}
	Let $\s$ be a silting subcategory of $\t$, with $\s \subseteq \butt(\c)$. Then $\x \subset\x_\s \subset\x[1]$ and $\y[1]\subset\y_\s \subset\y$.
\end{lemma}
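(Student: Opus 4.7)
The plan is to obtain both chains of inclusions as essentially formal consequences of Propositions~\ref{prop:msss} and~\ref{prop:ijy}, combined with the orthogonality property $\Hom(\x,\y[1])=0$ built into the definition of a co-$t$-structure.

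First I would invoke Proposition~\ref{prop:ijy}: since $\s$ is a silting subcategory of $\t$ contained in $\butt(\c)$, it corresponds under the bijection of Proposition~\ref{prop:msss} to some bounded co-$t$-structure $(\x',\y')$ satisfying $\x\subset\x'\subset\x[1]$. By Proposition~\ref{prop:msss}, the inverse of that bijection is $\s\mapsto(\x_\s,\y_\s)$, so $(\x',\y')=(\x_\s,\y_\s)$. This gives the inclusions $\x\subset\x_\s\subset\x[1]$ immediately, with no further computation required.

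Next I would deduce the $\y$-inclusions from the $\x$-inclusions by orthogonality. Viewing both $(\x,\y)$ and $(\x_\s,\y_\s)$ as (complete) cotorsion pairs in $\t$ via Remark~\ref{exm:tri}, we have $Y\in\y$ if and only if $\Hom(\x,Y[1])=0$, and similarly $Y\in\y_\s$ if and only if $\Hom(\x_\s,Y[1])=0$. The inclusion $\x\subset\x_\s$ therefore yields $\y_\s\subset\y$ at once. For the remaining inclusion $\y[1]\subset\y_\s$, pick $Y\in\y[1]$ and any $X\in\x_\s\subset\x[1]$; then $X[-1]\in\x$ and $Y[-1]\in\y$, and the identification $\Hom(X,Y[1])=\Hom(X[-1],Y[-1][1])$ together with the co-$t$-structure orthogonality $\Hom(\x,\y[1])=0$ for the pair $\c=(\x,\y)$ forces $\Hom(\x_\s,Y[1])=0$, that is, $Y\in\y_\s$.

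The only step that requires a moment of attention is the correct identification of $(\x_\s,\y_\s)$ with the co-$t$-structure produced by the restricted bijection of Proposition~\ref{prop:ijy}; this is just a matter of unwinding the two statements. Once this is in place, the orthogonality step is a one-line shift calculation, and I do not anticipate any genuine obstacle in the argument.
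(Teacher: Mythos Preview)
Your proposal is correct and follows essentially the same route as the paper: the paper's proof is a single line declaring the lemma ``a direct consequence of Proposition~\ref{prop:ijy}.'' You have simply made explicit the orthogonality argument that extracts the $\y$-inclusions from the $\x$-inclusions, which the paper leaves to the reader.
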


\begin{proof}
	This is a direct consequence of Proposition \ref{prop:ijy}.
	\end{proof}

\begin{lemma}\label{lem:cap}
	Let $\s$ be a silting subcategory of $\t$, with $\s \subseteq \butt(\c)$.  
	Then $$(\x_\s\cap\butt(\c),\y_\s\cap\butt(\c))$$ is a complete cotorsion pair in $\butt(\c)$.
\end{lemma}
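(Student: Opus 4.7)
The plan is to verify the three conditions of Lemma~\ref{lem:complete}. The starting point is that $(\x_\s,\y_\s)$ is a bounded co-$t$-structure in $\t$ by Proposition~\ref{prop:msss}, hence a complete cotorsion pair in $\t$ by Remark~\ref{rmk:0}. In particular $\x_\s$ and $\y_\s$ are closed under direct summands and extensions, $\Hom_\t(\x_\s,\y_\s[1])=0$, and every object of $\t$ fits in the two expected triangles for this pair. Since $\butt(\c)=\x[1]\cap\y$ is itself closed under summands (both $\x[1]$ and $\y$ are), the intersections $\x_\s\cap\butt(\c)$ and $\y_\s\cap\butt(\c)$ are closed under summands; this gives the first bullet of Lemma~\ref{lem:complete}. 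The second bullet is immediate, because $\et$ on $\butt(\c)$ is the restriction of $\Hom_\t(-,-[1])$, which already vanishes on $\x_\s\times\y_\s$.

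The real content lies in the third bullet: for every $X\in\butt(\c)$, I must exhibit $\et$-triangles
$$V\to U\to X\qquad\text{and}\qquad X\to V'\to U'$$
in $\butt(\c)$ with $U,U'\in\x_\s\cap\butt(\c)$ and $V,V'\in\y_\s\cap\butt(\c)$. The existence of such triangles in $\t$ comes for free from the completeness of $(\x_\s,\y_\s)$ in $\t$. What needs checking is that the four ``outer'' objects $U,V,U',V'$ all land in $\butt(\c)=\x[1]\cap\y$.

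To close this up, I would record two auxiliary facts about the ambient co-$t$-structure $(\x,\y)$: the classes $\x$ and $\y$ are both closed under extensions (hence so is $\x[1]$), and $\y[1]\subset\y$, a standard consequence of the axiom $\x[-1]\subset\x$ together with the cotorsion-pair property of the co-$t$-structure. By Lemma~\ref{lem:sub}, $\x_\s\subset\x[1]$ and $\y_\s\subset\y$, so $U,U'\in\x[1]$ and $V,V'\in\y$ hold automatically. It then remains to show $U,U'\in\y$ and $V,V'\in\x[1]$. For the first triangle, $U$ is an extension of $X$ by $V$, both in $\y$, so $U\in\y$ by extension-closure; rotating to $X[-1]\to V\to U$ and using $X[-1]\in\x\subset\x[1]$ and $U\in\x[1]$ gives $V\in\x[1]$. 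The second triangle is symmetric: $V'$ is an extension of $U'$ by $X$, both in $\x[1]$, so $V'\in\x[1]$; and in the rotated triangle $V'\to U'\to X[1]$ both endpoints lie in $\y$ (here I use $X[1]\in\y$, which is where $\y[1]\subset\y$ enters), so $U'\in\y$.

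The one step that needs care is the derivation of $\y[1]\subset\y$ from the co-$t$-structure axioms, and the bookkeeping of which of $\x[1]$ and $\y$ each outer object is tested against in each rotation. Beyond that, the argument is purely formal once the containments of Lemma~\ref{lem:sub} and the extension-closure of $\x$ and $\y$ are in hand.
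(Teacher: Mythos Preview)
Your proposal is correct and follows essentially the same approach as the paper: verify the three conditions of Lemma~\ref{lem:complete}, with the only nontrivial work being to show that the objects in the two ambient triangles from the co-$t$-structure $(\x_\s,\y_\s)$ actually lie in $\butt(\c)$, which you do by exploiting Lemma~\ref{lem:sub} together with extension-closure of $\x[1]$ and $\y$ (and the inclusion $\y[1]\subset\y$). The paper's argument is the same in content, merely phrased in $\ast$-notation and with the order of the two memberships checked for each object slightly rearranged.
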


\begin{proof}
	First, since $\x_\s$, $\y_\s$ and $\butt(\c)$ are closed under direct summands, so are $\x_\s\cap\butt(\c)$ and $\y_\s\cap\butt(\c)$.
	
	Second, note that the $\et$-structure of $\butt(\c)$ is inherited from the triangulated structure of $\C$. So $\et(\x_\s\cap\butt(\c),\y_\s\cap\butt(\c))=0$.
	
	%for any $X\in K^{\leq0}(\S)\cap\butt(\P)$, and any	 $Y\in K^{\geq 0}(\S)\cap\butt(\P)$, we have $\Ext^1(X,Y)=0$. 
	
	Finally, by Remark~\ref{rmk:0}, $(\x_\s,\y_\x)$ is a complete cotorsion pair in $\t$. Then we have $\t=\Cone(\y_\s,\x_\s)=\Cocone(\y_\s,\x_\s)$. So for any $Z\in\butt(\c)$, on one hand, there is a triangle
	\[ Y\to X\to Z\to Y[1] \]
	with $X\in \x_\s$ and $Y\in \y_\s$. 
	%Since $\s\subset\ch_\K\ast\ch_\K[1]$, 
	By Lemma~\ref{lem:sub}, we have $X\in \x[1]$. 
	% for some $n\geq 0$.
	Then $$Y\in\butt(\c)[-1]\ast\x[1]\subset \x \ast \x[1]\subset \x[1] \ast \x[1] = \x[1].$$ 
	But on the other hand, since $Y\in \y_\s$, by Lemma~\ref{lem:sub}, we also have $Y\in\y$. Hence $Y\in\butt(\c) = \x[1] \cap \y$. Since $Y,Z \in \butt(\c)$, so is $X$. Thus, we get an $\et$-triangle
	$$Y\to X\to Z$$
	in $\butt(\c)$, with $X\in\x_\s\cap\butt(\c)$, and $Y\in\y_\s\cap\butt(\c)$. This implies $$\butt(\c)=\Cone(\y_\s\cap\butt(\c),\x_\s\cap\butt(\c)).$$
	
	On the other hand, we have a triangle
	\[Z\to Y'\to X'\to Z[1] \]
	with $X'\in \x_\s$ and $Y'\in \y_\s$. Then by Lemma~\ref{lem:sub}, we have
	$$X'\in \y_\s\ast\butt(\c)[1]\subset \y\ast\butt(\c)[1] \subset \y \ast \y[1] \subset \y \ast \y = 
	\y.$$
	By Lemma~\ref{lem:sub}, we also have $X'\in\x_S\subset \x[1]$. So $X'\in\butt(\c) = \x[1] \cap \y$. Then so is $Y'$. Thus, we get an $\et$-triangle
	$$Z\to Y'\to X'$$
	in $\butt(\c)$, with $X'\in\x_\s\cap\butt(\c)$, and $Y'\in\y_\s\cap\butt(\c)$. This implies $$\butt(\c)=\Cocone(\y_\s\cap\butt(\c),\x_\s\cap\butt(\c)).$$
	Hence by Remark~\ref{exm:tri}, the proof is complete.
\end{proof} 

The cotorsion pair from the previous Lemma has the following alternative descriptions.

\begin{lemma}\label{lem:short}
	Let $\s$ be a silting subcategory of $\t$, with $\s \subseteq \butt(\c)$. Then
	\[\x_\s\cap\y=\x_\s\cap\butt(\c)=\butt(\s)[-1]\cap\butt(\c),\]
	\[\y_\s\cap\x[1]=\y_\s\cap\butt(\c)=\butt(\s)\cap\butt(\c). \]
\end{lemma}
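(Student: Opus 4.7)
The plan is to verify each of the four displayed equalities by direct set-theoretic manipulation, unfolding $\butt(\c)=\x[1]\cap\y$ and $\butt(\s)=\x_\s[1]\cap\y_\s$, and then applying Lemma~\ref{lem:sub}, which provides the chains $\x\subset\x_\s\subset\x[1]$ and $\y[1]\subset\y_\s\subset\y$.

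For the first chain, the equality $\x_\s\cap\y=\x_\s\cap\butt(\c)$ rewrites as $\x_\s\cap\y=\x_\s\cap\x[1]\cap\y$, which holds because $\x_\s\subset\x[1]$ by Lemma~\ref{lem:sub} makes the factor $\x[1]$ redundant. For the second equality, I would expand
\[
\butt(\s)[-1]\cap\butt(\c)=\bigl(\x_\s\cap\y_\s[-1]\bigr)\cap\butt(\c),
\]
so the claim reduces to showing that every $Z\in\x_\s\cap\butt(\c)=\x_\s\cap\y$ lies in $\y_\s[-1]$. But $Z\in\y$ gives $Z[1]\in\y[1]\subset\y_\s$ (again by Lemma~\ref{lem:sub}), hence $Z\in\y_\s[-1]$, as required.

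The second chain is formally dual. The equality $\y_\s\cap\x[1]=\y_\s\cap\butt(\c)$ rewrites as $\y_\s\cap\x[1]=\y_\s\cap\x[1]\cap\y$ and follows from $\y_\s\subset\y$. For the last equality, expand
\[
\butt(\s)\cap\butt(\c)=\x_\s[1]\cap\y_\s\cap\butt(\c);
\]
the claim is then that every $Z\in\y_\s\cap\butt(\c)=\y_\s\cap\x[1]$ already belongs to $\x_\s[1]$, which is immediate from $\x\subset\x_\s$ and hence $\x[1]\subset\x_\s[1]$.

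Since each step reduces to a one-line application of Lemma~\ref{lem:sub} together with the definition of the extended co-heart, there is no genuine obstacle; the only thing to watch is keeping track of the shift: one uses the inclusion $\y[1]\subset\y_\s$ for the first chain and the inclusion $\x\subset\x_\s$ for the second, so that in each case the "extra" factor coming from $\butt(\s)$ is absorbed by membership in $\butt(\c)$.
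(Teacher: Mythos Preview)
Your proof is correct and in fact cleaner than the paper's. Both arguments handle the first equality in each line identically, via $\x_\s\subset\x[1]$ (resp.\ $\y_\s\subset\y$) from Lemma~\ref{lem:sub}. The difference lies in the second equality. You use the \emph{intersection} description $\butt(\s)[-1]=\x_\s\cap\y_\s[-1]$ of the extended co-heart and then observe that $\butt(\c)\subset\y\subset\y_\s[-1]$ (from $\y[1]\subset\y_\s$), so the extra factor $\y_\s[-1]$ is automatically satisfied. The paper instead uses the \emph{$\ast$-product} description $\butt(\s)[-1]=\s[-1]\ast\s$: for $X\in\x_\s\cap\butt(\c)$ it produces a triangle $Y\to X\to Z\to Y[1]$ with $Y\in\x_\s[-2]$ and $Z\in\s[-1]\ast\s$, notes that $Y\in\x_\s[-2]\subset\x[-1]$ forces $\Hom(Y,X)=0$ (since $X\in\y$), and concludes that $X$ is a summand of $Z$, hence in $\butt(\s)[-1]$. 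Your route avoids this splitting argument entirely; the paper's route has the minor advantage of showing directly that $X\in\s[-1]\ast\s$ without invoking the equality $\x_\s[1]\cap\y_\s=\s\ast\s[1]$ from Lemma~\ref{lem:ijy}, but since that lemma is already available there is no real cost to your approach.
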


\begin{proof}
	By Lemma~\ref{lem:sub}, we have $\x_\s\subset\x[1]$, which implies $\x_\s\cap\y=\x_\s\cap\x[1]\cap\y=\x_\s\cap\butt(\c)$. To show $\x_\s\cap\butt(\c)=\butt(\s)[-1]\cap\butt(\c)$, since $\butt(\s)[-1]=\x_\s\cap\y_\s[-1]\subset\x_\s$, it is enough to show the inclusion $\x_\s\cap\butt(\c)\subset\butt(\s)[-1]\cap\butt(\c)$. For any $X\in\x_\s\cap\butt(\c)$, by $X\in\x_\s$, there is a triangle
	\[Y\to X\to Z\to Y[1] \]
	with $Y\in\x_\s[-2]$ and $Z\in\s[-1]\ast\s$. Then by Lemma~\ref{lem:sub}, we have $Y\in\x_\s[-2]\subset\x[-1]$. Then there is no zero map from $Y$ to $X$ by $X\in\butt(\c)\subset\y$. Hence $X$ is a direct summand of $Z$. So it is in $\butt(\s)[-1]$.
	
	The equations $\y_\s\cap\x[1]=\y_\s\cap\butt(\c)=\butt(\s)\cap\butt(\c)$ can be shown similarly.

	%\[(\S[-n]\ast\cdots\ast\S[-1]\ast\S)\cap(\ch_\K\ast\ch_\K[1])\subset(\S[-1]\ast\S)\cap(\ch_\K\ast\ch_\K[1])\]
	%for any integer $n\geq 2$. For any $X\in(\S[-n]\ast\cdots\ast\S[-1]\ast\S)\cap(\ch_\K\ast\ch_\K[1])$, there is a triangle
	%\[Y\to X\to Z\to Y[1] \]
	%with $Y\in\S[-n]\ast\cdots\S[-2]$ and $Z\in\S[-1]\ast\S$. Since $\S\subset\P\ast\P[1]$, so $Y\in\P[-n]\ast\cdots\ast\P[-1]$. Then there is no nonzero morphism from $Y$ to $X\in\P\ast\P[1]$. Thus, $X\in\S\ast\S[1]$.
\end{proof}

\begin{lemma}\label{lem:projcon}
	Let $(\uu,\vv)$ be a cotorsion pair in $\butt(\c)$. Then $\s_\c \subset\uu$ and  $\s_\c[1]\subset\vv$.
\end{lemma}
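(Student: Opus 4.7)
The plan is to use the defining orthogonality properties of the cotorsion pair $(\uu,\vv)$ in $\butt(\c)$, combined with the vanishing $\Hom_\t(\x,\y[1])=0$ coming from the co-$t$-structure $\c=(\x,\y)$. Recall that the $\et$-structure on $\butt(\c)$ is inherited from $\t$ via $\et(X,Y)=\Hom_\t(X,Y[1])$, so the criteria $S\in\uu$ and $S[1]\in\vv$ translate into the vanishing of certain $\Hom$-spaces in $\t$.

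First I would check that both $\s_\c=\ch_\c$ and $\s_\c[1]$ lie inside $\butt(\c)=\x[1]\cap\y$, so that the cotorsion pair $(\uu,\vv)$ may actually be applied to them. For $S\in\ch_\c=\x\cap\y$ this is immediate: $S\in\x\subset\x[1]$ and $S\in\y$. For $S[1]$, we use that $(\x,\y)$ being a co-$t$-structure gives $\x[-1]\subset\x$ and equivalently $\y[1]\subset\y$; thus $S[1]\in\x[1]$ and $S[1]\in\y[1]\subset\y$, so $S[1]\in\butt(\c)$.

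Next, to show $S\in\uu$, by the definition of the cotorsion pair it suffices to verify $\et(S,V)=\Hom_\t(S,V[1])=0$ for every $V\in\vv$. But $S\in\x$ and, since $\vv\subset\butt(\c)\subset\y$, we have $V[1]\in\y[1]$; hence the vanishing follows directly from $\Hom_\t(\x,\y[1])=0$. Analogously, to show $S[1]\in\vv$ it suffices to check $\et(U,S[1])=\Hom_\t(U,S[2])=0$ for every $U\in\uu$. Here $U\in\uu\subset\butt(\c)\subset\x[1]$, while $S[2]\in\y[2]\subset\y[1]$ (again by $\y[1]\subset\y$), so $\Hom_\t(U,S[2])=\Hom_\t(\x[1],\y[1][1])$ applied entrywise vanishes by the same co-$t$-structure orthogonality (shifted once).

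I do not expect a genuine obstacle: the argument is essentially a bookkeeping exercise with the shifts $\x[-1]\subset\x$ and the single vanishing $\Hom_\t(\x,\y[1])=0$. The only subtle point is keeping track of where $S$ and $S[1]$ sit relative to $\x$ and $\y$ after shifting, which is what forces the preliminary check that $\s_\c[1]\subset\butt(\c)$. Once those containments are in place, both conclusions $\s_\c\subset\uu$ and $\s_\c[1]\subset\vv$ drop out of the respective orthogonality conditions defining $(\uu,\vv)$.
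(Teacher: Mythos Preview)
Your proof is correct and follows essentially the same line as the paper's: both verify the vanishings $\et(\s_\c,\butt(\c))=0$ and $\et(\butt(\c),\s_\c[1])=0$ and then invoke the definition of a cotorsion pair. The only cosmetic difference is that the paper deduces these vanishings from the decomposition $\butt(\c)=\s_\c\ast\s_\c[1]$ (Lemma~\ref{lem:ijy}) together with the silting condition on $\s_\c$, while you obtain them directly from the co-$t$-structure axiom $\Hom_\t(\x,\y[1])=0$ and the inclusions $\s_\c\subset\x$, $\butt(\c)\subset\y$.
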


\begin{proof}
	By Lemma~\ref{lem:ijy}, we have $\butt(\c) = \s_\c \ast \s_\c[1]$. So $\Hom(\s_\c ,\butt(\c))[1])=0$ and $\Hom( \butt(\c), (\s_\c[1])[1])=0$ hold. These imply $\s_\c\subset\uu$ and $\s_\c[1]\subset\vv$, respectively, by the definition of cotorsion pairs.
\end{proof}

\begin{lemma}\label{lem:[0,2]}
	Let $(\uu,\vv)$ be a complete cotorsion pair in $\butt(\c)$. Then we have 
	\begin{itemize}
		\item[(i)] $\butt(\c)=(\uu[-1]\ast\vv)\cap(\uu\ast\vv[1])$, and
		\item[(ii)] $\uu\ast\vv[1]=\s_\c\ast\s_\c[1]\ast\s_\c[2]$.
		\end{itemize}
\end{lemma}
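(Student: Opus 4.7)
The plan is to derive (i) from the completeness of the cotorsion pair together with closure properties of the co-$t$-structure $\c$, and to derive (ii) from the decomposition $\butt(\c) = \s_\c \ast \s_\c[1]$ combined with a single octahedral construction.

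For (i), the inclusion $\butt(\c) \subseteq (\uu[-1] \ast \vv) \cap (\uu \ast \vv[1])$ is a direct translation of completeness: any $Z \in \butt(\c)$ admits $\et$-triangles $V \to U \to Z$ and $Z \to V' \to U'$ with $U, U' \in \uu$ and $V, V' \in \vv$, which viewed as triangles in $\t$ place $Z$ in $\uu \ast \vv[1]$ and $\uu[-1] \ast \vv$ respectively. For the reverse inclusion I use $\uu, \vv \subseteq \butt(\c) = \x[1] \cap \y$; the co-$t$-structure axioms force $\x$ and $\y$ to be extension-closed (since $Z \in \x$ is equivalent to $\Hom(Z, \y[1]) = 0$ by closure under summands, and dually), and one has $\x \subseteq \x[1]$, $\y[1] \subseteq \y$. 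Therefore $\uu \ast \vv[1] \subseteq \y \ast \y \subseteq \y$ and $\uu[-1] \ast \vv \subseteq \x \ast \x[1] \subseteq \x[1]$, so the intersection sits in $\x[1] \cap \y = \butt(\c)$.

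For (ii), the inclusion $\uu \ast \vv[1] \subseteq \s_\c \ast \s_\c[1] \ast \s_\c[2]$ follows by substituting $\uu \subseteq \s_\c \ast \s_\c[1]$ and $\vv[1] \subseteq \s_\c[1] \ast \s_\c[2]$ and collapsing $\s_\c[1] \ast \s_\c[1]$ to $\s_\c[1]$: any triangle $S_1[1] \to W \to S_2[1] \to S_1[2]$ with $S_i \in \s_\c$ has connecting map in $\Hom(S_2, S_1[1]) = 0$ by silting vanishing, so $W \cong S_1[1] \oplus S_2[1] \in \s_\c[1]$. For the reverse inclusion, given $Z \in \s_\c \ast \s_\c[1] \ast \s_\c[2]$, write it via a triangle $Y \to Z \to T \to Y[1]$ with $Y \in \s_\c \ast \s_\c[1] = \butt(\c)$ and $T \in \s_\c[2]$. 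Completeness applied to $Y$ gives $V \to U \to Y \to V[1]$ with $U \in \uu, V \in \vv$, and the octahedral axiom on $U \to Y \to Z$ produces a triangle $U \to Z \to W \to U[1]$ with $W$ fitting into $V[1] \to W \to T \to V[2]$. By Lemma~\ref{lem:projcon}, $T \in \s_\c[2] \subseteq \vv[1]$, and $\vv[1]$ is closed under extensions in $\t$ (any extension of objects of $\butt(\c)$ remains in the extension-closed subcategory $\butt(\c)$, where $\vv$ is extension-closed by Lemma~\ref{lem:ex}). Hence $W \in \vv[1]$, giving $Z \in \uu \ast \vv[1]$.

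The main obstacle is the bookkeeping around the various closure properties, and in particular verifying that the middle term produced by the octahedral axiom in (ii) actually lies in $\vv[1]$ rather than merely in some larger subcategory of $\t$; this relies on the fact that extensions in $\t$ of objects of $\butt(\c)$ stay inside $\butt(\c)$, which is what lets us transfer the $\et$-level closure of $\vv$ back to the ambient triangulated category.
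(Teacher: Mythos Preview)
Your proof is correct. Part (i) matches the paper's argument almost exactly; the paper phrases the reverse inclusion via the $\s_\c$-decomposition, writing
\[
(\uu[-1]\ast\vv)\cap(\uu\ast\vv[1])\subset(\s_\c[-1]\ast\s_\c\ast\s_\c[1])\cap(\s_\c\ast\s_\c[1]\ast\s_\c[2])\subset\butt(\c),
\]
but this is just your containments $\uu[-1]\ast\vv\subset\x[1]$ and $\uu\ast\vv[1]\subset\y$ expressed through $\s_\c$.

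For the reverse inclusion in (ii) your route diverges from the paper's. The paper reuses (i): since $\butt(\c)[1]\subset\uu\ast\vv[1]$ and $\s_\c\subset\uu$ (Lemma~\ref{lem:projcon}), one gets
\[
\s_\c\ast\s_\c[1]\ast\s_\c[2]=\s_\c\ast\butt(\c)[1]\subset\uu\ast(\uu\ast\vv[1])\subset\uu\ast\vv[1],
\]
using that $\uu\ast\uu\subset\uu$ (as $\uu$ is extension closed in $\butt(\c)$, which is extension closed in $\t$). You instead run an explicit octahedral argument on a single object. Your version is more hands-on and self-contained, while the paper's exploits (i) to avoid the octahedral axiom entirely; both rest on the same underlying fact that $\uu$ and $\vv$ are extension closed inside the extension-closed subcategory $\butt(\c)$, which you correctly isolate as the key point when verifying $W\in\vv[1]$.
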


\begin{proof}
	By the completeness of the cotorsion pair $(\uu,\vv)$, we have $$\butt(\c)\subset(\uu[-1]\ast\vv)\cap(\uu\ast\vv[1]).$$ Conversely, $$(\uu[-1]\ast\vv)\cap(\uu\ast\vv[1])\subset(\s_\c[-1]\ast\s_\c\ast\s_\c[1])\cap(\s_\c\ast\s_\c[1]\ast\s_\c[2])\subset\butt(\c).$$ This proves (i).
	
	For (ii), the inclusion $\uu\ast\vv[1]\subset\s_\c\ast\s_\c[1]\ast\s_\c[2]$ follows direct from $\uu,\vv\subset\s_\c\ast\s_\c[1]$. For the converse inclusion, it follows from part (i) that $\butt(\c)\cap\butt(\c)[1]\subset\uu\ast\vv[1]$. So we have
	$\s_\c\ast\s_\c[1]\ast\s_\c[2]\subset\butt(\c)\ast\butt(\c)[1]\subset\uu\ast\vv[1].$
\end{proof}

\begin{proposition}[{\cite[Theorem 2.1]{PZ}}]\label{prop:cot-cotor}
	The map
	$$\c' =(\x', \y') \mapsto (\butt(\c')[-1]\cap\butt(\c),\butt(\c')\cap\butt(\c))$$
	is a bijection between bounded co-$t$-structures $\c'$ with $\x \subset\x' \subset\x[1]$ and complete cotorsion pairs in $\butt(\c)$, with inverse
	$$(\uu,\vv)\mapsto (\x[-1]\ast\uu,\vv\ast\y[2]).$$
\end{proposition}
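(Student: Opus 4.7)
The proof proceeds by combining the bijections of Propositions~\ref{prop:msss} and~\ref{prop:ijy} with Lemma~\ref{lem:cap}, then verifying directly that the formula $(\x[-1]\ast\uu,\vv\ast\y[2])$ provides an inverse. For the forward direction, by Proposition~\ref{prop:ijy}, bounded co-$t$-structures $\c'=(\x',\y')$ with $\x\subset\x'\subset\x[1]$ correspond bijectively to silting subcategories $\s\subseteq\butt(\c)$ via $\c'\mapsto\s_{\c'}=\x'\cap\y'$. By Lemma~\ref{lem:cap}, each such silting yields a complete cotorsion pair $(\x_\s\cap\butt(\c),\y_\s\cap\butt(\c))$ in $\butt(\c)$, which by Lemma~\ref{lem:short} combined with Lemma~\ref{lem:ijy} equals $(\butt(\c')[-1]\cap\butt(\c),\butt(\c')\cap\butt(\c))$ when $\s=\s_{\c'}$.

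For the inverse, let $(\uu,\vv)$ be a complete cotorsion pair in $\butt(\c)$ and set $\x':=\x[-1]\ast\uu$, $\y':=\vv\ast\y[2]$. The inclusion $\x\subset\x'$ follows from $\s_\c\subset\uu$ (Lemma~\ref{lem:projcon}) combined with the bounded decomposition $\x=\x[-1]\ast\s_\c$ derived from Proposition~\ref{prop:msss}; dually $\y[1]\subset\y'$ follows from $\s_\c[1]\subset\vv$ combined with $\y[1]=\s_\c[1]\ast\y[2]$. Closure of $\x'$ under direct summands follows from $\Hom(\x[-1],\uu[1])\subset\Hom(\x,\y[1])=0$ (using $\uu\subset\butt(\c)\subset\y$), and similarly for $\y'$ from $\Hom(\vv,\y[3])\subset\Hom(\x[1],\y[3])=\Hom(\x,\y[2])=0$, using the iterated vanishing $\Hom(\x,\y[j])=0$ for $j\geq 1$ that follows from $\Hom(\x,\y[1])=0$ and $\x[-1]\subset\x$. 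The axioms $\x'[-1]\subset\x'$, orthogonality $\Hom(\x',\y'[1])=0$, and boundedness are routine consequences of these Hom vanishings together with $\et(\uu,\vv)=0$. The completeness $\t=\x'\ast\y'[1]$ is the main technical point: starting from the $(\x,\y)$-decomposition of $Z\in\t$ and refining via $\x=\x[-1]\ast\s_\c$ and $\y=\s_\c\ast\y[1]$, one applies the octahedral axiom, using $\s_\c\subset\uu$ and $\s_\c[1]\subset\vv$ to place the intermediate pieces in $\uu$ and $\vv[1]$ respectively.

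To establish bijectivity, the key identification is $\s_{\c'}=\uu\cap\vv$ where $\c'=(\x',\y')$. For $Z\in\x'\cap\y'$, the defining triangles $A\to Z\to U$ (with $A\in\x[-1]$, $U\in\uu$) and $V\to Z\to Y$ (with $V\in\vv$, $Y\in\y[2]$) yield, through the induced long exact Hom sequences together with $\Hom(\x,\y[j])=0$ for $j\geq 1$ and $\et(\uu,\vv)=0$, that $\Hom(Z,\vv[1])=0$ and $\Hom(\uu,Z[1])=0$, forcing $Z\in\uu\cap\vv$; the reverse inclusion is trivial. Hence the composition ``inverse then forward'' returns $(\uu,\vv)$ via Lemmas~\ref{lem:cap} and~\ref{lem:short}. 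For the opposite composition, given $\c'$ with associated $(\uu,\vv)$, decomposing each $X\in\x'$ via the refinement $\t=\x[-1]\ast\y$ of the $(\x,\y)$-decomposition and invoking $\x'\cap\butt(\c)=\uu$ (from Lemma~\ref{lem:short}) shows $\x'=\x[-1]\ast\uu$; similarly $\y'=\vv\ast\y[2]$. The chief obstacle throughout is the completeness $\t=\x'\ast\y'[1]$, which demands careful octahedral bookkeeping; once secured, all remaining verifications reduce to the listed Hom vanishings and the earlier lemmas.
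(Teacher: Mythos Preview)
Your forward direction matches the paper's. The main issue is the completeness step $\t=\x'\ast\y'[1]$. Your single refinement gives
\[
\t=\x\ast\y[1]=(\x[-1]\ast\s_\c)\ast(\s_\c[1]\ast\y[2]),
\]
and then $\s_\c\subset\uu$, $\s_\c[1]\subset\vv$ only yields $\t\subset\x[-1]\ast\uu\ast\vv\ast\y[2]=\x'\ast\y'$, not $\x'\ast\y'[1]=\x[-1]\ast\uu\ast\vv[1]\ast\y[3]$. The inclusion $\s_\c[1]\subset\vv$ does \emph{not} place that piece in $\vv[1]$. One more layer of refinement produces the middle term $\s_\c\ast\s_\c[1]\ast\s_\c[2]$, and the non-trivial point is then that this is contained in $\uu\ast\vv[1]$; this is precisely Lemma~\ref{lem:[0,2]}(ii), which genuinely uses the completeness of $(\uu,\vv)$ in $\butt(\c)$ (both the $\Cone$ and $\Cocone$ conditions). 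Your sketch does not supply this, and I do not see how to close the argument by octahedral bookkeeping alone without reproving that lemma.

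A secondary point: for closure of $\x'=\x[-1]\ast\uu$ under direct summands you invoke $\Hom(\x[-1],\uu[1])=0$, but that vanishing only tells you the triangles split, which in a merely idempotent-complete (not Krull--Schmidt) $\t$ does not guarantee summand closure. The vanishing you actually want is $\Hom(\x[-1],\uu)=0$ (which also holds, since $\uu\subset\y$), after which the standard Iyama--Yang argument applies. The paper instead deduces summand closure \emph{after} establishing $\t=\x'\ast\y'[1]$, via a short triangle argument using the orthogonality; either route works once stated correctly.

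Your bijectivity argument via the identification $\x'\cap\y'=\uu\cap\vv$ is a legitimate alternative to the paper's four equations (i)--(iv), and in fact anticipates Corollary~\ref{cor:sil-cotor}; the paper's direct verification of (i)--(iv) is more self-contained but amounts to the same thing.
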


\begin{proof}
	Let $\c'=(\x',\y')$ be a bounded co-$t$-structure. By \ref{prop:msss}, there is a silting subcategory $\s':=\s_{\c'}$ of $\t$ such that $\x'=\x_{\s'}$ and $\y'=\y_{\s'}$. Then by Lemmas~\ref{lem:cap} and \ref{lem:short}, we have that 
	$$(\butt(\c')[-1]\cap\butt(\c),\butt(\c')\cap\butt(\c))=(\x'\cap\y,\y'\cap\x[1])$$ is a complete cotorsion pair in $\butt(\c)$.

	Conversely, let $(\uu,\vv)$ be a complete cotorsion pair in $\butt(\c)$. We divide the proof of $(\x[-1]\ast\uu,\vv\ast\y[2])$ being a co-$t$-structure in $\t$ into the following three steps. 
	\begin{enumerate}
		\item First, since $\uu,\vv\subset\butt(\c)=\x[1] \cap \y$, $\Hom(\uu,\vv[1])=0$, $\x[-1]\subset\x$ and $\y[1]\subset\y$, we have
	\begin{equation}\label{eq:pf}
		\Hom(\x[\leq-1],\uu\cup\vv)=0,\ \Hom(\uu\cup\vv,\y[\geq 2])=0,\ \Hom(\x[\leq -1],\y[\geq 0])=0.
	\end{equation}
	Hence we have $$\Hom(\x[-1]\ast\uu, (\vv\ast\y[2])[1])=0.$$ 
	\item Next, we have 
	\begin{align*}
	\x[-1]\ast\uu\ast(\vv\ast\y[2])[1]) &= \x[-1]\ast\uu\ast\vv[1]\ast\y[3] \\   
	&=\x[-1]\ast\s_\c\ast\s_\c[1]\ast\s_\c[2]\ast\y[3] \\
	&=\t,
	\end{align*}
	where the second equality is due to Lemma~\ref{lem:[0,2]}, and the last equality is due to Proposition~\ref{prop:msss}. 
	\item Finally, for any summand $Z$ of an object in $\x[-1]\ast\uu$, by step (2), there is a triangle
	$$N\to M\to Z\xrightarrow{f} N[1]$$
	with $N\in\vv\ast\y[2]$ and $M\in\x[-1]\ast\uu$. By step (1), $f=0$, which implies that $M\cong N\oplus Z$. So there is a triangle
	$$X[-1]\xrightarrow{\left(\begin{smallmatrix}f_1\\f_2\end{smallmatrix}\right)}N\oplus Z\xrightarrow{\left(\begin{smallmatrix}g_1&g_2\end{smallmatrix}\right)}U\to X$$
	with $X\in\x$ and $U\in\uu$. By \eqref{eq:pf}, we have $f_1=0$. Hence there is a decomposition $U\cong U_1\oplus U_2$ such that $N\cong U_1$ and there is a triangle
	$$X[-1]\xrightarrow{f_2}Z\to U_2\to X.$$
	Note that $\uu$ is closed under direct summands in $\butt(\c)$ by Lemma~\ref{lem:complete}, and $\butt(\c)$ is closed under direct summands in $\t$. So we have $U_2\in\uu$, and hence, $Z\in\x[-1]\ast\uu$. Thus, $\x[-1]\ast\uu$ is closed under direct summands. Similarly, one can show that $\vv\ast\y[2]$ is also closed under direct summands.
	\end{enumerate}
	Thus, $(\x[-1]\ast\uu,\vv\ast\y[2])$ is a co-$t$-structure in $\t$. We claim that $\x \subset \x[-1]\ast\uu \subset \x[1]$. The second inclusion is clear. For the first inclusion, let $X \in \x$ and consider the canonical sequence 
	$$\tilde{X}[-1] \to X \to \tilde{Y} \to \tilde{X} $$
	with $\tilde{X}  \in \x$ and $\tilde{Y}  \in \y$.
	Since $\x$ is extension closed, we have $\tilde{Y}  \in \x\cap\y=\s_\c$. So by Lemma~\ref{lem:projcon}, we have $\tilde{Y}\in\uu$, which gives the second inclusion. Moreover, we have
	$$\t=\bigcup_{n\in\mathbb{Z}}\x[n]\subset\bigcup_{n\in\mathbb{Z}}(\x[-1]\ast\uu)[n].$$
	Similarly, one can show that $\t=\bigcup_{n\in\mathbb{Z}}(\vv\ast\y[2])[n]$. Hence $(\x[-1]\ast\uu,\vv\ast\y[2])$ is a bounded co-$t$-structure of $\t$.
	
	We proceed to show that the maps defined are inverse bijections. First note that we have shown that the first map can be represented as $(\x', \y') \mapsto (\x' \cap \y, \y' \cap \x[1])$. 	We therefore need to prove the following four equations:
	\begin{itemize}
	\item[(i)] $\uu = (\x[-1]\ast \uu) \cap \y$,
	\item[(ii)]  $\vv = (\vv \ast \y[2]) \cap \x[1]$,
	\item[(iii)] $\x' = \x[-1]\ast (\x' \cap \y)$,
	\item[(iv)] $\y' = (\y' \cap \x[1] )\ast \y[2]$.
	\end{itemize}
	We will only prove (i) and (iii). The proofs for (ii) and (iv) are similar.
	
	To prove (i), we note that the inclusion $\uu \supset (\x[-1]\ast \uu) \cap \y$
	follows directly from $\Hom(\x[-1], \y)= 0$ and that $\uu$ is closed under direct summands. The opposite inclusion is from $\uu\subset\butt(\c)\subset\y$.
	
	We then prove (iii). Let $Z$ be an object in $\x'$ and consider the canonical triangle 
	$$X[-1] \to Z \to Y \to X$$
	with $X \in \x \subset \x'$ and $Y \in \y$.  Then since $\x'$ is extension closed, we also have that  $Y \in \x'$, so $\x' \subset \x[-1]\ast (\x' \cap \y)$ holds. The opposite inclusion is from $\Hom(\x[-1], \y)= 0$ and that $\x'\cap\y$ is closed under direct summands, and hence (iii) holds.
\end{proof}

\begin{remark}
In \cite[Theorem~2.1]{PZ}, they take additive hulls of $\x[-1]\ast\uu$ and $\vv\ast\y[2]$. However, under the assumption that $\t$ has split idempotents, this is not necessary.
\end{remark}

Combining the bijection in Proposition~\ref{prop:cot-cotor} and the bijection in Proposition~\ref{prop:ijy}, we have the following consequence.

\begin{corollary}\label{cor:sil-cotor}
	The map
	$$\s\mapsto ((\s[-1]\ast\s)\cap\butt(\c),(\s\ast\s[1])\cap\butt(\c))$$
	is a bijection between silting subcategories of $\t$ in $\butt(\c)$ and complete cotorsion pairs in $\butt(\c)$, with inverse
	$$(\uu,\vv)\mapsto \uu\cap\vv.$$
\end{corollary}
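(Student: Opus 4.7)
The plan is to obtain Corollary~\ref{cor:sil-cotor} simply by composing the two bijections stated just before it: Proposition~\ref{prop:ijy} gives a bijection between silting subcategories $\s$ of $\t$ contained in $\butt(\c)$ and bounded co-$t$-structures $\c' = (\x',\y')$ of $\t$ with $\x \subset \x' \subset \x[1]$, while Proposition~\ref{prop:cot-cotor} gives a bijection between such co-$t$-structures $\c'$ and complete cotorsion pairs in $\butt(\c)$. So all that remains is to verify that the composite maps agree with the ones claimed in the statement.

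First I would unwind the forward direction. Starting with a silting subcategory $\s \subseteq \butt(\c)$, Proposition~\ref{prop:ijy} produces $\c' = (\x_\s, \y_\s)$ whose co-heart is $\s$ itself, and Proposition~\ref{prop:cot-cotor} then produces the complete cotorsion pair $(\butt(\c')[-1] \cap \butt(\c),\ \butt(\c') \cap \butt(\c))$. By Lemma~\ref{lem:ijy} applied to $\c'$, the extended co-heart is $\butt(\c') = \s \ast \s[1]$, and hence $\butt(\c')[-1] = \s[-1] \ast \s$. Substituting, the cotorsion pair becomes exactly $((\s[-1]\ast\s) \cap \butt(\c),\ (\s\ast\s[1]) \cap \butt(\c))$, as claimed.

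Next I would unwind the inverse direction. Given a complete cotorsion pair $(\uu,\vv)$ in $\butt(\c)$, Proposition~\ref{prop:cot-cotor} produces the co-$t$-structure $\c' = (\x[-1]\ast\uu,\ \vv\ast\y[2])$, and Proposition~\ref{prop:ijy} then returns its co-heart $\ch_{\c'} = (\x[-1]\ast\uu) \cap (\vv\ast\y[2])$, a silting subcategory contained in $\butt(\c) = \x[1] \cap \y$. It therefore suffices to show $\ch_{\c'} = \uu \cap \vv$. For the inclusion $\ch_{\c'} \subseteq \uu \cap \vv$, I would use $\ch_{\c'} \subseteq \x[1] \cap \y$ together with identities (i) and (ii) established in the proof of Proposition~\ref{prop:cot-cotor}, namely $\uu = (\x[-1]\ast\uu) \cap \y$ and $\vv = (\vv\ast\y[2]) \cap \x[1]$. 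For the reverse inclusion $\uu \cap \vv \subseteq \ch_{\c'}$, I would observe that $\uu \subseteq \x[-1]\ast\uu$ (take the trivial triangle $0 \to U \to U \to 0$) and similarly $\vv \subseteq \vv \ast \y[2]$.

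The argument is essentially formal, so no step is especially hard; the only thing requiring a little care is making sure one invokes the correct identities from the proof of Proposition~\ref{prop:cot-cotor} when computing $\ch_{\c'}$, since the description of $\ch_{\c'}$ as $\uu \cap \vv$ is not immediate from the definitions of $\x[-1]\ast\uu$ and $\vv\ast\y[2]$ alone. Once that identification is made, the fact that the two composed assignments are mutually inverse follows from the corresponding inverse property of the two bijections being composed.
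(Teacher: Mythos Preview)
Your proposal is correct and follows exactly the approach the paper takes: the paper states the corollary as a direct consequence of composing the bijections in Proposition~\ref{prop:ijy} and Proposition~\ref{prop:cot-cotor}, without giving further details. Your write-up simply fills in the verification that the composite maps agree with the stated formulas, which is precisely what is needed.
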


The bijection of Proposition \ref{prop:cot-cotor} has the following direct consequence,
which we regard as a HRS-tilting theorem for two-term categories.

\begin{theorem}[{\cite[Corollary 2.4]{PZ}}]\label{thm:hrs}
	Let $(\uu,\vv)$ be a complete cotorsion pair in $\butt(\c)$. Then the intersection $\Cocone(\uu[1],\vv)\cap \Cone(\uu[1],\vv)$ is the extended co-heart of a bounded co-$t$-structure, where	
	$(\vv,\uu[1])$ is a complete cotorsion pair with $\vv\cap\uu[1]=\s_\c[1]$.
\end{theorem}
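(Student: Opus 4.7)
The plan is to take the bounded co-$t$-structure $\tilde\c = (\tilde\x, \tilde\y) := (\x[-1] \ast \uu,\, \vv \ast \y[2])$ produced from $(\uu,\vv)$ by Proposition~\ref{prop:cot-cotor}, which satisfies $\x \subset \tilde\x \subset \x[1]$, and to show that its extended co-heart $\butt(\tilde\c) = \tilde\x[1] \cap \tilde\y$ coincides with the intersection $E := \Cocone(\uu[1], \vv) \cap \Cone(\uu[1], \vv)$, while simultaneously carrying $(\vv, \uu[1])$ as a complete cotorsion pair whose intersection is $\s_\c[1]$.

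The crux is to produce $(\vv, \uu[1])$ as a complete cotorsion pair in $\butt(\tilde\c)$, which I would do by applying Proposition~\ref{prop:cot-cotor} a second time -- now with $\tilde\c$ in the role of the base co-$t$-structure and $\c[1]$ as the auxiliary one. The chain $\x \subset \tilde\x \subset \x[1]$ shifts to $\tilde\x \subset \x[1] \subset \tilde\x[1]$, so the proposition delivers a complete cotorsion pair $(\butt(\c) \cap \butt(\tilde\c),\, \butt(\c)[1] \cap \butt(\tilde\c))$ in $\butt(\tilde\c)$, and one identifies it with $(\vv, \uu[1])$. The containments $\vv \subset \butt(\c)$ and $\uu[1] \subset \butt(\c)[1]$ are automatic; the inclusions $\vv, \uu[1] \subset \butt(\tilde\c)$ are seen by decomposing any $V \in \vv$ or $U \in \uu$ through $\butt(\c) = \s_\c \ast \s_\c[1]$ (Lemma~\ref{lem:ijy}) and invoking $\s_\c \subset \x \cap \uu$ and $\s_\c[1] \subset \vv \cap \uu[1]$ from Lemma~\ref{lem:projcon} to land in $\tilde\x[1] \cap \tilde\y$. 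The reverse inclusions use orthogonality: for $Z \in \butt(\c) \cap \butt(\tilde\c)$, membership $Z \in \tilde\y$ forces $\Hom(\tilde\x, Z[1]) = 0$, and since $\uu \subset \tilde\x$ we conclude $\Hom(\uu, Z[1]) = 0$, hence $Z \in \vv$; the argument for $\uu[1]$ is dual.

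Once the identification is in place, the equality $\vv \cap \uu[1] = \s_\c[1]$ drops out as
\[
\vv \cap \uu[1] \;=\; \butt(\c) \cap \butt(\c)[1] \cap \butt(\tilde\c) \;=\; (\x[1] \cap \y[1]) \cap \butt(\tilde\c) \;=\; \s_\c[1],
\]
using $\s_\c[1] \subset \vv \subset \butt(\tilde\c)$. For $E = \butt(\tilde\c)$, the inclusion $\butt(\tilde\c) \subset E$ is exactly the completeness of $(\vv, \uu[1])$ in $\butt(\tilde\c)$, after rewriting $\Cone(\uu[1],\vv) = \vv \ast \uu[2]$ and $\Cocone(\uu[1],\vv) = \vv[-1] \ast \uu[1]$ via Remark~\ref{exm:tri}. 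The inclusion $E \subset \butt(\tilde\c) = \tilde\x[1] \cap \tilde\y$ is read off directly from the witnessing triangles: any $X \in \vv \ast \uu[2]$ lies in $\vv \ast \y[2] = \tilde\y$ because $\uu[2] \subset \butt(\c)[2] \subset \y[2]$, and any $X \in \vv[-1] \ast \uu[1]$ lies in $\x \ast \uu[1] = \tilde\x[1]$ because $\vv[-1] \subset \butt(\c)[-1] \subset \x$.

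The main obstacle I expect is the second application of Proposition~\ref{prop:cot-cotor} together with the subsequent matching of the resulting cotorsion pair with $(\vv, \uu[1])$; once the containments $\uu \subset \tilde\x$ and $\vv \subset \tilde\y$ have been exploited through co-$t$-structure orthogonality, the remaining statements of the theorem reduce to unfolding the star-decompositions already set up earlier in the section.
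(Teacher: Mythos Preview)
Your proposal is correct and follows essentially the same strategy as the paper: apply the co-$t$-structure/cotorsion-pair correspondence a second time, now with the roles of $\c$ and the newly produced structure reversed. The paper phrases this on the silting side via Corollary~\ref{cor:sil-cotor} (setting $\s=\uu\cap\vv$ and recognising $\uu[1]=\butt(\s_\c[1])\cap\butt(\s)$, $\vv=\butt(\s_\c[1])[-1]\cap\butt(\s)$) and then invokes Lemma~\ref{lem:[0,2]}~(i) for the identification $\butt(\s)=\Cocone(\uu[1],\vv)\cap\Cone(\uu[1],\vv)$, whereas you stay on the co-$t$-structure side via Proposition~\ref{prop:cot-cotor} and verify that last identification by hand; the content is the same.
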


\begin{proof}
	Let $\s=\uu\cap\vv$. Then by Corollary~\ref{cor:sil-cotor}, $\s$ is silting,  $\uu=\butt(\s)[-1]\cap\butt(\c)$ and $\vv=\butt(\s)\cap\butt(\c)$. Then $\uu[1]=\butt(\c)[1]\cap\butt(\s)=\butt(\s_\c[1])\cap\butt(\s)$ and $\vv=\butt(\s_\c[1])[-1]\cap\butt(\s)$. Hence by Corollary~\ref{cor:sil-cotor} again, $(\vv,\uu[1])$ is a complete cotorsion pair in $\butt(\s)$ with $\vv\cap\uu[1]=\s_\c[1]$.
	
	By Lemma~\ref{lem:ijy}, $\butt(\s)$ is the extended co-heart of the bounded co-$t$-structure induced by $\s$. Then by Lemma~\ref{lem:[0,2]}, we have $\butt(\s)=\Cocone(\uu[1],\vv)\cap \Cone(\uu[1],\vv)$ and we are done.
\end{proof}

%\begin{definition}
%	
%\end{definition}	
%
%\begin{theorem}
%
%\end{theorem}

\section{The Brenner-Butler theorem in two-term categories}

We now return to the setting of a finite dimensional algebra $\alg$. 
Recall that the classical Brenner-Butler theorem \cite{BB} for module categories, says that
a classical tilting module $T$ in $\mod A$ with endomorphism ring $B$, gives rise to torsion pairs  
$(\T_A, \F_A)$ in $\mod A$ and $(\T_B, \F_B)$ in $\mod B$, with natural equivalences
$\T_A \simeq \F_B$ and $\T_B \simeq \F_A$. In \cite{BZ}, we gave a version of this for 2-term silting complexes,
still formulated in terms of torsion pairs in module categories.
In this section we provide an alternative approach, but now in the setting of cotorsion pairs for two-term categories.

For a two-term silting complex $\pp$, we have seen in the previous section that the subcategories of $\butt(\alg)$ given by
$$\uu(\pp)=(\add\pp[-1]\ast\add\pp)\cap\butt(\alg),\ \vv(\pp)=(\add\pp\ast\add\pp[1])\cap\butt(\alg)$$
is a complete cotorsion pair in $\butt(\alg)$. 
In the spirit of the Brenner-Butler theorem, we shall compare these to a corresponding complete cotorsion pair in the two-term category of the endomorphism ring of $\pp$.

Let $\per\alg$ be the prefect category of $A$, which is the same as the bounded homotopy category of projectives. Let $\butt(\alg):=\butt(\add\alg)$ be the two-term category sitting inside as an extension-closed subcategory.

Let $\pp$ be a two-term silting complex in $\per \alg$, i.e. $\add\pp$ is a silting subcategory of $\butt(\alg)$. 
Let $\salg=\End_{\per \alg}(\pp)$ and let $\salg^{dg}$ be the differential graded endomorphism algebra of $\pp$. Then $\salg^{dg}$ is a non-positive differential graded algebra with $H^0(\salg^{dg})=\salg$. Let $p:\salg^{dg}\to\salg$ be the canonical projection. Then its induction functor $p_\ast:\per\salg^{dg}\to\per\salg$ restricts to an additive equivalence $\add_{\per\salg^{dg}}\salg^{dg}\simeq\add_{\per\salg}\salg$. Combining $p_\ast$ with the equivalence $\per \salg^{dg}\simeq\per\alg$ sending $\salg^{dg}$ to $\pp$ given in \cite{K}, we get a triangle functor
$$\Psi\colon\per\alg\to\per\salg,$$
which restricts to an additive equivalence $\add_{\per\alg}\pp\simeq\add_{\per\salg}\salg.$

We can now give a precise formulation of the Brenner-Butler theorem for two-term categories. Note that (a) is from Corollary~\ref{cor:sil-cotor}, and (b) is \cite[Theorem~1.1~(c)]{BZ}.

\begin{theorem}\label{thm:BB}
    Let $\pp$ be a two-term silting complex in $\per \alg$ and let $\salg=\End_{\per \alg}(\pp)$.
    \begin{enumerate}
        \item[(a)] The pair $(\uu(\pp),\vv(\pp))$ is a complete cotorsion pair in $\butt(\alg)$ such that $$\uu(\pp)\cap\vv(\pp)=\add\pp.$$
        Conversely, any complete cotorsion pair in $\butt(\alg)$ arises in this way.
        \item[(b)] $\qq:=\Psi(\alg[1])$ is a two-term silting complex in $\per\alg$.
    \end{enumerate}
    Let $\m(\pp)=\Psi(\vv(\pp))$ and $\n(\pp)=\Psi(\uu(\pp)[1])$.
    \begin{enumerate}
        \item[(c)] The pair $(\m(\pp),\n(\pp))$ is a complete cotorsion pair in $\butt(\salg)$.
        \item[(d)] We have the equation 
        $$(\m(\pp),\n(\pp))=(\uu(\qq),\vv(\qq)).$$
        \item[(e)] There are equivalences $$\m(\pp)\simeq\vv(\pp)/(\pp[1],\pp),\ \n(\pp)\simeq\uu(\pp)/(\pp,\pp[-1]),$$ induced by $\Psi$, where $(X,Y)$ is the ideal consisting of morphisms which factors through $\add X$ and $\add Y$ in order.
    \end{enumerate}
\end{theorem}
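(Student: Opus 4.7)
The plan is to exploit the HRS-tilting theorem (Theorem~\ref{thm:hrs}) together with the observation that $\Psi$ restricts to a triangle functor $\butt(\add\pp) \to \butt(\salg)$ which is essentially surjective and bijective on isoclasses: by Lemma~\ref{lem:ijy}, $\butt(\add\pp) = \add\pp \ast \add\pp[1]$, and objects in both $\butt(\add\pp)$ and $\butt(\salg)$ are classified up to isomorphism by $\salg$-valued matrices via the equivalence $\Psi|_{\add\pp}\colon \add\pp \simeq \add\salg$. Two further compatibilities will be used throughout: $\Psi(\add\alg) = \add\qq[-1]$ and $\Psi(\add\alg[1]) = \add\qq$, which give $\Psi(\butt(\alg)) \subseteq \butt(\qq)[-1]$ and $\Psi(\butt(\alg)[1]) \subseteq \butt(\qq)$.

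For (c) and (d) I would first apply Theorem~\ref{thm:hrs} to the complete cotorsion pair $(\uu(\pp),\vv(\pp))$ in $\butt(\alg)$, producing a complete cotorsion pair $(\vv(\pp),\uu(\pp)[1])$ in $\butt(\add\pp)$ with intersection $\add\alg[1]$. Applying $\Psi$ transfers the defining $\et$-triangles; closure of $\m(\pp)$ and $\n(\pp)$ under direct summands follows by lifting summand decompositions back to $\butt(\add\pp)$ via essential surjectivity and uniqueness of preimages, using that $\vv(\pp), \uu(\pp)[1]$ are themselves summand-closed. The compatibilities above yield $\m(\pp) \subseteq \butt(\salg) \cap \butt(\qq)[-1] = \uu(\qq)$ and $\n(\pp) \subseteq \butt(\salg) \cap \butt(\qq) = \vv(\qq)$, while the intersection $\m(\pp) \cap \n(\pp)$ is sandwiched between $\Psi(\vv(\pp) \cap \uu(\pp)[1]) = \Psi(\add\alg[1]) = \add\qq$ and $\uu(\qq) \cap \vv(\qq) = \add\qq$. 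Hence $(\m(\pp),\n(\pp))$ is a complete cotorsion pair in $\butt(\salg)$ with middle $\add\qq$, and by the uniqueness in Corollary~\ref{cor:sil-cotor} it coincides with $(\uu(\qq),\vv(\qq))$, establishing (d) simultaneously.

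For (e) the functor $\bar{\Psi}\colon \vv(\pp) \to \m(\pp)$ is essentially surjective by construction. Fullness holds because any morphism $\Psi X \to \Psi Y$ in $\butt(\salg)$ is a chain map between 2-term $\add\salg$-presentations modulo homotopy, whose components lift via $\Psi|_{\add\pp}$ to a chain map between $\add\pp$-presentations, inducing a morphism $X \to Y$ in $\vv(\pp)$. The indeterminacy in completing such a chain map to an actual morphism of mapping cones is the image of $\Hom(P_1[1], Y) \to \Hom(X, Y)$ arising from the decomposition triangle $P_1 \to P_0 \to X$; computing $\Hom(P_1[1], Y)$ via the long exact sequence attached to $Q_1 \to Q_0 \to Y$, the part killed by $\Psi$ comes precisely from $\Hom_{\per\alg}(\pp,\pp[-1])$-contributions, embedded in $\Hom(X,Y)$ as morphisms of the form $X \to P_1[1] \to Q_0 \to Y$. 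One inclusion of the kernel in $(\pp[1],\pp)$ is then immediate, since any map factoring as $X \to \pp[1]^n \to \pp^m \to Y$ has middle component in $\Hom_{\per\salg}(\salg[1]^n,\salg^m) = 0$ after $\Psi$; the opposite inclusion is the content of the long exact sequence analysis above, giving $\vv(\pp)/(\pp[1],\pp) \simeq \m(\pp)$. The symmetric argument using $\uu(\pp) = \add\pp[-1] \ast \add\pp$ yields $\n(\pp) \simeq \uu(\pp)/(\pp,\pp[-1])$. The main obstacle is this kernel identification, which requires carefully tracking through the long exact sequences and pinpointing the $\Hom(\pp,\pp[-1])$-contributions as exactly the compositions through $\add\pp[1]$ and $\add\pp$.
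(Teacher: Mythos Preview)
Your route to (c) and (d) coincides with the paper's: apply Theorem~\ref{thm:hrs} to obtain the complete cotorsion pair $(\vv(\pp),\uu(\pp)[1])$ in $\butt(\add\pp)$, push it through $\Psi$, compute the intersection as $\add\qq$, and identify via Corollary~\ref{cor:sil-cotor}. The paper packages the transport step as Proposition~\ref{prop:red}, which rests on Lemma~\ref{lem:BY} (imported from \cite[Appendix~A]{BY}); your variant replaces the $\Hom$-isomorphism of Lemma~\ref{lem:BY}(2) by the sandwich $\m(\pp)\subseteq\uu(\qq)$, $\n(\pp)\subseteq\vv(\qq)$ to get $\et$-orthogonality, which is a small but genuine shortcut. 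You still need the bijection on isoclasses between $\butt(\pp)$ and $\butt(\salg)$ (for summand closure and for the $\Cone$/$\Cocone$ conditions to exhaust $\butt(\salg)$), and your one-line justification via ``$\salg$-valued matrices'' is exactly the content of Lemma~\ref{lem:BY}(1), which is less immediate than it sounds.

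The real divergence is in (e). The paper does not argue here at all: it simply restricts the equivalence $\butt(\pp)/(\pp[1],\pp)\simeq\butt(\salg)$ of Lemma~\ref{lem:BY}(1) to $\vv(\pp)$ and to $\uu(\pp)[1]$. Your long exact sequence analysis is thus an attempt to \emph{reprove} that lemma (equivalently \cite[Proposition~A.5]{BY}), and here the sketch is incomplete. Fullness is not automatic: after lifting the degreewise components of a chain map through $\Psi|_{\add\pp}$, the induced fill-in $X\to Y$ on cones is not unique, so you must still explain why $\Psi$ of your chosen fill-in recovers the original morphism in $\butt(\salg)$. For the kernel, the easy inclusion $(\pp[1],\pp)\subseteq\ker\Psi$ is fine, but the reverse inclusion does not drop out of the long exact sequence as you describe: from $\Psi(f)=0$ one must actually produce a factorisation $X\to P'[1]\to P''\to Y$ with $P',P''\in\add\pp$, not merely exhibit $f$ in the image of $\Hom(P_1[1],Y)\to\Hom(X,Y)$, and this is precisely the non-formal step carried out in \cite[Proposition~A.5]{BY} that your outline does not supply.
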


\begin{remark}\label{rmk:fac}
    By Theorem~\ref{thm:BB}~(a), for $A[1]\in\butt(A)$, there is a canonical triangle
    \begin{equation}\label{eq:can}
         A\to V\to U\to A[1]
    \end{equation}
    with $U\in\uu(\pp)$ and $V\in\vv(\pp)$. By Lemma~\ref{lem:projcon}, we have $A\in\uu(\pp)$ and $A[1]\in\vv(\pp)$. Hence both $U$ and $V$ are in $\uu(\pp)\cap\vv(\pp)=\add\pp$. Thus, the triangle \eqref{eq:can} is the one in \cite[Theorem~1.1~(b)]{BZ}. Consequently, any morphism from an object in $\uu(\pp)$ to an object in $\add A[1]$ factors through $\uu(\pp)\cap\vv(\pp)$.
\end{remark}

To show this theorem, we need some preparations.

\begin{lemma}\label{lem:BY}
Let $\butt(\pp)=\butt(\add\pp)$.
\begin{enumerate}
\item $\Psi$ induces an equivalence 
$$\butt(\pp)/(\pp[1],\pp)\simeq\butt(\salg).$$
In particular, $\Psi$ induces a bijection between isoclasses of objects of $\butt(\pp)$ and those of $\butt(\salg)$.
\item For any objects $X$ and $Y$ of $\butt(\pp)$, we have an isomorphism of vector spaces
$$\Hom_{\per\alg}(X,Y[1])\cong\Hom_{\per\salg}(\Psi(X),\Psi(Y)[1]).$$
\end{enumerate}
\end{lemma}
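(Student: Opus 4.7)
Plan:

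Fix $X,Y\in\butt(\pp)=\add\pp\ast\add\pp[1]$ with their defining triangles $P_1^X\to P_0^X\to X\to P_1^X[1]$ and $P_1^Y\to P_0^Y\to Y\to P_1^Y[1]$ in $\add\pp$. Since $\Psi$ is a triangle functor whose restriction $\Psi|_{\add\pp}$ is an additive equivalence onto $\add\salg$ (sending $\pp$ to $\salg$), applying $\Psi$ yields the corresponding presentations of $\Psi X,\Psi Y$ in $\butt(\salg)=\add\salg\ast\add\salg[1]$. The plan for both parts is to compute the relevant $\Hom$-groups by nested long exact sequences---first applying $\Hom(-,Y[n])$ to the $X$-triangle, then applying $\Hom(P_i^X,-)$ to the $Y$-triangle---and compare term by term with the parallel computation in $\per\salg$ via $\Psi|_{\add\pp}$.

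For (2), the silting conditions $\Hom(\pp,\pp[k])=0$ and $\Hom(\salg,\salg[k])=0$ for $k\ge 1$ (the latter because $\salg$ sits in degree zero in $\per\salg$) collapse all the relevant long exact sequences. Concretely, $\Hom(P,Y[k])=0$ for every $P\in\add\pp$ and $k\ge 1$, so applying $\Hom(-,Y[1])$ to the $X$-triangle gives
\[ \Hom_{\per\alg}(X,Y[1])\;\cong\;\coker\bigl(\Hom(P_0^X,Y)\to\Hom(P_1^X,Y)\bigr), \]
and a second long exact sequence identifies each $\Hom(P_i^X,Y)\cong\coker\bigl(\Hom(P_i^X,P_1^Y)\to\Hom(P_i^X,P_0^Y)\bigr)$. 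The identical argument runs in $\per\salg$, producing the analogous double cokernel of $\add\salg$-morphisms; since $\Psi|_{\add\pp}$ gives isomorphisms on all degree-zero $\add\pp$-Hom-spaces, we conclude.

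For (1), essential surjectivity is immediate: any $W\in\butt(\salg)$ has a presentation $Q_1\to Q_0\to W$, and lifting the differential through $\Psi|_{\add\pp}^{-1}$ and taking the cone in $\per\alg$ produces an object of $\butt(\pp)$ mapping to $W$. For the morphism-level statement, the same nested long exact sequence applied to $\Hom(-,Y)$ no longer collapses, since $\Hom(P_1^X[1],Y)\cong\Hom(P_1^X,Y[-1])$ may not vanish. One obtains instead a short exact sequence
\[ 0\to \coker\bigl(\Hom(P_0^X,Y[-1])\to\Hom(P_1^X,Y[-1])\bigr)\to \Hom_{\per\alg}(X,Y)\to \ker\bigl(\Hom(P_0^X,Y)\to\Hom(P_1^X,Y)\bigr)\to 0, \]
together with its parallel in $\per\salg$. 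The right-hand kernels are matched by $\Psi$ using the computation of (2). The left-hand cokernel vanishes on the $\salg$-side (since $\Hom(\salg,\salg[-1])=0$) but not on the $\alg$-side, where it is fed by $\Hom(\pp,\pp[-1])$-type terms; these are precisely the classes killed by $\Psi$, hence form the kernel of $\Psi\colon\Hom_{\per\alg}(X,Y)\to\Hom_{\per\salg}(\Psi X,\Psi Y)$.

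The main obstacle is identifying this kernel with the restriction of the ideal $(\pp[1],\pp)$ to $\Hom(X,Y)$. The connecting map $\Hom(P_1^X[1],Y)\to\Hom(X,Y)$, given by precomposition with the canonical $X\to P_1^X[1]$, realises classes of the kernel as morphisms factoring through $P_1^X[1]\in\add\pp[1]$; a further decomposition of $P_1^X[1]\to Y$ via the $Y$-triangle should exhibit each such morphism as an explicit factorisation $X\to P_1^X[1]\to P_0^Y\to Y$, matching the $\add\pp[1]$-then-$\add\pp$ pattern defining $(\pp[1],\pp)$. Once this identification is in place, $\Psi$ descends to a fully faithful functor on the quotient $\butt(\pp)/(\pp[1],\pp)$, which together with essential surjectivity yields the desired equivalence and, in particular, the bijection on isoclasses of objects.
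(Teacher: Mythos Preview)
Your argument for part~(2) is correct and is essentially what underlies Br\"ustle--Yang's computation: the vanishing $\Hom(\pp,\pp[k])=0$ for $k\ge 1$ (and $\Hom(\salg,\salg[k])=0$ for $k\ge 1$) collapses both nested long exact sequences so that $\Hom(X,Y[1])$ is expressed entirely in terms of degree-zero $\add\pp$-morphisms, which $\Psi$ matches. The paper itself simply cites \cite[Appendix~A]{BY} rather than spelling this out, so your treatment here is more self-contained.

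For part~(1), however, there is a genuine error. You claim that on the $\salg$-side the left-hand cokernel
\[
\coker\bigl(\Hom_{\per\salg}(\Psi P_0^X,\Psi Y[-1])\to\Hom_{\per\salg}(\Psi P_1^X,\Psi Y[-1])\bigr)
\]
vanishes ``since $\Hom(\salg,\salg[-1])=0$''. But $\Psi Y$ is not in $\add\salg$; it lies in $\butt(\salg)=\add\salg\ast\add\salg[1]$. For $Q\in\add\salg$ and $W\in\butt(\salg)$ with presentation $Q_1^W\to Q_0^W\to W$, the long exact sequence gives
\[
\Hom_{\per\salg}(Q,W[-1])\;\cong\;\ker\bigl(\Hom(Q,Q_1^W)\to\Hom(Q,Q_0^W)\bigr),
\]
which is typically nonzero (e.g.\ take $W=Q'[1]$). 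Consequently $L_\salg$ need not vanish, and $\ker\Psi$ on $\Hom(X,Y)$ is not $L_\alg$ but rather the kernel of the induced map $L_\alg\to L_\salg$. Your subsequent claim that every element of $\ker\Psi$ factors as $X\to P_1^X[1]\to P_0^Y\to Y$ also fails: a map $g\colon P_1^X[1]\to Y$ factors through $P_0^Y$ only when its image in $\Hom(P_1^X,P_1^Y)$ under $Y\to P_1^Y[1]$ vanishes, which is not automatic.

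The argument can be repaired, but it requires a further layer of analysis: one must compute $\Hom_A(P_i^X,Y[-1])$ via the $Y$-triangle, compare with the $\salg$-side, and track the contribution of the $\Hom(\pp,\pp[-1])$-terms through a second application of the snake lemma before one can identify $\ker(L_\alg\to L_\salg)$ with the ideal $(\pp[1],\pp)$. As written, the crucial step is simply asserted on the basis of an incorrect vanishing. The paper avoids this by invoking \cite[Proposition~A.5]{BY}, which establishes the equivalence $\butt(\salg^{dg})/(\salg^{dg}[1],\salg^{dg})\simeq\butt(\salg)$ directly and then transports it along the triangle equivalence $\per\alg\simeq\per\salg^{dg}$.
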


\begin{proof}
This is essentially from \cite[Appendix A]{BY}. By \cite[Proposition~A.5]{BY}, $p_\ast$ induces an equivalence
$$\butt(\salg^{sg})/(\salg^{sg}[1],\salg^{sg})\simeq\butt(\salg).$$
So we have (1). The equivalence in (2) follows from the proof of \cite[Proposition~A.3]{BY} after replacing $X$ (resp. $X'$, $X''$) with $Y$ (resp. $Y'$, $Y''$) in the second position of each $\Hom(-,-)$ there.
\end{proof}

%\begin{lemma}
%The functor $\Psi$ in Lemma~\ref{lem:BY} induces an equivalence
%\end{lemma}

%\begin{proof}
%which restricts to an equivalence $$(\add \salg^{dg}\ast\add \salg^{dg}[1])_{\per \salg^{dg}}/(\s[1],\s)\simeq (\add \salg\ast\add \salg[1])_{K^b(\salg)}.$$ 
%\end{proof}

\begin{remark}\label{rmk:sil}
    Taking $Y=X$ in the formula in Lemma~\ref{lem:BY}~(3), one can get \cite[Proposition~A.3]{BY}, that $\Psi$ induces a bijection from isomorphism classes of silting complexes in $\butt(\pp)$ to those in $\butt(\salg)$. %Note that the silting complex $\Psi(\alg[1])$ in $\butt(\salg)$ is the one in \cite[Theorem~1.1~(c)]{BZ}.
	%Note that the equivalence in Lemma~\ref{lem:BY} induces a bijection between silting complexes in the categories on both sides. This bijection is also given directly by the map $\salg^{dg}\to \salg$. See \cite[Proposition~A.3]{BY}.
\end{remark}

The equivalence in Lemma~\ref{lem:BY}~(1) shows that in general $\butt(\pp)$ is not equivalent to $\butt(\salg)$. However, we have the following correspondence between cotorsion pairs in these two categories.

%Inspired by the above remark, we shall prove the following.

\begin{proposition}\label{prop:red}
	There is a bijection 
	$$(\m,\n)\mapsto(\Psi(\m),\Psi(\n))$$
	between cotorsion pairs in $\butt(\pp)$ and cotorsion pairs in $\butt(\salg)$, which restricts to a bijection between complete cotorsion pairs in $\butt(\pp)$ and complete cotorsion pairs in $\butt(\salg)$.
\end{proposition}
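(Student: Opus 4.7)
The plan is to derive the bijection directly from Lemma~\ref{lem:BY}, which supplies both a bijection on isoclasses and an isomorphism of $\et$-bifunctors between $\butt(\pp)$ and $\butt(\salg)$. Since a cotorsion pair in an extriangulated category is characterized purely by $\et$-vanishing together with closure under isomorphism and direct summands (cf. Lemma~\ref{lem:ex}), these two pieces of structural data are enough to transport cotorsion pairs in either direction.

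Concretely, for a cotorsion pair $(\m,\n)$ in $\butt(\pp)$ I define $\Psi(\m)$ to be the full subcategory of $\butt(\salg)$ of objects isomorphic to $\Psi(M)$ for some $M\in\m$, and similarly $\Psi(\n)$. First I would check closure under summands: a splitting $\Psi(M)\cong N_1\oplus N_2$ in $\butt(\salg)$ lifts via Lemma~\ref{lem:BY}(1) to a splitting $M\cong M_1\oplus M_2$ in $\butt(\pp)$ with $\Psi(M_i)\cong N_i$, and then $M_i\in\m$ by summand-closure of $\m$, giving $N_i\in\Psi(\m)$. Next, if $X\in\butt(\salg)$ satisfies $\et(\Psi(\m),X)=0$, I would lift $X\cong\Psi(X')$ and use the isomorphism of Lemma~\ref{lem:BY}(2) to conclude $\et(\m,X')=0$, so $X'\in\n$ and $X\in\Psi(\n)$; the reverse implication and the dual condition on $\Psi(\n)$ are symmetric. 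This produces a cotorsion pair $(\Psi(\m),\Psi(\n))$ in $\butt(\salg)$. The inverse map assigns to $(\tilde\m,\tilde\n)$ the pair of essential preimages in $\butt(\pp)$, and the two constructions are inverse by the isoclass bijection.

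For the restriction to complete cotorsion pairs the additional ingredient is that $\Psi$ is a triangle functor. In one direction, applying $\Psi$ to an $\et$-triangle $N\to M\to X$ in $\butt(\pp)$ with $M\in\m$, $N\in\n$ yields a triangle in $\per\salg$ whose three vertices lie in $\butt(\salg)$, hence an $\et$-triangle realising $X$ as $\Cone(\Psi(\n),\Psi(\m))$; dually for cocones. For the reverse, given $\tilde N\to \tilde M\to \tilde X$ in $\butt(\salg)$ with $\tilde M\in\Psi(\m)$, $\tilde N\in\Psi(\n)$, I would pick lifts $N\in\n$, $X\in\butt(\pp)$ with $\Psi(N)\cong \tilde N$, $\Psi(X)\cong \tilde X$, pull the connecting class in $\et(\tilde X,\tilde N)$ back through Lemma~\ref{lem:BY}(2) to a class in $\et(X,N)$, and complete it to a triangle $N\to M'\to X$ in $\per\alg$. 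Extension-closure of $\butt(\pp)$ forces $M'\in\butt(\pp)$; applying $\Psi$ recovers a triangle isomorphic to the original, so $\Psi(M')\cong\tilde M$ and hence $M'\in\m$ by the isoclass bijection. This proves $\butt(\pp)=\Cone(\n,\m)=\Cocone(\n,\m)$ if and only if $\butt(\salg)=\Cone(\Psi(\n),\Psi(\m))=\Cocone(\Psi(\n),\Psi(\m))$.

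I expect the main point requiring care to be precisely this lift of $\et$-triangles. Lemma~\ref{lem:BY}(1) identifies morphisms in $\butt(\pp)$ only modulo the ideal $(\pp[1],\pp)$, which would be a problem if we tried to lift arbitrary morphisms; however Lemma~\ref{lem:BY}(2) gives the stronger statement that $\Psi$ is a genuine isomorphism on $\et$-bifunctors, and since extriangles are classified by $\et$-classes this is exactly what is needed. Once the connecting morphism is lifted and completed inside $\per\alg$, extension-closure of $\butt(\pp)$ handles the middle term and the rest is routine bookkeeping with the isoclass bijection.
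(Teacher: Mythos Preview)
Your argument for the first bijection (ordinary cotorsion pairs) is essentially the paper's own proof, just spelled out in more detail: both use the isoclass bijection from Lemma~\ref{lem:BY}(1) together with the $\et$-isomorphism of Lemma~\ref{lem:BY}(2) to transport the defining orthogonality conditions.

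For the restriction to complete cotorsion pairs your route genuinely diverges from the paper's. The paper handles only the forward direction by hand (via $\Psi$ being a triangle functor, as you also do) and then obtains the reverse direction indirectly: it invokes the bijection between silting subcategories in $\butt(\pp)$ and in $\butt(\salg)$ from \cite[Proposition~A.3]{BY} and composes with Corollary~\ref{cor:sil-cotor} on both sides to conclude that every complete cotorsion pair in $\butt(\salg)$ is hit. Your approach instead lifts $\et$-triangles directly, using that Lemma~\ref{lem:BY}(2) is an isomorphism on connecting classes and that $\butt(\pp)$ is extension-closed in $\per\alg$. This is more elementary and self-contained: it avoids appealing to Corollary~\ref{cor:sil-cotor} (which rests on the nontrivial Proposition~\ref{prop:cot-cotor}) and to the silting bijection. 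The paper's approach, on the other hand, is shorter once that machinery is in place and sidesteps any need to verify that the isomorphism in Lemma~\ref{lem:BY}(2) is the one induced by $\Psi$; you implicitly rely on this compatibility when you claim that applying $\Psi$ to the lifted triangle recovers the original, so it would be worth saying a word about why the map in Lemma~\ref{lem:BY}(2) is $\Psi$ itself (it is, by construction in \cite{BY}, but the lemma as stated only asserts an abstract isomorphism).
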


\begin{proof}
    Since by Lemma~\ref{lem:BY}~(1), $\Psi$ induces a bijection between isoclasses of objects of $\butt(\pp)$ and those of $\butt(\salg)$, so by Lemma~\ref{lem:BY}~(2), a pair $(\m,\n)$ of subcategories of $\butt(\s)$ is a cotorsion pair if and ony if $(\Psi(\m),\Psi(\n))$ is a cotorsion pair in $\butt(\salg)$. This gives the first required bijection. 
    
    Since $\Psi$ is a triangle functor, if $(\m,\n)$ is a complete cotorsion pair then so is its image $(\Psi(\m),\Psi(\n))$. By \cite[Proposition~A.3]{BY} (cf. also Remark~\ref{rmk:sil}), the map $\m\cap\n\mapsto\Psi(\m\cap\n)$ is a bijection between silting subcategories in $\butt(\pp)$ and those in $\butt(\salg)$. Using the bijection between silting subcategories and complete cotorsion pairs in Corollary~\ref{cor:sil-cotor}, we have that any complete cotorsion pair in $\butt(\salg)$ is of the form $(\Psi(\m),\Psi(\n))$ for a complete cotorsion pair $(\m,\n)$ in $\butt(\pp)$. This gives the second required bijection.

	%First note that if this is true, then this bijection is compatible with the bijection in %Remark~\ref{rmk:sil}.
\end{proof}

Now we are ready to show the main result in this section.

\begin{proof}[Proof of Theorem~\ref{thm:BB}]
    By Theorem~\ref{thm:hrs}, $(\vv(\pp),\uu(\pp)[1])$ is a complete cotorsion pair in $\butt(\pp)$. Then by Proposition~\ref{prop:red}, $(\m(\pp),\n(\pp))=(\Psi(\vv(\pp)),\Psi(\uu(\pp)[1]))$ is a complete cotorsion pair in $\butt(\salg)$. Thus, (c) holds.
    
    Since $$\m(\pp)\cap\n(\pp)=\Psi(\vv(\pp)\cap\uu(\pp)[1])=\Psi(\add\alg[1])=\add\qq,$$
    where the second equality is due to Theorem~\ref{thm:hrs}, by Corollary~\ref{cor:sil-cotor}, we have (d). 
    
    Finally, by Lemma~\ref{lem:BY}~(1), we have equivalences $\m(\pp)\simeq\vv(\pp)/(\pp[1],\pp)$ and $\n(\pp)\simeq\uu(\pp)[1]/(\pp[1],\pp)\simeq\uu(\pp)/(\pp,\pp[-1])$, which give (e).
\end{proof}

\begin{remark}
    The ideal $(\pp[1],\pp)$ of $\vv(\pp)$ has an intrinsic description: any morphism from $V_1$ to $V_2$ in $\vv(\pp)$ which is in $(\pp[1],\pp)$ is a composition $g\circ f$, where $f \colon V_1\to \pp'$ and $g \colon \pp'\to V_2$, with $\pp' \in\add\pp$ and such that $h\circ f=0$ for any $h \colon \pp''\to V_1$ with $\pp''\in\add\pp$.
\end{remark}

\section{Torsion pairs in the module category}

In this section we still work in the two-term category $\butt(\alg)$, and show how complete cotorsion pairs in the two-term category $\butt(\alg) = \alg \ast \alg[1]$, give rise to torsion pairs in the module category
$\mod \alg$. For any full subcategory $\vv$ of $\butt(\alg)$, we use $H_\alg^0(\vv)$ (or $H^0(\vv)$ for short if there is no confusion arising) to denote the full subcategory of $\mod\alg$ consisting of $H_\alg^0(V)$, $V\in\vv$.  The following result is well-known, cf. e.g. \cite{IYo}.

\begin{lemma}\label{lem:ideal}
The functor $H^0$ induces an equivalence 
$$\butt(\alg)/(\alg[1])\to\mod\alg,$$
where $(\alg[1])$ denotes the ideal of $\butt(\alg)$ consisting of the morphisms factoring through objects in $\add\alg[1]$.
\end{lemma}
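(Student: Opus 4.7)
The strategy is the standard dictionary between two-term complexes of projectives and their cokernels. Write an object of $\butt(\alg)$ as a two-term complex $X = (P_X^{-1} \xrightarrow{d_X} P_X^0)$ of objects of $\add\alg$, concentrated in degrees $-1$ and $0$, so that $H^0(X) = \coker d_X$. Every finitely generated $\alg$-module $M$ admits a projective presentation $P^{-1} \to P^0 \to M \to 0$ with $P^{-1}, P^0 \in \add\alg$, which provides a preimage of $M$ under $H^0$; hence the induced functor $\butt(\alg)/(\alg[1]) \to \mod\alg$ is essentially surjective.

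For fullness, I would lift a module map $g\colon H^0(X) \to H^0(Y)$ by the usual projective lifting: composing $g$ with the canonical surjection $P_X^0 \twoheadrightarrow H^0(X)$ lifts through $P_Y^0 \twoheadrightarrow H^0(Y)$ to give $f^0\colon P_X^0 \to P_Y^0$, and then the image of $d_Y$ equals the kernel of $P_Y^0 \to H^0(Y)$, so $f^0 d_X$ factors as $d_Y f^{-1}$ for some $f^{-1}\colon P_X^{-1} \to P_Y^{-1}$ by projectivity of $P_X^{-1}$. The resulting chain map $f$ satisfies $H^0(f) = g$.

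For faithfulness modulo $(\alg[1])$, one direction is immediate: any morphism in $\butt(\alg)$ factoring through an object $(R \to 0) \in \add\alg[1]$ is represented by a chain map whose degree $0$ component is zero, hence induces zero on $H^0$. For the converse, suppose $f = (f^{-1}, f^0)\colon X \to Y$ is a chain map with $H^0(f) = 0$. Then $f^0$ followed by $P_Y^0 \twoheadrightarrow H^0(Y)$ vanishes, so $f^0 = d_Y s$ for some $s\colon P_X^0 \to P_Y^{-1}$. Using $s$ as a homotopy, $f$ is homotopic to the chain map $(f^{-1} - s d_X,\, 0)$. Setting $\gamma := f^{-1} - s d_X\colon P_X^{-1} \to P_Y^{-1}$, the chain-map equation for $f$ together with $d_Y s = f^0$ yields $d_Y \gamma = d_Y f^{-1} - d_Y s d_X = f^0 d_X - f^0 d_X = 0$. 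Therefore $\gamma$ is itself a chain map $X \to P_Y^{-1}[1]$ (in degree $-1$) composed with the chain map $P_Y^{-1}[1] \to Y$ whose degree $-1$ component is $\mathrm{id}_{P_Y^{-1}}$, which factors $[f]$ through $P_Y^{-1}[1] \in \add\alg[1]$.

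The only even mildly delicate point is the homotopy reduction in the last paragraph, and that reduces to choosing $s$ as the lift of $f^0$ through $d_Y$ and checking that the residual degree $-1$ piece becomes a chain map into the shifted projective $P_Y^{-1}[1]$; everything else is a routine projective-lifting exercise.
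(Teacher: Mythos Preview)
The paper does not prove this lemma at all; it is stated as well-known with a reference to \cite{IYo}. Your argument is the standard direct proof via projective presentations, and it is correct in outline, but there is a slip in the final factorization.

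You claim that the chain map $(\gamma,0)$ factors as $X \to P_Y^{-1}[1] \to Y$, where the second map has degree $-1$ component $\id_{P_Y^{-1}}$. But that second arrow is \emph{not} a chain map: the commutativity square forces $d_Y \circ \id_{P_Y^{-1}} = 0$, i.e.\ $d_Y = 0$, which fails in general. The repair is to factor through $P_X^{-1}[1]$ instead. The map $X \to P_X^{-1}[1]$ with degree $-1$ component $\id_{P_X^{-1}}$ is always a chain map (the relevant square reads $0 = 0$), and the map $P_X^{-1}[1] \to Y$ with degree $-1$ component $\gamma$ is a chain map exactly because $d_Y\gamma = 0$, which you have just verified. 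Their composite is $(\gamma,0)$, and $P_X^{-1}[1]\in\add\alg[1]$, so the factorization goes through. With this correction your proof is complete.
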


%\begin{proof}
%    Note that $H^0=\Hom(\alg,-)$. So this lemma is well-known, see e.g. \cite{IYo}.
%\end{proof}

For the mains results here, Lemma \ref{lem:tor-cotor} and Proposition \ref{prop:tor-pairs}, there are more general statements and proofs in \cite{PZ}.

\begin{lemma}\label{lem:tor-cotor}
Let $(\uu,\vv)$ be a complete cotorsion pair in $\butt(\alg)$. Then we have $$H^0(\vv)=\Gen(H^0(\uu\cap\vv)).$$
\end{lemma}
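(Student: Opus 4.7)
The plan is to establish the two containments separately. Setting $\s := \uu\cap\vv$, by Corollary~\ref{cor:sil-cotor} together with Lemma~\ref{lem:short}, $\s$ is a silting subcategory of $\t=\per\alg$ contained in $\butt(\alg)$, with the description $\vv = (\s\ast\s[1])\cap\butt(\alg)$. For the forward inclusion $H^0(\vv)\subseteq\Gen(H^0(\s))$, any $V\in\vv$ fits in a triangle $S_1\to V\to S_2[1]\to S_1[1]$ in $\t$ with $S_1,S_2\in\s$. Applying the cohomological functor $H^0$ and using that $H^0(S_2[1])=H^1(S_2)=0$ since $S_2\in\butt(\alg)$ is concentrated in cohomological degrees $-1$ and $0$, the long exact cohomology sequence produces an epimorphism $H^0(S_1)\twoheadrightarrow H^0(V)$, placing $H^0(V)\in\Gen(H^0(\s))$.

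For the reverse inclusion $\Gen(H^0(\s))\subseteq H^0(\vv)$, I reduce to showing that $H^0(\vv)$ is closed under quotients in $\mod\alg$. Since $\s\subseteq\vv$ gives $H^0(\s)\subseteq H^0(\vv)$ and $H^0$ respects finite direct sums, quotient-closure combined with the minimality of $\Gen(H^0(\s))$ as a subcategory closed under quotients and direct sums and containing $H^0(\s)$ will deliver the desired containment.

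To establish quotient-closure, fix $V\in\vv$ and an epimorphism $\phi\colon H^0(V)\twoheadrightarrow M$, and construct $V'\in\vv$ with $H^0(V')\cong M$ as follows. Writing $V=(P^{-1}\xrightarrow{d}P^0)$, let $K\subseteq P^0$ be the preimage of $\ker\phi$ under $P^0\twoheadrightarrow H^0(V)$, pick a projective $Q$ and an epimorphism $Q\twoheadrightarrow K$, and set $V_M:=(Q\to P^0)\in\butt(\alg)$; then $H^0(V_M)\cong M$, and a natural chain map $f\colon V\to V_M$ with $f^0=\id_{P^0}$ and $f^{-1}$ a lift of $d$ through $Q\twoheadrightarrow K$ satisfies $H^0(f)=\phi$. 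A cancellation of the identity component in the mapping cone identifies $\Cone(f)$ with $W[1]$ in $K^b(\proj\alg)$ for some $W\in\butt(\alg)$, yielding the $\et$-triangle $W\to V\to V_M\to W[1]$ in $\butt(\alg)$. Invoking completeness of $(\uu,\vv)$ for $W$ produces an $\et$-triangle $W\to V^+\to U^+\to W[1]$ with $V^+\in\vv$ and $U^+\in\uu$; the $\et$-vanishing $\et(U^+,V)=0$ lifts $W\to V$ to a morphism $\alpha\colon V^+\to V$, and the octahedral axiom applied to the factorization $W\to V^+\xrightarrow{\alpha} V$ yields a triangle $U^+\to V_M\to V'\to U^+[1]$ together with a triangle $V^+\to V\to V'\to V^+[1]$, identifying $V'$ as the cone of $\alpha$.

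The main obstacle is verifying both that $V'$ actually lies in $\butt(\alg)$ (equivalently $H^{-2}(V')=0$) and that $H^0(V')\cong M$ exactly rather than merely being a quotient of $M$. The first amounts to the induced map $H^{-1}(V^+)\to H^{-1}(V)$ being injective; the second requires that the composition $H^0(U^+)\to H^0(V_M)=M$ arising in the octahedron vanishes. Both should follow from a suitably minimal choice of the $\vv$-approximation of $W$ together with careful tracking of the cohomology long exact sequences of the two octahedral triangles, using Lemma~\ref{lem:ex} for extension-closure of $\vv$. Once $V'\in\butt(\alg)$ is secured, the $\et$-vanishing $\et(\uu,V')=0$ follows from the long-exact $\Hom$-sequence applied to $V^+\to V\to V'$, combined with $\Hom(U,V[1])=\et(U,V)=0$ and the degree-shift observation $\Hom_{K^b(\proj\alg)}(U,V^+[2])=0$ for $U\in\butt(\alg)$, giving $V'\in\vv$ by Lemma~\ref{lem:complete}.
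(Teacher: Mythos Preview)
Your forward inclusion is correct and matches the paper's argument in spirit (the paper uses the completeness triangle $V'\to U'\to V$ with $U'\in\uu\cap\vv$ rather than the silting description $\vv=(\s\ast\s[1])\cap\butt(\alg)$, but these amount to the same thing).

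For the reverse inclusion, you overcomplicate matters and then leave an acknowledged gap. Once you have the $\et$-triangle $W\to V\to V_M$ in $\butt(\alg)$ with $V\in\vv$ and $H^0(V_M)\cong M$, you are already done: apply $\Hom_{\per\alg}(\uu,-)$ to the triangle $W\to V\to V_M\to W[1]$ to obtain the exact sequence
\[
\Hom_{\per\alg}(\uu,V[1])\to\Hom_{\per\alg}(\uu,V_M[1])\to\Hom_{\per\alg}(\uu,W[2]).
\]
The first term vanishes because $V\in\vv$, and the last vanishes by the very degree-shift observation you invoke at the end (for any $W\in\butt(\alg)$). Hence $\et(\uu,V_M)=0$, so $V_M\in\vv$ and $M\cong H^0(V_M)\in H^0(\vv)$. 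The entire octahedral detour through $V^+$, $U^+$, and $V'$ is unnecessary, and with it the unverified claims about $H^{-2}(V')=0$ and $H^0(V')\cong M$ disappear. (Incidentally, neither of those claims is guaranteed by ``minimality'' of the approximation; there is no reason the induced map $H^{-1}(V^+)\to H^{-1}(V)$ should be injective.)

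This short-circuit is exactly the paper's strategy: starting from $M\in\Gen(H^0(\uu\cap\vv))$, it lifts an epimorphism $H^0(X)\twoheadrightarrow M$ (with $X\in\uu\cap\vv$) to a map $f\colon X\to Y$ in $\butt(\alg)$ with $H^0(Y)\cong M$, shows the third vertex $Z$ of the triangle on $f$ lies in $\butt(\alg)$, and then applies $\Hom_{\per\alg}(\uu,-)$ directly to that triangle to conclude $Y\in\vv$. Your $V_M$ and $W$ play the roles of the paper's $Y$ and $Z$.
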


\begin{proof}
    For any $V\in\vv$, since $(\uu,\vv)$ is a complete cotorsion pair, there is an $\et$-triangle
    $$V'\to U'\to V$$
    with $U'\in \uu$ and $V'\in\vv$. Since by Lemma~\ref{lem:ex}, $\vv$ is closed under extensions, we have $U'\in\uu\cap\vv$. Applying $H^0$ to the corresponding triangle
    $$V'\to U'\to V\to V'[1]$$
    in $\per\alg$, we have an exact sequence
    $$H^0(U')\to H^0(V)\to H^0(V'[1])=0$$
    in $\mod\alg$, which implies $V\in\Gen(H^0(\uu\cap\vv))$. So $H^0(\vv)\subset\Gen(H^0(\uu\cap\vv))$.
    
    For the converse inclusion, for any $M\in\Gen(H^0(\uu\cap\vv))$, by the equivalence in Lemma~\ref{lem:ideal}, there is a morphism $f:X\to Y$ in $\butt(\alg)$ such that $X\in\uu\cap\vv$, $H^0(Y)\cong M$ and $H^0(f)$ is an epimorphism in $\mod\alg$. Take a triangle
    \begin{equation}\label{eq:1}
       Z\to X\xrightarrow{f} Y\to Z[1] 
    \end{equation}
    in $\per\alg$ where $f$ sits. Then we have $Z\in\alg[-1]\ast\alg\ast\alg[1]$. Applying $H^0$ to the triangle~\eqref{eq:1}, we have an exact sequence
    $$H^0(X)\xrightarrow{H^0(f)} H^0(Y)\to H^0(Z[1])\to H^0(X[1])=0$$
    in $\mod\alg$. Since $H^0(f)$ is an epimorphism, we have $H^0(Z[1])=0$, which, together with $Z\in\alg[-1]\ast\alg\ast\alg[1]$, implies $Z\in\butt(\alg)$. Applying $\Hom_{\per\alg}(\uu,-)$ to the triangle~\eqref{eq:1}, we have an exact sequence
    $$\Hom_{\per\alg}(\uu,X[1])\to\Hom_{\per\alg}(\uu,Y[1])\to\Hom_{\per\alg}(\uu,Z[2])$$
    where the first term is zero by $X\in\uu\cap\vv$ and the last term is zero by $Z\in\butt(\alg)$. So we have $\et(\uu,Y)=0$, which implies $Y\in\vv$. So we have $M\cong H^0(Y)\in H^0(\vv)$. Hence $\Gen(H^0(\uu\cap\vv))\subset H^0(\vv)$.
\end{proof}

%\begin{lemma}\label{lem:factor}
%	Let $(\uu,\vv)$ be a complete cotorsion pair in $\butt(\alg)$. Then $H^0(\vv)$ is closed under factor objects.
%\end{lemma}

%\begin{proof}
%	Let $f \colon V\to X$ be a morphism in $\per \alg$ with $X\in\butt(\alg)$, $V\in\vv$ and such that $H^0(f)$ is an epimorphism in $\mod \alg$. Then in the triangle
%	$$V\xrightarrow{f} X\xrightarrow{g} Y\to V[1]$$
%	in $\per\alg$, we have that $H^0(g)=0$. By $X,V\in\alg\ast\alg[1] = \butt(\alg)$, this implies that $g$ factors through $\alg[1]$. Hence we have the following commutative diagram of triangles in $\per \alg$:
%	$$\xymatrix{
%		P_X\ar[r]\ar@{=}[d]&V\ar[r]\ar[d]^{f}&X'\ar@{-->}[d]\\
%		P_X\ar[r]&X\ar[r]\ar[d]^{g}&Q_X[1]\ar@{-->}[d]\\
%		&Y\ar@{=}[r]&Y
%	}$$
%	with $P_X,Q_X\in\add \alg$ and which produces a triangle $V\to X\oplus X'\to Q_X[1]\to V[1]$ in $\per \alg$. Note that $Q_X[1]\in\vv$ by Lemma~\ref{lem:projcon}. So $X\oplus X'\in\vv$ and hence $X$ is in $\vv$.
%\end{proof}

\begin{proposition}[{\cite[Theorem 3.6]{PZ}}]\label{prop:tor-pairs}
	The map
	$$(\uu,\vv)\mapsto (H^0(\vv),H^0(\vv)^\bot)$$
	is a bijection between complete cotorsion pairs in $\butt(\alg)$ and functorially finite torsion pairs in $\mod \alg$, with inverse
	given by $$(\T, \F) \mapsto (^{\bot_1}{\hat{\T}}, \hat{\T}) \text{ where }$$
	$$\hat{\T}=\{X\in\butt(\alg)\mid H^0(X)\in\T \} \text{ and }  ^{\bot_1}{\hat{\T}}= \{ Y \in \butt(\alg) \mid \Hom(Y, \hat{\T} [1]) = 0 \}.$$
\end{proposition}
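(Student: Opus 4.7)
My strategy is to factor the bijection through two-term silting complexes. On the cotorsion side, Corollary~\ref{cor:sil-cotor} parametrises complete cotorsion pairs in $\butt(\alg)$ by two-term silting complexes $\pp$ via $(\uu,\vv) \leftrightarrow \add\pp = \uu\cap\vv$; on the torsion side, Theorem~\ref{adachi} parametrises functorially finite torsion pairs by basic support $\tau$-tilting modules $T$ via $T \mapsto (\Gen T, T^\bot)$; and these are linked by the classical bijection $\pp \leftrightarrow T = H^0(\pp)$ of \cite{AIR}. For a complete cotorsion pair $(\uu,\vv)$ with associated silting complex $\pp$, Lemma~\ref{lem:tor-cotor} gives $H^0(\vv) = \Gen H^0(\pp)$, so $H^0(\vv)^\bot = H^0(\pp)^\bot$ and the forward map sends $(\uu,\vv)$ to the functorially finite torsion pair $(\Gen H^0(\pp), H^0(\pp)^\bot)$ furnished by Theorem~\ref{adachi}.

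For the inverse, given a functorially finite torsion pair $(\T,\F)$, I pick the two-term silting complex $\pp$ with $H^0(\pp) = T$ the associated support $\tau$-tilting module and set $\vv = (\add\pp\ast\add\pp[1])\cap\butt(\alg)$ and $\uu = (\add\pp[-1]\ast\add\pp)\cap\butt(\alg)$, which form a complete cotorsion pair by Corollary~\ref{cor:sil-cotor} with $H^0(\vv) = \Gen T = \T$. The whole statement then reduces to establishing the identification $\vv = \hat{\T}$: granted this, $\uu = \{Y \mid \et(Y,\vv) = 0\} = {}^{\bot_1}\hat{\T}$ from the cotorsion axiom, and both round trips become identities using $H^0(\hat{\T}) = \T$ (every module in $\T$ is the $H^0$ of its two-term projective presentation).

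To prove $\vv = \hat{\T}$, the inclusion $\vv \subseteq \hat{\T}$ is immediate from $H^0(\vv) = \T$. For $\hat{\T} \subseteq \vv$, my observation is that the argument in the second half of the proof of Lemma~\ref{lem:tor-cotor} in fact establishes a stronger statement: \emph{any} $Y \in \butt(\alg)$ with $H^0(Y) \in \Gen H^0(\uu\cap\vv)$ already lies in $\vv$. One picks a surjection $H^0(X_0) \twoheadrightarrow H^0(Y)$ with $X_0 \in \uu\cap\vv$, lifts via Lemma~\ref{lem:ideal} to $f\colon X_0 \to Y$ in $\butt(\alg)$, completes to a triangle $Z \to X_0 \to Y \to Z[1]$ in $\per\alg$, uses the long exact $H^\ast$-sequence together with surjectivity of $H^0(f)$ to place $Z$ in $\butt(\alg)$, and then reads off $\et(\uu,Y) = 0$ from applying $\Hom_{\per\alg}(\uu,-)$, using $\et(\uu,X_0) = 0$ (since $X_0 \in \vv$) and the vanishing $\Hom_{\per\alg}(\uu,Z[2]) = 0$ automatic from $\uu, Z \subseteq \butt(\alg)$. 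Since in our setup $\Gen H^0(\uu\cap\vv) = \Gen H^0(\pp) = \T$, this yields $\hat{\T} \subseteq \vv$.

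The only genuine obstacle is this inclusion $\hat{\T} \subseteq \vv$, the unique point at which the module-theoretic hypothesis $H^0(Y) \in \T$ must be converted into the extension-theoretic membership defining $\vv$. It is also the place where the equivalence of Lemma~\ref{lem:ideal} and the bounded (two-term) structure of $\butt(\alg)$ actually enter the argument; everything else is formal bookkeeping around Corollary~\ref{cor:sil-cotor}, Lemma~\ref{lem:tor-cotor}, and Theorem~\ref{adachi}.
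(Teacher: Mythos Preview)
Your approach is essentially the same as the paper's: both factor the bijection through two-term silting complexes via Corollary~\ref{cor:sil-cotor} and the \cite{AIR} bijection (stated as Theorem~\ref{adachi}), using Lemma~\ref{lem:tor-cotor} to identify the forward map with $(\uu,\vv)\mapsto(H^0(\vv),H^0(\vv)^\bot)$.

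The paper's proof stops there, simply declaring ``we get the required bijection'' without explicitly verifying that the inverse has the stated form $(\T,\F)\mapsto({}^{\bot_1}\hat{\T},\hat{\T})$. You go further and actually check $\vv=\hat{\T}$, correctly observing that the second half of the proof of Lemma~\ref{lem:tor-cotor} already establishes the stronger statement that any $Y\in\butt(\alg)$ with $H^0(Y)\in\Gen H^0(\uu\cap\vv)$ lies in $\vv$. This is a genuine improvement in completeness over the paper's argument, but not a different strategy.
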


\begin{proof}
	First recall that by \cite[Theorems 2.7 and 3.2]{AIR}, there is a bijection between silting subcategories in $\butt(\alg)$ and functorially finite torsion pairs in $\mod \alg$, given by mapping a silting complex
	$\pp$ to $(\Gen H^0(\pp) ,{(\Gen H^0(\pp))}^{\perp}) $. On the other hand, there is a bijection in Corollary~\ref{cor:sil-cotor} from the set of complete cotorsion pairs in $\butt(\alg)$ to the set of silting subcategories in $\butt(\alg)$, mapping $(\uu,\vv)$ to $\uu\cap\vv$. Then by Lemma~\ref{lem:tor-cotor}, we get the required bijection.
	
	%we only need to show $H^0(\vv)=\Gen H^0(\uu\cap\vv)$.
	
	%By Lemma~\ref{lem:factor}, we have the inclusion $\Gen H^0(\uu\cap\vv)\subset H^0(\vv)$.
	
	%Conversely, for any $V\in\vv$, by Corollary~\ref{cor:sil-cotor}, there is a triangle $S_1\to S_2\to V\to S_1[1]$ in $\per \alg$ with $S_1, S_2\in\uu\cap\vv$. We have   $H^1(S_1) =\Hom(\alg, S_1[1]) =0 $, since $S_1$ is in $\butt({\alg}) = \alg \ast \alg[1]$. Hence, taking homology of this triangle gives an epimorphism $H^0(S_2) \to H^0(V)$. This shows the reverse inclusion $\Gen H^0(\uu\cap\vv)\supset H^0(\vv)$, and this concludes the proof of the proposition.
\end{proof}

\begin{remark}\label{rmk:sil-tor-cotor}
    Let $(\uu,\vv)$ be a complete cotorsion pair in $\butt(\alg)$. By Theorem~\ref{thm:BB}, there is a silting complex $\pp$ in $\per\alg$ such that $\uu\cap\vv=\add\pp$. Hence, by Lemma~\ref{lem:tor-cotor} and \cite[Proposition~2.4]{BZ}, we have $$(H^0(\vv),H^0(\vv)^\bot)=(\T(\pp),\F(\pp)),$$
    where
	$$\T(\pp)=\{X\in\mod\alg\mid\Hom(\pp,X[1])=0\},\ \F(\pp)=\{X\in\mod\alg\mid\Hom(\pp,X)=0\}.$$

    %Since $H^0(\vv)=\Gen H^0(\uu\cap\vv)$ (Lemma~\ref{lem:tor-cotor}), the torsion pair $(H^0(\vv),H^0(\vv)^\perp)$ is induced by the silting subcategory $\s:=\uu\cap\vv$ in the sense that
	%$$(H^0(\vv),H^0(\vv)^\bot)=(\T(\s),\F(\s)),$$
	%where
\end{remark}

Let $\pp$ be a two-term silting complex in $\per\alg$ and let $\salg=\End\pp$. Recall from \cite{BZ} that there is a torsion pair $(\mathcal{X}(\pp),\mathcal{Y}(\pp))$ in $\mod\salg$, where
$$\mathcal{X}(\pp)=\Hom_{D^b(\alg)}(\pp,\F(\pp)[1]),\ \mathcal{Y}(\pp)=\Hom_{D^b(\alg)}(\pp,\F(\pp)[1]).$$
Moreover, there are equivalences $\mathcal{X}(\pp)\simeq \F(\pp)$ and $\mathcal{Y}(\pp)\simeq \T(\pp)$. On the other hand, by Theorem~\ref{thm:BB}, there is a complete cotorsion pair $(\uu(\pp),\vv(\pp))$ in $\butt(\alg)$ and a complete cotorsion pair $(\m(\pp),\n(\pp))$ in $\butt(\salg)$, with equivalences $\m(\pp)\simeq\uu(\pp)/(\pp[1],\pp)$ and $\n(\pp)\simeq\vv(\pp)/(\pp,\pp[-1])$. In the following, we show that $(\mathcal{X}(\pp),\mathcal{Y}(\pp))$ corresponds to $(\m(\pp),\n(\pp))$ under the bijection in Proposition~\ref{prop:tor-pairs} (but replacing $\alg$ with $\salg$). This implies that the Brenner-Butler theorem for torsion pairs is compatible with that for cotorsion pairs under the bijection in Proposition~\ref{prop:tor-pairs}.

\begin{proposition}\label{prop:compa}
We have the following equalities
$$H_{\salg}^0(\n(\pp))=\mathcal{X}(\pp),\ H_{\salg}^0(\n(\pp))^\bot=\mathcal{Y}(\pp).$$
\end{proposition}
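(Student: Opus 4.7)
The plan is to reduce the claim to Remark~\ref{rmk:sil-tor-cotor} applied to the pair $(\salg,\qq)$, and then invoke the Brenner--Butler dictionary from \cite{BZ} to match the resulting AIR torsion pair with $(\mathcal{X}(\pp),\mathcal{Y}(\pp))$.

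First, I would combine Theorem~\ref{thm:BB}(b) and (d). Part (b) tells us that $\qq=\Psi(\alg[1])$ is a two-term silting complex in $\per\salg$, so applying Theorem~\ref{thm:BB}(a) to $\qq$ (over the base algebra $\salg$) gives a complete cotorsion pair $(\uu(\qq),\vv(\qq))$ in $\butt(\salg)$ with $\uu(\qq)\cap\vv(\qq)=\add\qq$. Part (d) identifies this with $(\m(\pp),\n(\pp))$, so in particular $\n(\pp)=\vv(\qq)$.

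Next, apply Remark~\ref{rmk:sil-tor-cotor} with $\alg$ replaced by $\salg$ and $\pp$ replaced by $\qq$. Since the hypothesis $\uu(\qq)\cap\vv(\qq)=\add\qq$ is in place, the remark yields
$$(H^0_\salg(\vv(\qq)),\,H^0_\salg(\vv(\qq))^\bot)=(\T(\qq),\F(\qq)),$$
and combining with the first step gives $H^0_\salg(\n(\pp))=\T(\qq)$ and $H^0_\salg(\n(\pp))^\bot=\F(\qq)$.

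Finally, I would identify $\T(\qq)=\mathcal{X}(\pp)$ and $\F(\qq)=\mathcal{Y}(\pp)$. This is essentially the Brenner--Butler-type statement of \cite{BZ}: the AIR torsion pair in $\mod\salg$ attached to the silting complex $\qq=\Psi(\alg[1])$ coincides with the pair $(\mathcal{X}(\pp),\mathcal{Y}(\pp))$ defined from $\pp$, compatibly with the equivalences $\mathcal{X}(\pp)\simeq\F(\pp)$ and $\mathcal{Y}(\pp)\simeq\T(\pp)$ induced by $\Hom_{D^b(\alg)}(\pp,-[1])$ and $\Hom_{D^b(\alg)}(\pp,-)$. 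Concretely, using $\qq=\Psi(\alg[1])$ together with the triangle equivalence property of $\Psi$ (valid since $\pp$ is silting, so $\pp$ generates $\per\alg$ as a thick subcategory), one translates the vanishing conditions $\Hom_{\per\salg}(\qq,M)=0$ and $\Hom_{\per\salg}(\qq,M[1])=0$ defining $\F(\qq)$ and $\T(\qq)$ into conditions involving $\Hom_{\per\alg}(\pp,-)$ applied to a preimage of $M$, which recover precisely the descriptions of $\mathcal{Y}(\pp)$ and $\mathcal{X}(\pp)$.

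The main obstacle is the last identification, but once the reductions of steps one and two are in place, this amounts to citing the relevant results of \cite{BZ}. The only technical subtlety is that a general $M\in\mod\salg$ need not lie in $\per\salg$, so one must either extend $\Psi$ along a suitable derived category or use $\R\Hom_{\per\alg}(\pp,-)$ directly; with \cite{BZ} in hand this step is bookkeeping.
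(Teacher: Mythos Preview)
Your proposal is correct and follows essentially the same route as the paper: identify $(\m(\pp),\n(\pp))=(\uu(\qq),\vv(\qq))$ via Theorem~\ref{thm:BB}(d), apply Remark~\ref{rmk:sil-tor-cotor} over $\salg$ to obtain $(H^0_\salg(\n(\pp)),H^0_\salg(\n(\pp))^\bot)=(\T(\qq),\F(\qq))$, and then invoke \cite{BZ} (the paper cites \cite[Proposition~3.8]{BZ} specifically) for the identification $(\T(\qq),\F(\qq))=(\mathcal{X}(\pp),\mathcal{Y}(\pp))$.

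One caveat on your parenthetical: $\Psi$ is \emph{not} a triangle equivalence in general. It is the composite of the Keller equivalence $\per\alg\simeq\per\salg^{dg}$ with the induction $p_\ast\colon\per\salg^{dg}\to\per\salg$, and the latter is typically not an equivalence (this is precisely why Lemma~\ref{lem:BY} only gives an equivalence \emph{modulo an ideal}). Hence your sketched direct argument for $(\T(\qq),\F(\qq))=(\mathcal{X}(\pp),\mathcal{Y}(\pp))$ via ``translating vanishing conditions along $\Psi$'' does not go through as written. This does not damage your proof, since you correctly recognize that this step is delegated to \cite{BZ}, where the identification is established using $\R\Hom_\alg(\pp,-)$ at the derived-category level rather than through $\Psi$.
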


\begin{proof}
    By \cite[Proposition~3.8]{BZ} and Theorem~\ref{thm:BB}~(d), both of the torsion pair $(\mathcal{X}(\pp),\mathcal{Y}(\pp))$ in $\mod\salg$ and the cotorsion pair $(\m(\pp),\n(\pp))$ in $\butt(\salg)$ are induced by the same two-term silting complex $\qq$. So by the proof of Proposition~\ref{prop:tor-pairs}, we get the required equalities.

%    By Proposition~\ref{prop:tor-pairs}, we only need to show the first equality, since any half part of (co)torsion pairs is determined by the other half part. By Lemma~\ref{lem:BY}, we have $\Phi(\vv[1])\cong \vv[1]/(S[1],S)$. So we have
%    $$H^0_\salg\left(\Phi(\vv[1])\right)\cong\Phi(\vv[1])/(\salg[1])\simeq\left(\vv[1]/(S[1],S)\right)/(S[1])=\vv[1]/(S[1])\simeq\vv/(S).$$
\end{proof}

%\begin{corollary}
%    $$H^0_\salg\left(\Psi(\uu[1])^\perp\right)\simeq H_\alg^0(\vv)\text{ and }H_\salg^0\left(\Psi(\uu[1])\right)\simeq H^0_\alg(\vv)^{\bot}.$$
%\end{corollary}

%\begin{verbatim}
%The end of this section needs attention. More details are needed for 3.3-5
%\end{verbatim}

\begin{corollary}\label{cor:4}
	Let $(\uu,\vv)$ be a complete cotorsion pair in $\butt(\alg)$. Then there is an equivalence $H^0(\vv)^\bot\simeq\uu/(\uu\cap\vv)$.
\end{corollary}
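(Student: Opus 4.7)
My plan is to chain together several equivalences already established in the paper, passing through the endomorphism algebra of the silting complex associated to $(\uu,\vv)$.

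First, by Corollary~\ref{cor:sil-cotor} there is a two-term silting complex $\pp$ with $\add\pp=\uu\cap\vv$, so that $(\uu,\vv)=(\uu(\pp),\vv(\pp))$; set $\salg=\End_{\per\alg}(\pp)$. By Remark~\ref{rmk:sil-tor-cotor} we have $H^0(\vv)^\bot=\F(\pp)$, and by the equivalence from \cite{BZ} quoted just before Proposition~\ref{prop:compa}, $\F(\pp)\simeq\mathcal{X}(\pp)$. Next, Proposition~\ref{prop:compa} gives $\mathcal{X}(\pp)=H^0_\salg(\n(\pp))$, and Lemma~\ref{lem:ideal} applied over $\salg$, restricted to the full subcategory $\n(\pp)\subset\butt(\salg)$, yields an equivalence $\n(\pp)/(\salg[1])\simeq\mathcal{X}(\pp)$, where $(\salg[1])$ denotes the ideal of morphisms factoring through $\add\salg[1]$.

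To finish, I would transport this quotient back to $\uu$ using Theorem~\ref{thm:BB}(e), which says that $\Psi$ induces an equivalence $\n(\pp)\simeq\uu(\pp)/(\pp,\pp[-1])$. Since $\Psi$ restricts to an equivalence $\add\pp\simeq\add\salg$, and hence sends $\pp[1]$ to $\salg[1]$, the ideal $(\salg[1])\subset\n(\pp)$ pulls back, after the shift by $[-1]$, to the ideal of morphisms in $\uu(\pp)$ factoring through $\add\pp$. Since $(\pp,\pp[-1])\subset(\pp)$, the iterated quotient collapses to $\uu(\pp)/(\pp)=\uu/(\uu\cap\vv)$. Composing all the displayed equivalences gives the desired $H^0(\vv)^\bot\simeq\uu/(\uu\cap\vv)$.

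The main obstacle is the bookkeeping in the final step: one must verify carefully that the two ideals quotiented out in sequence---first $(\pp,\pp[-1])$ from Theorem~\ref{thm:BB}(e), then the $\Psi$-preimage of $(\salg[1])$---together generate precisely the ideal of morphisms in $\uu(\pp)$ factoring through $\add\pp$. This reduces to the identity $\Psi(\pp[1])=\salg[1]$ together with the inclusion $(\pp[1],\pp)\subset(\pp[1])$ inside $\uu(\pp)[1]$, but some care is needed with the shift and with the fact that the described maps are genuinely functorial.
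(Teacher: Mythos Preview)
Your proposal is correct and follows essentially the same route as the paper: pass to the silting complex $\pp$ with $(\uu,\vv)=(\uu(\pp),\vv(\pp))$, use Remark~\ref{rmk:sil-tor-cotor} and the equivalence $\F(\pp)\simeq\mathcal{X}(\pp)$ together with Proposition~\ref{prop:compa} to reach $H^0_\salg(\n(\pp))$, then apply Lemma~\ref{lem:ideal} over $\salg$ and Lemma~\ref{lem:BY} (or equivalently Theorem~\ref{thm:BB}(e)) to identify $\n(\pp)/(\salg[1])$ with $\uu/(\uu\cap\vv)$. The ideal bookkeeping you flag as the main obstacle is handled in the paper by exactly the inclusion $(\pp[1],\pp)\subset(\pp[1])$ you describe, so your outline and the paper's argument coincide.
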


\begin{proof}
    By Corollary~\ref{cor:sil-cotor}, there is a two-term silting complex $\pp$ in $\per\alg$ such that $(\uu,\vv)=(\uu(\pp),\vv(\pp))$. Since $\mathcal{Y}(\pp)\simeq\T(\pp)$, by Remark~\ref{rmk:sil-tor-cotor} and Proposition~\ref{prop:compa}, we have $H^0(\vv)^\bot=\T(\pp)\simeq\mathcal{Y}(\pp)=H_{\salg}^0(\n(\pp))^\bot$. By definition, $\n(\pp)=\Psi(\uu[1])$, so we have the following equivalences $$H^0(\vv)^\bot\simeq H_\salg^0\left(\Psi(\uu[1])\right)\simeq \Psi(\uu[1])/(\salg[1])\simeq \left(\uu[1]/(\s[1],\s)\right)/\s[1]\simeq \uu[1]/\s[1]\simeq \uu/\s,$$
	where the second equivalence is due to Lemma~\ref{lem:ideal}, and the third one is due to Lemma~\ref{lem:BY}, 
\end{proof}

%\begin{remark}
%	We have $H^0(\vv)\simeq\vv/(\P[1])$.
%\end{remark}

\section{Weak cotorsion pairs and main results}

%	\begin{definition}
%		A pair $(\C,\T)$ of subcategories of $\mod \alg$ is called a left weak cotorsion pair (or lw-cotorsion pair for short) if
%		\begin{enumerate}
%			\item $\Ext^1(\C,\T)=0$;
%			\item for any $M\in\mod \alg$, there are exact sequences
%			$$0\to Y_M\to X_M\xrightarrow{f_M} M\to 0$$
%			and
%			$$M\xrightarrow{g^M} Y^M\to X^M\to 0$$
%			with $X_M,X^M\in\C$, $Y_M,Y^M\in\T$, $f_M$ a right $\C$-approximation of $M$, and $g^M$ a left $\T$-approximation of $M$. 
%		\end{enumerate}
%	\end{definition}

Cotorsion pairs for abelian categories were first studied by Salce \cite{S}, and by now
there are various definition of (complete) cotorsion pairs in the literature.
In our Definition \ref{def:lw-cot},
we note that requiring in addition $g_M$ to be injective, would
give what e.g. Hovey \cite{H} defines as a complete cotorsion pair.

First note that $\T$ determines $\C$ in a lw-cotorsion pair $(\C,\T)$, but the opposite
does in general not hold.
	
\begin{lemma}\label{c-determined}
For any left weak cotorsion pair $(\C,\T)$ of $\mod\alg$, we have $\C= {^{\bot_1}\T}$.	
\end{lemma}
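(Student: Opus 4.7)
The plan is to split the proof into the two inclusions. The inclusion $\C \subseteq {}^{\bot_1}\T$ is immediate from condition~(1) of Definition~\ref{def:lw-cot}, which literally says $\Ext^1(\C,\T)=0$; so no work is needed here.

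For the reverse inclusion ${}^{\bot_1}\T \subseteq \C$, I would take an arbitrary $M \in {}^{\bot_1}\T$ and invoke condition~(2) of the definition to produce a right $\C$-approximation short exact sequence
$$0 \to Y_M \to X_M \xrightarrow{f_M} M \to 0,$$
with $X_M \in \C$ and $Y_M \in \T$. Applying $\Hom(M,-)$ to this sequence yields the exact piece
$$\Hom(M,X_M) \to \Hom(M,M) \to \Ext^1(M,Y_M).$$
Since $Y_M \in \T$ and $M \in {}^{\bot_1}\T$, the right-hand term vanishes, so $\id_M$ lifts to a splitting of $f_M$. This exhibits $M$ as a direct summand of $X_M \in \C$.

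To finish, one concludes $M \in \C$ by closure of $\C$ under direct summands. This is the only delicate point of the argument: the identity $\C = {}^{\bot_1}\T$ forces $\C$ to be closed under direct summands, because the orthogonal class ${}^{\bot_1}\T$ automatically is (the bifunctor $\Ext^1(-,T)$ being additive). Accordingly, I would rely on the standing assumption (customary in this representation-theoretic setting) that the subcategories appearing in an lw-cotorsion pair are closed under direct summands. The rest of the proof is then a one-line application of the splitting.

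Overall this is a short argument with no real obstacle beyond recognising that condition~(2) is precisely the structure one needs to promote $\Ext^1$-vanishing against $\T$ into membership of $\C$; the splitting-of-approximation trick does all the work.
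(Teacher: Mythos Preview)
Your argument is essentially identical to the paper's: both inclusions are handled the same way, with the reverse inclusion obtained by splitting the approximation sequence $0\to Y_M\to X_M\to M\to 0$ via $\Ext^1(M,Y_M)=0$. The paper is actually terser than you are and simply writes ``the sequence splits, and hence $M$ is in $\C$'' without commenting on closure under direct summands; your explicit flag of that point is a reasonable bit of extra care, but there is no difference in method.
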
	

\begin{proof}
	We have by definition that $\C\subseteq  {^{\bot_1}\T}$.
	Let $M \in {^{\bot_1}\T}$. By definition there is an exact sequence 
	$$0\to Y_M\to X_M\to M\to 0$$ 
	with $X_M$ in $\C$ and $Y_M$ in $\T$. By assumption, the sequence splits, and 
	hence $M$ is in $\C$.
	\end{proof}
\begin{example}
Let $\alg =k Q/ (\beta \alpha)$, where $Q$ is the quiver 
$$1  \xrightarrow{\alpha} 2 \xrightarrow{\beta} 3.$$
Let $P_i$ and $S_i$ denote the indecomposable projective ans simple corresponding to vertex $i$ (such that $P_3= S_3$), let $\C = \add\{P_3, P_2, P_1, S_2\}$, let $\T= \add\{P_2, P_1, S_2\}$ and
let $\T'= \add\{P_2, P_1, S_2, S_1\}$. Then it is straightforward to check that both
$(\C,\T)$ and $(\C,\T')$ are lw-cotorsion pairs.
\end{example}	

Let $\tau$ denote the Auslander-Reiten translation in a module category.
Our aim is to see that support $\tau$-tilting modules
give rise to certain lw-cotorsion pairs. Recall that a basic
$\alg$-module $M$ is called $\tau$-tilting, if 
$\Hom(M, \tau M) = 0$, and $M$ has the same number of indecomposable summands as the number of simple $\alg$-modules,
and support $\tau$-tilting, if there is an idempotent $e$,
such that $M$ is $\tau$-tilting over $\alg/\alg e \alg$.

There is a close connection to two-term silting objects.

For a module $M$ in $\mod \alg$, we let $P_M$ denote its (punctured) minimal projective presentation, which we in natural way can view as a complex in $\per \alg$ concentrated
in degree $-1,0$, that is, as an object in $\butt(\alg)$.

\begin{lemma}[{\cite{AIR}}]\label{lem:air}
For $\alg$-modules $M,N$, we have that $\Hom(M, \tau N) = 0$ \sloppy if and only 
if $\Hom_{\per \alg}(P_N, P_M[1]) = 0$.
\end{lemma}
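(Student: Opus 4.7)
The plan is to identify both sides of the biconditional with the vanishing of the cokernel of the map
\[\Hom_\alg(d_N, M)\colon \Hom_\alg(P_0^N, M)\longrightarrow \Hom_\alg(P_1^N, M),\]
where $d_N\colon P_1^N\to P_0^N$ is the differential of the minimal projective presentation $P_N$. Once each of $\Hom_{\per\alg}(P_N, P_M[1])$ and $\Hom_\alg(M, \tau N)$ is shown to be isomorphic (possibly after applying the duality $D$) to this cokernel, the desired equivalence follows since $D$ preserves vanishing on finite-dimensional spaces.

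For the $\tau$-side, I would use the defining exact sequence
\[0\to \tau N\to \nu P_1^N\xrightarrow{\nu d_N}\nu P_0^N\]
of the Auslander--Reiten translate, where $\nu=D\Hom_\alg(-,\alg)$ is the Nakayama functor. Applying $\Hom_\alg(M,-)$ and invoking the natural Nakayama isomorphism $\Hom_\alg(M,\nu P)\cong D\Hom_\alg(P, M)$ for projective $P$ yields $\Hom_\alg(M,\tau N)\cong D\,\coker\Hom_\alg(d_N, M)$, since $D$ turns cokernels into kernels.

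For the $\per\alg$-side, I would unfold $\Hom_{\per\alg}(P_N, P_M[1])$ directly. A chain map between the two-term complexes $P_N$ (in cohomological degrees $-1,0$) and $P_M[1]$ (in degrees $-2,-1$) is determined by a single morphism $f\colon P_1^N\to P_0^M$, with the chain-map conditions automatic for degree reasons; two such $f$ are homotopic precisely when they differ by $d_M\circ h^{-1}+h^0\circ d_N$ for some $h^{-1}\colon P_1^N\to P_1^M$ and $h^0\colon P_0^N\to P_0^M$. Composing with the projection $\pi\colon P_0^M\twoheadrightarrow M$ gives a map into $\coker\Hom_\alg(d_N, M)$; projectivity of $P_0^N$ (used to lift maps $P_0^N\to M$ along $\pi$) together with the observation that any $f\colon P_1^N\to P_0^M$ with $\pi\circ f=0$ factors through $d_M$ show that this induced map is an isomorphism. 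Combining the two identifications, both Hom groups vanish iff $\coker\Hom_\alg(d_N, M)=0$, which is the claim. The only delicate point is the projective-lifting step in the last computation; no deeper obstacle arises.
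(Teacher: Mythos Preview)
The paper does not supply a proof of this lemma; it is simply quoted from \cite{AIR}. Your argument is correct and is essentially the standard proof: one identifies both $\Hom_\alg(M,\tau N)$ and $\Hom_{\per\alg}(P_N,P_M[1])$ with (the dual of) the cokernel of $\Hom_\alg(d_N,M)$. The only small imprecision is in the injectivity step for the $\per\alg$-side: the factorisation of a map $f\colon P_1^N\to P_0^M$ with $\pi\circ f=0$ through $d_M$ uses the projectivity of $P_1^N$ (lifting along the surjection $P_1^M\twoheadrightarrow\im d_M$), not of $P_0^N$; the latter is indeed what you need to lift $P_0^N\to M$ along $\pi$. With that adjustment the argument is complete and matches the computation underlying \cite[Lemma~3.4/Proposition~2.4]{AIR}.
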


The complete cotorsion pairs in $\butt(\alg)$, give rise
to lw-cotorsion pair in $\mod \alg$.

\begin{proposition}\label{prop:left-weak}
Let $(\uu, \vv)$ be a complete cotorsion pair in $\butt(\alg)$. Then the pair
$(H^0(\uu), H^0(\vv) )$ is a lw-cotorsion pair in $\mod \alg$.
\end{proposition}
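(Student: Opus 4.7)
The plan is to verify the two conditions of Definition~\ref{def:lw-cot} for the pair $(H^0(\uu),H^0(\vv))$. As preparation, Corollary~\ref{cor:sil-cotor} supplies a two-term silting complex $\pp$ with $\uu\cap\vv=\add\pp$, and then Lemma~\ref{lem:tor-cotor} yields $H^0(\vv)=\Gen(H^0(\pp))$; in particular $H^0(\vv)$ is closed under quotients in $\mod\alg$, a fact that will be used crucially below.

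For the $\Ext^1$-vanishing, fix $U\in\uu$, $V\in\vv$ and set $M=H^0(U)$, $N=H^0(V)$. The idea is to embed $\Ext^1_\alg(M,N)$ into $\et(U,V)$ via the canonical truncation triangles
\[H^{-1}(U)[1]\to U\to M\to H^{-1}(U)[2]\quad\text{and}\quad H^{-1}(V)[1]\to V\to N\to H^{-1}(V)[2]\]
in $D^b(\alg)$. Applying $\Hom(-,N[1])$ to the first triangle and using that negative $\Ext$-groups of modules vanish produces an injection $\Ext^1_\alg(M,N)\hookrightarrow\Hom_{D^b(\alg)}(U,N[1])$. Applying $\Hom(U,-)$ to the second triangle, and using that $U$ is concentrated in degrees $\{-1,0\}$ to kill $\Hom_{D^b(\alg)}(U,H^{-1}(V)[n])$ for $n\geq 2$, identifies $\Hom_{D^b(\alg)}(U,N[1])$ with $\Hom_{D^b(\alg)}(U,V[1])=\et(U,V)$, which vanishes by hypothesis.

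For the approximation sequences, I would take $M\in\mod\alg$, let $P_M\in\butt(\alg)$ be its minimal projective presentation (so $H^0(P_M)=M$), and invoke completeness of $(\uu,\vv)$ in $\butt(\alg)$ to obtain $\et$-triangles
\[V_1\to U_1\to P_M\quad\text{and}\quad P_M\to V_2\to U_2\]
with $U_i\in\uu$ and $V_i\in\vv$. Passing to the long exact cohomology sequences of the underlying triangles in $\per\alg$ (and using that every object of $\butt(\alg)$ has vanishing $H^1$) yields exact sequences $H^0(V_1)\to H^0(U_1)\to M\to 0$ and $M\to H^0(V_2)\to H^0(U_2)\to 0$ in $\mod\alg$. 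Setting $X_M:=H^0(U_1)$, $Y_M:=\ker(X_M\to M)$, $Y^M:=H^0(V_2)$, and $X^M:=H^0(U_2)$, the $H^0$-images lie in $H^0(\uu)$ or $H^0(\vv)$ by definition, while $Y_M$, being a quotient of $H^0(V_1)\in H^0(\vv)$, lies in $H^0(\vv)$ by the quotient-closure noted above.

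Verifying the approximation property amounts to lifting. Given $g\colon U'\to M$ with $U'=H^0(U_\ast)$, $U_\ast\in\uu$, the equivalence $\butt(\alg)/(\alg[1])\simeq\mod\alg$ of Lemma~\ref{lem:ideal} produces a lift $\tilde g\colon U_\ast\to P_M$ in $\butt(\alg)$; the vanishing $\et(U_\ast,V_1)=0$ then lets one factor $\tilde g$ through $U_1\to P_M$, and applying $H^0$ supplies the required factorization of $g$ through $X_M\to M$. The left $H^0(\vv)$-approximation property is dual, lifting $g\colon M\to V'=H^0(V_\ast)$ to $\tilde g\colon P_M\to V_\ast$ and factoring through $P_M\to V_2$ by means of $\et(U_2,V_\ast)=0$. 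The most delicate point in the whole argument is that $Y_M$ is only a quotient of $H^0(V_1)$ rather than $H^0(V_1)$ itself, so everything rests on the quotient-closure of $H^0(\vv)$, i.e., on the torsion-pair description developed in Section~3.
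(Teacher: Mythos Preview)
Your proof is correct and follows the same overall strategy as the paper: pull the two $\et$-triangles for $P_M$ from completeness of $(\uu,\vv)$ in $\butt(\alg)$, apply $H^0$, and use the quotient-closure of $H^0(\vv)$ supplied by Lemma~\ref{lem:tor-cotor} to place the kernel $Y_M$ in $H^0(\vv)$.

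Two details differ and are worth noting. For the $\Ext^1$-vanishing, the paper first observes that $P_X\in\uu$ and $P_Y\in\vv$ whenever $X\in H^0(\uu)$ and $Y\in H^0(\vv)$, and then combines Lemma~\ref{lem:air} with the Auslander--Reiten formula; your truncation-triangle computation in $D^b(\alg)$ bypasses both the (implicit) identification of $P_X$ as a summand of the chosen preimage in $\uu$ and the appeal to $\tau$, at the price of a short direct calculation. For the right $H^0(\uu)$-approximation, the paper simply applies $\Hom(C,-)$ to the short exact sequence $0\to Y_M\to X_M\to M\to 0$ and uses the $\Ext^1$-vanishing already established, whereas you lift back to $\butt(\alg)$ via Lemma~\ref{lem:ideal} and use $\et(U_\ast,V_1)=0$; this mirrors what both proofs do for the left approximation, but the paper's route is a line shorter here since the module-level $\Ext$-vanishing is already in hand.
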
	

\begin{proof}
We first show that $\Ext^1(X,Y)= 0$, when 
$X$ is in $H^0(\uu)$, and $Y$ is in $H^0(\vv)$.
Choose such $X,Y$, then $P_X \in \uu$ and $P_Y \in \vv$. 
Then, by assumption $\Hom_\t(P_X,P_Y[1]) = 0$, which implies $\Hom(Y, \tau X) = 0$ by Lemma \ref{lem:air}, and 
hence $\Ext^1(X,Y)= 0$ by the Auslander-Reiten formula.

Now let $M$ be any module in $\mod \alg$. Then, by assumption there are triangles
$$V \to U \xrightarrow{\alpha} P_M  \text{ \ and \ \ }  P_M \xrightarrow{\beta}  V' \to U' $$ 
in $\t$, with $U,U' \in \uu$ and $V,V' \in \vv$.

Applying $H^0$ to the first triangle gives an exact sequence 
$$H^0(V) \to H^0(U) \xrightarrow{H^0{\alpha}} M \to 0.$$
Then we have a short exact sequence
$$0\to K \to H^0(U) \xrightarrow{H^0{\alpha}} M \to 0$$
with $K$ a factor of $H^0(V)$. Since by Lemma~\ref{lem:tor-cotor} $H^0(\vv)=\Gen(H^0(\uu\cap\vv))$ is closed under factors, we have $K\in H^0(\vv)$. Since $\Ext^1(H^0(\uu), H^0(V))=0$, the map $H^0(\alpha):H^0(U) \to M$ is a right $H^0(\uu)$-approximation of $M$.

Consider now the right exact sequence obtained by applying $H^0$ to the second triangle
$$M \xrightarrow{H^0(\beta)} H^0(V') \to H^0(U') \to 0.$$
Let $N$ be in $H^0(\vv)$ and consider a map $g \colon M \to N$.
Choose a map $\gamma \colon P_M \to P_N$, such that $H^0(\gamma) = g$.
Since $P_N$ is in $\vv$, we have $\Hom_\t(U', P_N[1])= 0$, and hence there is a map
$\eta \colon V' \to P_N$ such that $\eta \beta = \gamma$ and hence $H^0(\eta) H^0(\beta) =g$, 
which shows that $H^0(\beta)$ is a left $H^0(\vv)$-approximation of $M$.
This finishes the proof that $(H^0(\uu), H^0(\vv) )$ is a lw-cotorsion pair.
\end{proof}

We are now ready to prove our first main theorem.

\begin{theorem}\label{lw-pair}
For a support $\tau$-tilting module $T$, we have that 
$({}^{\bot_1}\Gen T,\Gen T)$ is a lw-cotorsion pair.
\end{theorem}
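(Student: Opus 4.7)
The plan is to obtain the lw-cotorsion pair $({}^{\bot_1}\Gen T,\Gen T)$ by pulling back through the chain of bijections established in the preceding sections: from support $\tau$-tilting modules, to functorially finite torsion pairs, to complete cotorsion pairs in $\butt(\alg)$, and finally to lw-cotorsion pairs via Proposition~\ref{prop:left-weak}.

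First I would invoke the Adachi--Iyama--Reiten theorem (Theorem~\ref{adachi}) to produce from $T$ the functorially finite torsion pair $(\Gen T, T^{\bot})$ in $\mod\alg$. Next, by Proposition~\ref{prop:tor-pairs}, this torsion pair corresponds to a (unique) complete cotorsion pair $(\uu,\vv)$ in $\butt(\alg)$ satisfying
\[
H^0(\vv)=\Gen T, \qquad H^0(\vv)^{\bot}=T^{\bot}.
\]
Indeed, chasing the explicit inverse from Proposition~\ref{prop:tor-pairs}, one may take $\vv=\widehat{\Gen T}$ and $\uu={}^{\bot_1}\widehat{\Gen T}$ in $\butt(\alg)$; alternatively, Corollary~\ref{cor:sil-cotor} together with Remark~\ref{rmk:sil-tor-cotor} identifies $(\uu,\vv)$ as the complete cotorsion pair $(\uu(\pp),\vv(\pp))$ associated to the two-term silting complex $\pp=P_T$ which, via \cite{AIR}, corresponds to the support $\tau$-tilting module $T$.

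Applying $H^0$, Proposition~\ref{prop:left-weak} then yields that $(H^0(\uu),H^0(\vv)) = (H^0(\uu),\Gen T)$ is a lw-cotorsion pair in $\mod\alg$. To conclude, I invoke Lemma~\ref{c-determined}, which asserts that the left-hand side of a lw-cotorsion pair is determined by the right-hand side as $^{\bot_1}(-)$; thus $H^0(\uu) = {}^{\bot_1}\Gen T$, and the desired pair is exactly $({}^{\bot_1}\Gen T,\Gen T)$.

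The argument is almost purely formal given the machinery developed in Sections~1--3, so I do not expect a significant obstacle. The only point requiring some care is making sure that the cotorsion pair $(\uu,\vv)$ produced by the bijection of Proposition~\ref{prop:tor-pairs} really has $H^0(\vv)=\Gen T$ (rather than merely determining $\Gen T$ as its torsion class up to some cleaning-up); this is where Lemma~\ref{lem:tor-cotor} (giving $H^0(\vv)=\Gen H^0(\uu\cap\vv)$) is needed to align the silting/cotorsion picture with the $\tau$-tilting picture, and once this identification is in place, the rest of the argument follows by straightforward composition of the previously established bijections.
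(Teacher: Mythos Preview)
Your proposal is correct and follows essentially the same approach as the paper: invoke \cite{AIR} to get that $\Gen T$ is a functorially finite torsion class, use Proposition~\ref{prop:tor-pairs} to obtain a complete cotorsion pair $(\uu,\vv)$ in $\butt(\alg)$ with $H^0(\vv)=\Gen T$, apply Proposition~\ref{prop:left-weak}, and conclude via Lemma~\ref{c-determined}. The additional identification with $(\uu(\pp),\vv(\pp))$ you sketch is not needed for the argument, but does no harm.
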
	

\begin{proof}
By \cite{AIR}, we have that $\Gen T$ is a functorially finite torsion class. 	
Then Proposition \ref{prop:tor-pairs}, implies there is some complete cotorsion pair 
$(\uu, \vv)$ in $\butt(\alg)$ such that $H^0(\vv) = \Gen T$.
Proposition \ref{prop:left-weak} now gives that $(H^0(\uu), H^0(\vv))$ is a lw-cotorsion pair,
and the statement hence follows from Lemma \ref{c-determined}. 
\end{proof}

We now prove the our second main theorem, which for finite dimensional algebras, is a generalization of \cite[Theorem 2.29]{BBOS}
by Bauer, Botnan, Oppermann and Steen (they work in a more general setting).
%Recall that a support $\tau$-tilting module is ....

\begin{theorem}\label{thm:2}
	The map $T \mapsto (^{\bot_1}{\Gen T}, \Gen T, T^{\bot})$ is a bijection between basic support $\tau$-tilting modules and
	lw-cotorsion-torsion triples, with inverse $(\C, \T, \F) \mapsto T$, where $\add T = \T \cap \F$.
	
	The map specializes to a bijection between tilting modules  and cotorsion-torsion triples.
\end{theorem}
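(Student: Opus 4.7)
The plan is to assemble the bijection from Theorem~\ref{adachi} and Theorem~\ref{lw-pair}. Well-definedness of the forward map $T \mapsto ({^{\bot_1}\Gen T}, \Gen T, T^{\perp})$ is immediate: Theorem~\ref{adachi} gives the functorially finite torsion pair $(\Gen T, T^{\perp})$ and Theorem~\ref{lw-pair} gives the lw-cotorsion pair $({^{\bot_1}\Gen T}, \Gen T)$, which together form a lw-cotorsion-torsion triple.

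For the inverse, starting from a triple $(\C,\T,\F)$, I would first verify that $\T$ is a functorially finite torsion class. Covariant finiteness is built into Definition~\ref{def:lw-cot} via the left $\T$-approximations $g^M$. Contravariant finiteness is automatic for any torsion class in $\mod\alg$: the torsion radical inclusion $tM \hookrightarrow M$ is a right $\T$-approximation, since any map from an object of $\T$ into $M$ factors through $tM$. Theorem~\ref{adachi} then supplies a unique basic support $\tau$-tilting $T$ with $(\T,\F) = (\Gen T, T^{\perp})$, while Lemma~\ref{c-determined} forces $\C = {^{\bot_1}\T}$. This already establishes the bijection at the level of triples; it remains to prove that the inverse is given by $\add T = \C \cap \T$.

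The inclusion $\add T \subseteq \C \cap \T$ is immediate from $T \in \Gen T$ and $\Ext^1(T, \Gen T) = 0$, the latter being a standard consequence of $\Hom(T, \tau T) = 0$ and the Auslander-Reiten formula (cf.\ Lemma~\ref{lem:air}). For the reverse inclusion, I would transfer the problem to the two-term category. By Corollary~\ref{cor:sil-cotor} and Proposition~\ref{prop:tor-pairs}, the torsion pair $(\T,\F)$ arises from a complete cotorsion pair $(\uu,\vv)$ in $\butt(\alg)$ with $\uu \cap \vv = \add\pp$, where $\pp = P_T \oplus \alg e[1]$ is the associated two-term silting complex (with $e$ the idempotent at which $T$ vanishes), so that $H^0(\pp) = T$. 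Proposition~\ref{prop:left-weak} together with Lemma~\ref{c-determined} then yields $H^0(\uu) = \C$ and $H^0(\vv) = \T$. Given $X \in \C \cap \T$, I would pick lifts $U \in \uu$ and $V \in \vv$ with $H^0(U) = H^0(V) = X$. A standard decomposition argument in $K^b(\proj\alg)$ shows that any such lift is isomorphic in $\butt(\alg)$ to $P_X \oplus Q[1]$ for some projective $Q$, where $P_X$ is the minimal projective presentation of $X$. Since $\uu, \vv$ are closed under direct summands (Lemma~\ref{lem:ex}), this places $P_X$ in $\uu \cap \vv = \add\pp$. By minimality, $P_X$ has no $\add\alg[1]$-summand, so $P_X \in \add P_T$, and hence $X = H^0(P_X) \in \add T$.

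For the specialization to tilting, recall from \cite[Proposition~2.2]{AIR} that $T$ is tilting iff it is faithful, i.e.\ there is a monomorphism $\alg \hookrightarrow T^n$. The lw-cotorsion pair $(\C,\T)$ is a (complete) cotorsion pair precisely when $g^M$ can be chosen injective for every $M$; since $g^M$ is a left $\Gen T$-approximation, this is equivalent to every $M$ admitting an embedding into some object of $\Gen T$. Specializing to $M = \alg$ gives faithfulness, and conversely, if $\alg \hookrightarrow T^n$, then for any $M$ with projective cover $\alg^k \twoheadrightarrow M$ and kernel $K$, the composition $\alg^k \hookrightarrow T^{nk}$ induces an embedding $M \hookrightarrow T^{nk}/K \in \Gen T$. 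The main obstacle throughout is the two-term lifting step in the proof of $\add T = \C \cap \T$: the silting identity $\uu \cap \vv = \add\pp$ is visible only in $\butt(\alg)$, and one must carefully exploit minimality of $P_X$ to cancel the extraneous $Q[1]$ summands when descending back to $\mod\alg$.
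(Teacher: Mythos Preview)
Your proof is correct and follows the paper's approach for the main bijection: well-definedness from Theorems~\ref{adachi} and~\ref{lw-pair}, and injectivity/surjectivity via Theorem~\ref{adachi} together with Lemma~\ref{c-determined}. You also make explicit why $\T$ is functorially finite (covariant finiteness from the lw-cotorsion data, contravariant finiteness from the torsion radical), a point the paper simply asserts.

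Where you go beyond the paper is in two places. First, the paper's proof never verifies the inverse formula $\add T = \C\cap\T$ (and note that the displayed statement of Theorem~\ref{thm:2} contains a typo, writing $\T\cap\F$; the introduction has the correct $\C\cap\T$). This identity is standard from \cite{AIR}, since $\add T$ is exactly the class of Ext-projectives in $\Gen T$; the paper tacitly relies on this. Your two-term lifting argument via $\uu\cap\vv=\add\pp$ and the decomposition of lifts as $P_X\oplus Q[1]$ is correct and self-contained, though more elaborate than the \cite{AIR} shortcut. Second, for the tilting specialization the paper simply cites \cite[Theorem~2.29]{BBOS}, whereas you give a direct argument: injectivity of the left approximation $g^M$ for all $M$ is equivalent to every $M$ embedding in $\Gen T$, which in turn is equivalent to faithfulness of $T$, hence to $T$ being tilting. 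This is sound and has the advantage of keeping the argument internal to the paper rather than outsourcing it.
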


\begin{proof}
	It follows from Theorems \ref{adachi} and \ref{lw-pair} that  $({}^{\bot_1}\Gen T,\Gen T,T^{\bot})$ is a lw-cotorsion-torsion triple, and
	it follows from \ref{adachi} that the map is injective.
	
	Let $(\C, \T, \F)$ be a lw-cotorsion-torsion triple.
	Then in particular $\T$ is a functorially finite torsion class and hence $\T= \Gen T$ for a support $\tau$-tilting module $T$. Surjectivity follows using again Theorem \ref{adachi}, in combination with Lemma \ref{c-determined}.
	
	The last part of the statement now follows from \cite[Theorem 2.29]{BBOS}, which says
	that the pair $ (^{\bot_1}{\Gen T}, \Gen T, T^{\bot})$ is a cotorsion-torsion triple, if $T$ is a tilting module in $\mod\alg$.
\end{proof}

\begin{corollary}
	For a lw-cotorsion-torsion triple $(\C, \T, \F)$, we have an equivalence $$\C/(\C\cap\T)\simeq\F.$$
\end{corollary}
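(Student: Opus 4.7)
The plan is to transport the problem to the two-term category $\butt(\alg)$ and invoke Corollary~\ref{cor:4}. Given a lw-cotorsion-torsion triple $(\C, \T, \F)$, Theorem~\ref{thm:2} produces a basic support $\tau$-tilting module $T$ with $\T = \Gen T$, $\F = T^{\perp}$, $\C = {}^{\perp_1}\Gen T$ and $\add T = \C \cap \T$. Let $\pp\in\per\alg$ be the associated two-term silting complex, so $H^0(\pp)=T$. Combining Theorem~\ref{thm:BB}(a), Corollary~\ref{cor:sil-cotor} and Remark~\ref{rmk:sil-tor-cotor}, the complete cotorsion pair $(\uu(\pp),\vv(\pp))$ in $\butt(\alg)$ satisfies $\uu(\pp)\cap\vv(\pp)=\add\pp$ and $H^0(\vv(\pp))=\T$, while Proposition~\ref{prop:left-weak} together with Lemma~\ref{c-determined} yields $H^0(\uu(\pp))=\C$.

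Corollary~\ref{cor:4} applied to $(\uu(\pp),\vv(\pp))$ then produces
$$\F \;=\; H^0(\vv(\pp))^{\perp} \;\simeq\; \uu(\pp)/(\uu(\pp)\cap\vv(\pp)) \;=\; \uu(\pp)/\add\pp.$$
So it suffices to construct an equivalence $\C/\add T\simeq\uu(\pp)/\add\pp$. The natural candidate is the functor induced by $H^0\colon\uu(\pp)\to\C$: it is essentially surjective and maps $\add\pp$ into $\add T$, so it descends to an essentially surjective functor $\overline{H^0}\colon\uu(\pp)/\add\pp\to\C/\add T$. What remains is fully faithfulness.

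For this, the two tools are Lemma~\ref{lem:ideal} and Remark~\ref{rmk:fac}. Lemma~\ref{lem:ideal} gives an equivalence $H^0\colon\butt(\alg)/(\alg[1])\simeq\mod\alg$. Hence, given $f\colon U\to U'$ in $\uu(\pp)$ whose image $H^0(f)$ factors through an object of $\add T$ in $\mod\alg$, one may lift this factorization to $\butt(\alg)$ and write $f = g'\circ h' + k$, where $g'\circ h'$ factors through an object of $\add\pp$ and $k$ factors through an object of $\add\alg[1]$. By Remark~\ref{rmk:fac}, any morphism in $\uu(\pp)$ factoring through $\add\alg[1]$ already factors through $\uu(\pp)\cap\vv(\pp)=\add\pp$; hence $k$, and therefore $f$ itself, factors through $\add\pp$ inside $\uu(\pp)$. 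This gives faithfulness, and an analogous lift-and-correct argument—using Lemma~\ref{lem:ideal} to lift an arbitrary morphism $H^0(U)\to H^0(U')$ in $\C$ to $\uu(\pp)$ modulo $(\alg[1])$, then Remark~\ref{rmk:fac} to absorb the $\add\alg[1]$-ambiguity into $\add\pp$—yields fullness.

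The main obstacle is precisely this faithfulness step: one must promote a factorization of $H^0(f)$ through $\add T$, which a priori only exists up to morphisms factoring through $\add\alg[1]$, into a genuine factorization of $f$ through $\add\pp$ inside $\uu(\pp)$. Remark~\ref{rmk:fac} is exactly what closes this gap, ensuring that the $\add\alg[1]$-slack introduced by quotienting $\butt(\alg)$ by $(\alg[1])$ is automatically absorbed into the $\add\pp$-ideal when one restricts to $\uu(\pp)$.
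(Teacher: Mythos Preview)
Your proposal is correct and follows essentially the same route as the paper: both reduce to Corollary~\ref{cor:4} after identifying $(\C,\T,\F)$ with $(H^0(\uu(\pp)),H^0(\vv(\pp)),H^0(\vv(\pp))^\perp)$, and both use Lemma~\ref{lem:ideal} together with Remark~\ref{rmk:fac} to establish the remaining equivalence $\C/(\C\cap\T)\simeq\uu(\pp)/\add\pp$. The only difference is stylistic: the paper packages this last step as a chain of quotient-category identifications $(\uu/(\alg[1]))/((\uu\cap\vv)/(\alg[1]))\simeq\uu/(\uu\cap\vv)$, whereas you unpack the induced functor $\overline{H^0}$ and verify full faithfulness directly.
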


\begin{proof}
    By Theorem~\ref{thm:2}, $\T=\Gen T$ for some support $\tau$-tilting module in $\mod\alg$. By \cite[Theorem~3.2]{AIR}, there is a silting complex $\pp$ in $\butt(\alg)$ such that $H^0(\pp)=T$. By \cite[Proposition~2.4]{BZ}, we have $\T=\T(\pp)$ and $\F=\F(\pp)$. Let $(\uu,\vv)=(\uu(\pp),\vv(\pp))$ as in Theorem~\ref{thm:BB}. Then by Remark~\ref{rmk:sil-tor-cotor}, we have $\T=H^0(\vv)$ and $\F=H^0(\vv)^\perp$. So by Proposition~\ref{prop:left-weak} and Lemma~\ref{c-determined}, we have $\C=H^0(\uu)$. Note that by Corollary~\ref{cor:4}, there is an equivalence $\uu/(\uu\cap\vv)\simeq H^0(\vv)^\perp$. This gives the required equivalence, because we have the following equivalences
	$$\begin{array}{rcl}
	     \C/(\C\cap\T)& \simeq& H^0(\uu)/(H^0(\uu)\cap H^0(\vv))\\
	     & \simeq &(\uu/(\alg[1]))/((\uu/(\alg[1]))\cap (\vv/(\alg[1])))\\
	     &=& (\uu/(\alg[1]))/((\uu\cap\vv)/(\alg[1]))\\
	     &\simeq &\uu/(\uu\cap\vv),
	\end{array}$$
	where the second equivalence is due to Lemma~\ref{lem:ideal}, and the last equivalence is due to the inclusion $(\alg[1])\subset(\uu\cap\vv)$ of ideals of $\uu$, which follows from that any morphism from $\uu$ to $\add A[1]$ factors through $\uu\cap\vv$ (see Remark~\ref{rmk:fac}).
\end{proof}

As a direct consequence of the above, we get the following.

\begin{corollary}
	For a support $\tau$-tilting module $T$, we have $$^{\bot_1}{\Gen T}/ \add T \simeq T^{\perp}.$$
\end{corollary}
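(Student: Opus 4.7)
The plan is to derive this corollary as a direct specialization of the two preceding results. The first step will be to apply Theorem~\ref{thm:2} to the basic support $\tau$-tilting module $T$: this yields that $({}^{\bot_1}\Gen T, \Gen T, T^{\bot})$ is a lw-cotorsion-torsion triple and, via the inverse bijection in that theorem, gives the identification $\add T = {}^{\bot_1}\Gen T \cap \Gen T$.

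Next, I would feed this specific triple into the corollary immediately above, which states that $\C/(\C\cap\T)\simeq\F$ for any lw-cotorsion-torsion triple $(\C,\T,\F)$. Applied with $(\C,\T,\F)=({}^{\bot_1}\Gen T,\Gen T, T^{\bot})$, it produces the equivalence
\[
{}^{\bot_1}\Gen T \,/\, ({}^{\bot_1}\Gen T \cap \Gen T) \simeq T^{\bot},
\]
and substituting $\add T$ for the denominator yields exactly the desired conclusion.

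I do not expect any significant obstacle: the argument is essentially bookkeeping built on top of the two preceding statements, and the corollary really is advertised as a direct consequence. The only point I would want to verify carefully is the identification $\add T = {}^{\bot_1}\Gen T \cap \Gen T$. This is the familiar characterization from \cite{AIR} of support $\tau$-tilting modules as the additive generators of the Ext-projectives of the functorially finite torsion class $\Gen T$; concretely, $\add T \subseteq {}^{\bot_1}\Gen T$ uses $\Hom(\Gen T,\tau T)=0$ together with the Auslander-Reiten formula, while the reverse inclusion follows from the fact that the basic Ext-projective generator of $\Gen T$ has the expected number of summands. Since this identification is already packaged into the bijection of Theorem~\ref{thm:2}, the entire proof reduces to citing the two previous results and matching up the relevant subcategories.
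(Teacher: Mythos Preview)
Your proposal is correct and matches the paper's own approach: the paper simply states that this corollary is ``a direct consequence of the above,'' meaning exactly the combination of Theorem~\ref{thm:2} with the preceding corollary that $\C/(\C\cap\T)\simeq\F$. Your spelling out of the identification $\add T = {}^{\bot_1}\Gen T \cap \Gen T$ (and its independent justification via \cite{AIR}) is a useful addition, especially since the printed statement of Theorem~\ref{thm:2} contains the evident typo $\add T = \T\cap\F$ in place of $\add T = \C\cap\T$.
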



\begin{thebibliography}{99}
\newcommand{\au}[1]{\textrm{#1},}
\newcommand{\ti}[1]{\textit{#1},}
\newcommand{\jo}[1]{\textrm{#1}}
\newcommand{\vo}[1]{\textbf{#1}}
\newcommand{\yr}[1]{(#1)}
\newcommand{\page}[2]{#1–#2.}
\newcommand{\arxiv}[1]{\href{http://arxiv.org/abs/#1}{arXiv:#1}}
	
\bibitem[AIR]{AIR}
\au{T.~Adachi, O.~Iyama, \and I.~Reiten} \ti{$\tau$-tilting theory} \jo{Compos. Math.} \vo{150} \yr{2014} \page{415}{452}
	

\bibitem[AI]{AI}
T.~Aihara and O.~Iyama, \emph{Silting mutation in triangulated categories}, J. Lond. Math. Soc. (2) 85 (2012), no. 3, 633--668.
	
%	\bibitem[AR]{AR}
%	M.~Auslander and I.~Reiten, 
%	\emph{Applications of Contravariantly Finite subcategories}, 
%	Advances in Math. 86 (1991), 111--152.
	
	\bibitem[BBOS]{BBOS}
	U.~Bauer, M.~B.~Botnan, S.~Oppermann, J.~Steen,
	\emph{Cotorsion torsion triples and the representation theory of filtered hierarchical clustering},
	Adv. Math. 369 (2020)
	
%	\bibitem[BR]{BR}
%	A.~Beligiannis and I.~Reiten,
%	\emph{Homological and Homotopical Aspects of Torsion Theories},
%	
\bibitem[B]{B}
	M.~V.~Bondarko, 
	\emph{Weight structures vs. t-structures; weight filtrations, spectral sequences, and complexes (for motives and in general)}, J. K-Theory 6 (2010), 387--504.
%	

\bibitem[BB]{BB} 
 S.~Brenner, M.~C.~R.~Butler, \emph{Generalizations of the Bernstein-Gelfand-Ponomarev reflection
functors}, Representation theory, II (Proc. Second Internat. Conf., Carleton Univ., Ottawa, Ont., 1979),
pp. 103--169, Lecture Notes in Math., 832, Springer, Berlin-New York, 1980
%
\bibitem[BY]{BY}
	T.~Brüstle, D.~Yang,
	\emph{Ordered Exchange Graphs}, Advances in representation theory of algebras, 135--193, EMS Ser. Congr. Rep., Eur. Math. Soc., Zürich, 2013
%	
	\bibitem[BZ]{BZ}
	A.~B.~Buan, Y.~Zhou,
	\emph{A silting theorem}, J. Pure Appl. Algebra, 220 (2016), 2748--2770.
	
\bibitem[HRS]{HRS}	D.~Happel, I.~Reiten, S.~O.~Smal{\o}, \emph{Tilting in abelian categories and quasitilted algebras}, 
Mem. Amer. Math. Soc. 120 (1996), no. 575.
	
	
\bibitem[H]{H}
M.~Hovey, \emph{Cotorsion pairs, model category structures, and representation theory}, 
Math Z. 241 (2002), 553--592. 

	\bibitem[IJY]{IJY}
	O.~Iyama, P.~J{\o}gensen, D.~Yang,
	\emph{Intermediate co-$t$-structures, two-term silting objects, $\tau$-tilting modules, and torsion classes},
	Algebra Number Theory Volume 8, Number 10 (2014), 2413--2431.
	
	\bibitem[IYa]{IY}
	O.~Iyama, D.~Yang,
	\emph{Silting reduction and Calabi--Yau reduction of triangulated categories},
	Trans. Amer. Math. Soc. 370 (2018), 7861--7898.
	
	\bibitem[IYo]{IYo}
	O.~Iyama, Yoshino,
	\emph{Mutation in triangulated categories and rigid Cohen–Macaulay modules},
	 Invent. Math. 172 (2008), no. 1, 117--168.
	
	\bibitem[K]{K}
	B.~Keller,
	\emph{Deriving DG categories}, Annales scientifiques de l'École Normale Supérieure, Série 4, Tome 27 (1994) no. 1, pp. 63--102.
	
\bibitem[KV]{KV}
B.~Keller and D.~Vossieck, 
\emph{Aisles in derived categories}, 
Bull. 	Soc. Math. Belg. S\'{e}r. A 40 (1988), no. 2, 239--253.

%	
\bibitem[LN]{LN}
Y.~Liu and H.~Nakaoka,
\emph{Hearts of twin cotorsion pairs on extriangulated categories},
Journal of Algebra 528 (2019) 96--149.
%	
\bibitem[MSSS]{MSSS}
O. Mendoza Hern{\'{a}}ndez, E. C. S\'{a}enz Valadez, V. Santiago Vargas, M. J. Souto Salorio, 
\emph{Auslander-Buchweitz context and co-t-structures}, 
Appl. Categ. Structures 21 (2013), 417--440.
%
\bibitem[N]{N}
H.~Nakaoka,
\emph{General heart construction on a triangulated category (I): unifying tstructures and cluster tilting subcategories}, Appl. Categ. Structures, 19 (2011) no.6, 879--899.
%
	\bibitem[NP]{NP}
	H.~Nakaoka, Y.~Palu, 
	\emph{Extriangulated categories, Hovey twin cotorsion pairs and model structures}, Cahiers de topologie et g\'{e}om\'{e}trie diff\'{e}rentielle cat\'{e}goriques, vol.LX (2) (pp.117--193), 2019.
	
	\bibitem[P]{P}
	D.~Pauksztello, 
	\emph{Compact corigid objects in triangulated categories and co-$t$-structures}, 
	Cent. Eur. J. Math. 6 (2008), 25--42.
	
\bibitem[PZ]{PZ}
\au{D.~Pauksztello, \and A.~Zvonareva} \ti{Co-t-structures, cotilting and cotorsion pairs} \arxiv{2007.06536}.

\bibitem[S]{S}
L.~Salce, \emph{Cotorsion theories for abelian groups}, Symposia Mathematica, Vol. XXIII
(Conf. Abelian Groups and their Relationship to the Theory of Modules, INDAM, Rome, 1977), Academic Press, London, 1979, pp. 11--32.
\end{thebibliography}
\end{document}